\documentclass[arxiv]{agn_article}

\usepackage[utf8]{inputenc}

\usepackage{csquotes}

\usepackage{amssymb,amsmath,amsthm,agn_bib}

\usepackage{agn_macros,agn_authors}
\usepackage{mathrsfs}
\usepackage{graphicx}
\usepackage{multicol}
\usepackage[inline]{enumitem}
\usepackage{array,multirow}
\usepackage[normalem]{ulem}
\usepackage{arydshln}
\usepackage[super]{nth}


\newcounter{alphasect}
\def\alphainsection{0}

\let\oldsection=\section
\def\section{%
  \ifnum\alphainsection=1%
    \addtocounter{alphasect}{1}
  \fi%
\oldsection}%
\renewcommand\thesection{%
  \ifnum\alphainsection=1%
    \Alph{alphasect}%
  \else%
    \arabic{section}%
  \fi%
}%

\newenvironment{alphasection}{%
  \ifnum\alphainsection=1%
    \errhelp={Let other blocks end at the beginning of the next block.}
    \errmessage{Nested Alpha section not allowed}
  \fi%
  \setcounter{alphasect}{0}
  \def\alphainsection{1}
}{%
  \setcounter{alphasect}{0}
  \def\alphainsection{0}
}%

\addbibresource{bib_agn.bib}
\addbibresource{bib_main.bib}
\AtBeginBibliography{\footnotesize}


\newtheorem{theorem}{Theorem}[section]
\newtheorem{proposition}[theorem]{Proposition}
\newtheorem{lemma}[theorem]{Lemma}
\newtheorem{observation}[theorem]{Observation}
\newtheorem{corollary}[theorem]{Corollary}

\theoremstyle{definition}

\newtheorem{remark}[theorem]{Remark}

\usepackage{xcolor,hyperref}
\definecolor{darkblue}{rgb}{0,0,0.6}
\definecolor{fg}{RGB}{34,139,34}  
\newcommand{\corrected}[1]{#1}

\hypersetup{
    colorlinks=true,       
    linkcolor=darkblue,          
    citecolor=darkblue,        
    filecolor=darkblue,      
    urlcolor=darkblue           
}

\usepackage[colorinlistoftodos,prependcaption,textsize=tiny]{todonotes}
\newcommand{\skalarProd}[2]{\big\langle#1,#2\big\rangle}

\newcommand{\inc}{\boldsymbol{\operatorname{inc}}\,}
\renewcommand{\D}{\operatorname{D}\hspace{-1pt}}
\newcommand{\curl}{\operatorname{curl}}

\newcommand{\Ce}{\mathbb{C}_{\mathrm{e}}}
\newcommand{\Cc}{\mathbb{C}_{\mathrm{c}}}
\newcommand{\Cmicro}{\mathbb{C}_{\mathrm{micro}}}

\newcommand{\Lc}{L_{\mathrm{c}}}

\newcommand{\Anti}{\operatorname{Anti}}

\newcommand{\T}{\mathbb{T}}
\newcommand{\komplexI}{\mathrm{i}}

\newlist{thmenum}{enumerate}{1}
\setlist[thmenum]{label=\upshape(\alph*)}

\defineaffiliation{hcm}{%
	 Hausdorff Center for Mathematics and Institute for Applied Mathematics, Universit\"at Bonn, Endenicher Allee 60, 53115 Bonn, Germany
}
\defineauthor{mueller}{Stefan~M\"uller}{hcm}{stefan.mueller@hcm.uni-bonn.de}

\usepackage{tikz}

\begin{document}
\begin{tikzpicture}[remember picture, overlay]
 \node [xshift=-1cm,yshift=15cm,rotate=-90] at (current page.south east)
 {Calculus of Variations and Partial Differential Equations (2021), doi: \href{https://doi.org/10.1007/s00526-021-02000-x}{10.1007/s00526-021-02000-x}.
 };
\end{tikzpicture}
\numberwithin{equation}{section}

 \title{Korn inequalities for incompatible tensor fields in three space dimensions with conformally invariant dislocation energy}
\knownauthors[lewintan]{lewintan,mueller,neff}

\maketitle

\begin{abstract} Let $\Omega \subset \R^3$ be an open and bounded set with Lipschitz boundary and outward unit normal $\nu$. For $1<p<\infty$ we establish an improved version of the generalized $L^p$-Korn inequality for incompatible tensor fields $P$ in the new Banach space 
\begin{align*}
  W^{1,\,p,\,r}_0&(\dev\sym\Curl; \Omega,\R^{3\times3}) \\
 = & \{ P \in L^p(\Omega; \R^{3 \times 3}) \mid \dev \sym \Curl P \in L^r(\Omega; \R^{3 \times 3}),\   \dev \sym (P \times \nu) = 0 \text{ on $\partial \Omega$}\}
 \end{align*}
where 
 $$ r \in [1, \infty), \qquad \frac1r \le \frac1p + \frac13, \qquad r >1 \quad \text{if $p = \frac32$.}$$
 
 Specifically, there exists a constant $c=c(p,\Omega,r)>0$ such that the inequality
 \[
\norm{ P }_{L^p(\Omega,\R^{3\times3})}\leq c\,\left(\norm{\sym P }_{L^p(\Omega,\R^{3\times3})} + \norm{ \dev\sym \Curl P }_{L^{r}(\Omega,\R^{3\times3})}\right)
\]
holds for all tensor fields $P\in  W^{1,\,p, \, r}_0(\dev\sym\Curl; \Omega,\R^{3\times3})$.
Here, $\dev X \coloneqq X -\frac13 \tr(X)\,\id$ denotes the deviatoric (trace-free) part of a $3 \times 3$ matrix $X$ and the boundary condition is
understood in a suitable weak sense. This estimate also holds true if the boundary condition is only satisfied on  a relatively open, non-empty subset $\Gamma \subset \partial \Omega$.
If no boundary conditions are imposed then the estimate holds after taking the quotient with the finite-dimensional space $K_{S,dSC}$ which is determined by the conditions $\sym P =0$ and $\dev \sym \Curl P = 0$. In that case one can replace $\norm{ \dev\sym \Curl P }_{L^r(\Omega,\R^{3\times3})} $ by $\norm{ \dev\sym \Curl P }_{W^{-1,p}(\Omega,\R^{3\times3})}$. The new $L^p$-estimate implies  a classical Korn's inequality with weak boundary conditions by choosing $P=\D u$ and 
a deviatoric-symmetric generalization of Poincar\'{e}'s inequality by choosing $P=A\in\so(3)$.

The proof relies on a representation of the third derivatives $\D^3 P$ in terms of $\D^2 \dev \sym \Curl P$ combined with the Lions lemma and the Ne\v{c}as estimate.

We also discuss applications of the new inequality to the relaxed micromorphic model, to Cosserat models with the weakest form of the curvature 
energy,  to gradient plasticity with plastic spin and to incompatible linear elasticity.
\end{abstract}

\par\noindent\textbf{AMS 2020 subject classification:} Primary: 35A23; Secondary: 35B45, 35Q74, 46E35.\par

\keywords{Korn's inequality, Poincar\'{e}'s inequality, Lions lemma, Ne\v{c}as estimate, incompatibility, $\Curl$-spaces, gradient plasticity, dislocation density, relaxed micromorphic model, Cosserat elasticity, Kr\"{o}ner's incompatibility tensor, trace-free Korn's inequality, conformal Killing vectors.}

\section{Introduction}

\subsection{Overview}
Korn's second inequality provides an $L^p$-estimate  of a gradient vector field
(modulo a constant) in terms of the symmetric part of the derivative. This can be generalized to general fields $P$ if one adds a term  in $\Curl P$
on the right hand side \cite{agn_lewintan2019KornLp,agn_lewintan2019KornLpN,agn_neff2015poincare}. For recent refined estimates which involve only the deviatoric part of $\sym P$ and $\Curl P$, see \cite{agn_lewintan2020KornLp_tracefree,agn_bauer2013dev,agn_lewintan2020KornLpN_tracefree}.

Here, we show that $P$  can be estimated  in dimension $n=3$ in terms of $\sym P$ and
 $\sym \Curl P$ or even $\sym P$ and $\dev \sym \Curl P$.  The difference is that we  need to subtract not only constants but also certain affine  or quadratic skew-symmetric fields  in  the kernel of  the operators $\sym \Curl$ and $\dev \sym \Curl$, respectively.

\corrected{To set the stage  we recall the notation for the relevant Lie groups used in this paper
and their Lie algebras and indicate  how our new inequalities relate to  (infinitesimal) conformal invariance.
We denote the space of $(n \times n)$-matrices by $\R^{n \times n}$ and we denote  the groups of 
proper orthogonal matrices, and matrices with determinant $1$ by
\begin{subequations}
\begin{align}
 \SO(n) &= \{ Q \in \R^{n\times n} \mid Q^T Q = \id\},&  \SL(n) &= \{ B \in \R^{n \times n} \mid \det B = 1\}.
\intertext{The corresponding Lie algebras  of skew-symmetric and trace-free matrices are denoted by
}
\so(n) &= \{  A \in \R^{n\times n} \mid A^T = -A\},& \sl(n) &= \{D \in \R^{n \times n} \mid \tr D = 0\}.
\end{align}
\end{subequations}

Let $\Omega \subset \R^3$ be open, bounded and simply connected. A $C^1$-map $\varphi :\Omega \to \R^3$ is conformal if its differential preserves the scalar product up to dilations, i.e., if 
for all $x$ there exist $\lambda(x) \ge 0$ and $Q(x) \in \SO(3)$ such that $\D \varphi(x) = \lambda(x) Q(x)$. 
It is well-known that conformal maps are smooth and the non-constant conformal maps form a finite-dimensional
manifold. The vector fields in the tangent space of the identity map are called \emph{conformal Killing fields} (or \emph{infinitesimally conformal maps})
and are characterized by the condition
\begin{equation}\label{eq:confcond}
 \dev \sym \D u  = 0.
\end{equation}
In fact the solutions $u$ of \eqref{eq:confcond} are certain quadratic polynomials, see \eqref{eq:infinitesimalconfis} below for an explicit formula.

In the Cosserat theory, the curvature expression
\begin{subequations}
\begin{equation}
 \norm{\dev\sym\Curl A}^2 \quad \text{for } A:\Omega\to\so(3)
\end{equation}
can be expressed equivalently as
\begin{equation}\label{eq:explanation}
 \norm{\dev\sym\D \axl(A)}^2.
\end{equation}
and the latter expression has been termed ``conformal curvature'', consistent with \eqref{eq:confcond} for $u=\axl(A)$. Therefore, we call the generalized curvature expression 
\begin{equation}
 \norm{\dev\sym\Curl P}^2
\end{equation}
\emph{conformal dislocation energy}. Upon restricting $P\in\so(3)$ we recover \eqref{eq:explanation}, see also \eqref{eq:conform}.
\end{subequations}
}

\subsection{The classical Korn's inequalities}
First inequalities of this type were identified by Arthur Korn more than hundred years ago, cf.~\cite{Korn1,Korn2,Korn3}, where they were derived for applications in linear elasticity. It is worth mentioning that after his graduation in 1890 Korn studied in Paris under the supervision of Henri Poincar\'{e}. For Korn's biography including his pioneering work in telephotography we refer to \cite{Litten,Korn-bio} but also \cite[p.~182f]{Korn-bio-DMV}.

We start by summarizing the inequalities which bear Korn's name. In the following, let $n\ge2$ and $\Omega\subset\R^n$ be a bounded Lipschitz domain. Korn's first inequality (in $L^p$) with vanishing boundary values\footnote{In fact, the estimate is also true for functions with vanishing boundary values on a relatively open (non-empty) subset of the boundary.} reads
\begin{align}
\label{eq:Korn1}
\norm{\D u}_{L^p(\Omega,\R^{n\times n})} &\leq c\, \norm{\sym \D u}_{L^{p}(\Omega,\R^{n\times n})} \qquad &&\forall\, u\in W^{1,\,p}_{0}(\Omega,\R^n).
\intertext{
It can be deduced from Korn's second inequality (in $L^p$), which does not require boundary conditions:
}
\label{eq:Korn2}
\norm{u}_{W^{1,\,p}(\Omega,\R^n)}&\leq c\, \left(\norm{u}_{L^p(\Omega,\R^n)} + \norm{\sym \D u}_{L^{p}(\Omega,\R^{n\times n})} \right)  &&\forall\, u\in W^{1,\,p}(\Omega,\R^n).
\intertext{
From the latter inequality also the following version follows}
\label{eq:KornQuant}
\inf_{A\in\so(n)}\norm{\D u- A}_{L^p(\Omega,\R^{n\times n})}&\leq c\, \norm{\sym \D u}_{L^{p}(\Omega,\R^{n\times n})}  &&\forall\, u\in W^{1,\,p}(\Omega,\R^n).
\intertext{For $n\geq 3$ these inequalities can be improved to inequalities which only require the trace-free part of $\sym \D u$ on the right hand side. One has}
\label{eq:Korn1-dev}
 \norm{\D u}_{L^p(\Omega,\R^{n\times n})}&\leq c\, \norm{\dev_n\sym \D u}_{L^{p}(\Omega,\R^{n\times n})} &&\forall\, u\in W^{1,\,p}_{0}(\Omega,\R^n),
\intertext{where $\dev_n X\coloneqq X -\frac1n\tr(X)\cdot \id$ denotes the deviatoric (trace-free) part of the square matrix $X\in\R^{n\times n}$. Moreover,}
\label{eq:Korn2-dev}
 \norm{u}_{W^{1,\,p}(\Omega,\R^n)}&\leq c\, \left(\norm{u}_{L^p(\Omega,\R^n)} + \norm{\dev_n\sym \D u}_{L^{p}(\Omega,\R^{n\times n})} \right) &&\forall\, u\in W^{1,\,p}(\Omega,\R^n),
\intertext{as well as}
\label{eq:KornQuant-dev}
\norm{u- \Pi u}_{W^{1,\,p}(\Omega,\R^{n\times n})}&\leq c\, \norm{\dev_n\sym \D u}_{L^{p}(\Omega,\R^{n\times n})}  &&\forall\, u\in W^{1,\,p}(\Omega,\R^n)
\intertext{where $\Pi$ is an arbitrary projection from $W^{1,\,p}(\Omega,\R^n)$ onto the space of \textit{conformal Killing vectors}  (or infinitesimal conformal mappings), i.e.,~the finite-dimensional kernel of $\dev_n\sym\D$, which is given by quadratic polynomials of the form
\begin{equation}\label{eq:infinitesimalconfis}
\varphi_C(x)=\skalarProd{a}{x}\,x-\frac12a\norm{x}^2+A\,x +\beta\,x +b, \qquad \text{with $A\in\so(n)$, $a,b\in\R^n$ and $\beta\in\R$,}
\end{equation}
cf. \cite{Reshetnyak1970,agn_neff2009subgrid,agn_jeong2008existence,Dain2006tracefree,Reshetnyak1994, Schirra2012tracefreenD}. The situation is quite different in the planar case $n=2$, since the condition $\dev_2\sym\D u\equiv0$ becomes the system of Cauchy-Riemann equations and the corresponding kernel is infinite-dimensional, so that an adequate quantitative version of the trace-free classical Korn's inequality does not hold true. However, in \cite{FuchsSchirra2009tracefree2D} it is proved that}
 \norm{\D u}_{L^p(\Omega,\R^{2\times2})}&\le c\,\norm{\dev_2\sym\D u}_{L^p(\Omega,\R^{2\times2})}  &&\forall \, u\in W^{1,\,p}_0(\Omega,\R^2),
\end{align}
but this result ceases to be valid if the homogeneous Dirichlet conditions are prescribed only on a part of the boundary, cf.~the counterexample in \cite[sec. 6.6]{agn_bauer2013dev}.

For the limiting cases $p=1$ and $p=\infty$ Korn-type inequalities fail, since from the counterexamples traced back in \cite{CFM2005counterex,dlM1964counterex,Ornstein1962,Mityagin1958} it follows that  $\int_\Omega\norm{\sym\D u}\,\intd{x}$ does not dominate each of the quantities $\int_\Omega \abs{\partial_i u_j}\,\intd{x}$ for any vector field $u\in W^{1,\,1}_0(\Omega,\R^n)$. Nevertheless, Poincar\'{e}-type inequalities estimating certain integral norms of the deformation $u$ in terms of the total variation of the symmetric strain tensor $\sym\D u$ are still true. For Poincar\'{e}-type inequalities for functions of bounded deformation involving only the deviatoric part of the symmetrized gradient we refer to \cite{FuchsRepin2010Poincaretracefree}.

These Korn inequalities are crucial for a priori estimates in linear elasticity and fluid mechanics, so that they are cornerstones for well-posedness results in linear elasticity ($L^2$-setting) and the Stokes-problem ($L^p$-setting), cf.~\cite{horgan1995} and \cite{Ciarlet2013FAbook} for a modern elaboration, whereas the trace-free equivalents found applications in micropolar Cosserat-type models \cite{agn_jeong2009numerical,agn_neff2009subgrid,agn_jeong2008existence,FuchsSchirra2009tracefree2D} and general relativity \cite{Dain2006tracefree}.

The Korn inequalities generalize to many different settings, including the geometrically nonlinear counterpart \cite{friesecke2002rigidity,LM2016optimalconstants}, mixed growth conditions \cite{CDM2014mixedgrowth}, incompatible fields (also with dislocations) \cite{MSZ2014incompatible,agn_neff2015poincare,agn_bauer2013dev,agn_lewintan2019KornLp,agn_lewintan2019KornLpN,agn_lewintan2020KornLp_tracefree,Garroni10,conti2020sharp,gmeineder2020kornmaxwellsobolev} and trace-free infinitesimal strain measures \cite{Dain2006tracefree, agn_jeong2008existence,Reshetnyak1970,Reshetnyak1994,FuchsSchirra2009tracefree2D,Schirra2012tracefreenD,agn_bauer2013dev,agn_lewintan2020KornLp_tracefree,agn_lewintan2020KornLpN_tracefree}. For trace-free Korn's inequalities in pseudo-Euclidean space see \cite{Wang2008tracefreeinpseudo} and for trace-free Korn inequalities on manifolds see \cite{Dain2006tracefree, holst2007rough}. It is also possible to consider tangential boundary conditions, cf.~\cite{Villani-tangentBCs,Ryzhak-tangentBCs,Bauer-tangentBCs,Bauer-tangentBCs2,domnguez2019korns}. Other generalizations are applicable to Orlicz-spaces \cite{Fuchs2010,BD2012Orlicz,BCD2017Orlicz,Cianchi2014Orlicz,Fuchs2010tracefreeKorn} and SBD functions with small jump sets \cite{Friedrich2017SBD,CCF2016SBD,Friedrich2018SBD}, thin domains \cite{LM2011thindomians,GH2018thindomains,Harutyunyan2017thindomains,miura2020navierstokes}  as well as the case of non-constant coefficients \cite{agn_neff2002korn,agn_lankeit2013uniqueness,agn_neff2014counterexamples,Pompe2003Korn}. Moreover Korn-type inequalities are valid on H\"older and John domains, see \cite{JK2017John,DM2004Jonesdomains,DRS2010Johndomains,ADM2006Johndomains,LopezGarcia2018tracefreeKorn,DingBo2020tracefreeKorn} and also the recent monograph \cite{AcostaDuran2017book} which
 relates those Korn inequalities to the existence of a right inverse of the divergence operator, to the Stokes equations and other inequalities. Piecewise Korn-type inequalities subordinate to a FEM-mesh and involving jumps across element boundaries have also been investigated, see e.g.~\cite{Brenner2004Korn,agn_lew2004optimal}. In the recent paper \cite{spector2020bmo} the authors established a Korn inequality involving the BMO-seminorms which is valid on all bounded domains and with a constant depending only on the dimension. Here we focus on inequalities for general tensor fields.

\subsection{Korn-type inequalities for incompatible tensor fields}
Classical Korn's inequalities require compatibility, i.e.,~a gradient $\D u$ (the Jacobian matrix). Generalizations of such estimates to general fields $P$ then need a control of the distance of $P$ to a gradient by adding the incompatibility measure (the dislocation density tensor) $\Curl P$. The matrix $\Curl$ operation is to be understood as row-wise application of the classical $\curl$ to vectors. Even though
the usual $\Curl$ operation on $\R^3$ has a natural extension to all dimensions, the case $n=3$  deserves our special attention, not only from the viewpoint of modeling but also since the matrix $\Curl$ then returns a square matrix in three dimensions. As direct generalization of Korn's first inequality \eqref{eq:Korn1} we have for all $P\in  W^{1,\,p}_0(\Curl; \Omega,\R^{3\times3})$
\begin{align}\label{eq:Korn_Lp}
 \norm{ P }_{L^p(\Omega,\R^{3\times3})}&\leq c\,\left(\norm{ \sym P }_{L^p(\Omega,\R^{3\times3})} + \norm{ \Curl P }_{L^p(\Omega,\R^{3\times3})}\right),
\intertext{cf. \cite{agn_neff2015poincare} for $p=2$ and \cite{agn_lewintan2019KornLp} for all $p>1$. Furthermore, the version \eqref{eq:KornQuant} generalizes  to
}
 \inf_{\widetilde{A}\in\so(3)}\norm{P-\widetilde{A}}_{L^p(\Omega,\R^{3\times3})}&\leq c\,\left(\norm{ \sym P }_{L^p(\Omega,\R^{3\times3})}+ \norm{ \Curl P }_{L^p(\Omega,\R^{3\times3})}\right)\label{eq:Korn_Lp_w}
 \intertext{
 for all $P\in  W^{1,\,p}(\Curl; \Omega,\R^{3\times3})$, cf. \cite{agn_lewintan2019KornLp}. These estimates also hold true in all dimensions $n\geq2$ with an adequate understanding of the matrix $\Curl$,  \cite{agn_lewintan2019KornLpN}.  However, in two dimensions even stronger estimates hold true, cf. \cite{Garroni10} and its nonlinear counterpart in \cite{MSZ2014incompatible}, so especially, for fields $P\in L^1(\Omega,\R^{2\times2})$ with $\Curl P\in L^1(\Omega,\R^2)$ it follows that  $P\in L^2(\Omega,\R^{2\times2})$ and}
\norm{P}_{L^2(\Omega,\R^{2\times2})} &\leq c\,\left(\norm{\sym P}_{L^2(\Omega,\R^{2\times2})} + \norm{\Curl P}_{L^1(\Omega,\R^2)}\right)\label{eq:garroni}
\intertext{
under the normalization condition $\int_\Omega \skew P\, \intd{x}= 0$, cf. \cite{Garroni10}. However, this is essentially a result for the divergence, since $\Div$ is a rotated $\Curl$ in two dimensions.\footnotemark\ Indeed, the authors of \cite{Garroni10} make use of the fact that a vector field $u\in L^1(\Omega,\R^2)$ satisfying $\div u\in H^{-2}(\Omega)$ belongs to $H^{-1}(\Omega)$ with}
 \norm{u}_{H^{-1}(\Omega,\R^2)}&\le c\,(\norm{u}_{L^1(\Omega,\R^2)}+\norm{\div u}_{H^{-2}(\Omega)})
 \end{align}
which follows from \cite{BrezisSchaftingen2007}. For the geometrically nonlinear counterpart of \eqref{eq:garroni} in a mixed-growth setting in two dimensions we refer the reader to \cite{Ginster2019gradientplast} and higher-dimensional analogues can be found in \cite{lauteri2017geometric,conti2020sharp}.
\footnotetext{The Babu\v{s}ka-Aziz theorem implies that over a planar Lipschitz domain $\Omega\subset\R^2$ it holds
\[
\norm{\D u}_{L^2(\Omega,\R^{2\times2})} \le c\,\norm{\div u}_{L^2(\Omega,\R)}
\]
for all $u\in H^1_0(\Omega,\R^2)$ such that $\int_\Omega\div u\,\intd{x}=0$, cf.~\cite[Section 6]{horgan1995}.
}

 Improvements of the Korn inequalities for incompatible tensor fields \eqref{eq:Korn_Lp} and  \eqref{eq:Korn_Lp_w} towards the trace-free cases are also valid. For all $P\in  W^{1,\,p}_0(\Curl; \Omega,\R^{3\times3})$, where
 \begin{align}
 W^{1,\,p}_0(\Curl; \Omega,\R^{3\times3})
 &\coloneqq \{P\in L^p(\Omega,\R^{3\times3}) \mid \Curl P \in L^p(\Omega,\R^{3\times3}),\ P \times \nu = 0 \text{ on } \partial \Omega\}
\intertext{ one has}
 \norm{ P }_{L^p(\Omega,\R^{3\times3})}&\leq c\,\left(\norm{\dev \sym P }_{L^p(\Omega,\R^{3\times3})} + \norm{ \dev\Curl P }_{L^p(\Omega,\R^{3\times3})}\right),\label{eq:Korn_Lp-dev}
 \intertext{cf. \cite{agn_bauer2013dev}  for $p=2$ and \cite{agn_lewintan2020KornLp_tracefree} for all $p>1$. Moreover,  we have}
 \inf_{T\in K_{dS,dC}}\norm{P-T}_{L^p(\Omega,\R^{3\times3})}&\leq c\,\left(\norm{\dev \sym P }_{L^p(\Omega,\R^{3\times3})}+ \norm{\dev\Curl P }_{L^p(\Omega,\R^{3\times3})}\right)\label{eq:Korn_Lp_dev_quant_all}
 \intertext{for all $P\in  W^{1,\,p}(\Curl; \Omega,\R^{3\times3})$, cf. \cite{agn_lewintan2020KornLp_tracefree}, where the kernel on the right hand side is given by}
  K_{dS,dC} = \{T:\Omega\to\R^{3\times3} \mid   T(x)&=\Anti\big(\widetilde{A}\,x+\beta\, x+b \big)+\big(\skalarProd{\axl\widetilde{A}}{x}+\gamma \big)\,\id, \notag \\
  &\hspace{10em} \widetilde{A}\in\so(3), b\in\R^3, \beta,\gamma\in\R\}\,,\label{eq:kernel_dSdC}
\end{align}
where ~ $\Anti: \R^3\to\so(3)$  is the canonical identification (consistent with the vector product) of $\R^3$ and the vectorspace of skew-symmetric matrices  $\so(3)$ and ~$\axl:\so(3)\to\R^3$ is its inverse.
The appearance of the $\dev\Curl$ operator on the right hand side would suggest to extend the Banach space $W^{1,\,p}(\Curl; \Omega,\R^{3\times3})$ to $p$-integrable tensor fields $P$ with $p$-integrable $\dev\Curl P$, but this would not be a new space. Indeed, in \cite{agn_lewintan2020KornLp_tracefree} the authors showed that for all $P\in\mathscr{D}'(\Omega,\R^{3\times3})$ and all $m\in\Z$ one has
  \begin{equation}\label{eq:reicht_ohne_dev}
     \Curl P\in W^{m,\,p}(\Omega,\R^{3\times3})\quad \Leftrightarrow\quad \dev\Curl P\in W^{m,\,p}(\Omega,\R^{3\times3}).
  \end{equation}
Note, that the estimates \eqref{eq:Korn_Lp-dev} and \eqref{eq:Korn_Lp_dev_quant_all} are strictly restricted to the case of three dimensions since the deviatoric operator acts on square matrices and only in the three-dimensional setting the matrix $\Curl$ operator returns again a square matrix. On the other hand, the corresponding weaker estimates in terms of $\norm{\dev_n\sym  P}_{L^p} +\norm{\Curl P}_{L^p}$ hold true in all dimensions $n\geq 3$, cf. \cite{agn_bauer2013dev} for  $p=2$ and \cite{agn_lewintan2020KornLpN_tracefree} for all $p>1$.

For compatible $P=\D u$ we get back from \eqref{eq:Korn_Lp}, \eqref{eq:Korn_Lp_w}, \eqref{eq:Korn_Lp-dev} and \eqref{eq:Korn_Lp_dev_quant_all} the corresponding classical Korn inequalities. Recently, Gmeineder and Spector \cite{gmeineder2020kornmaxwellsobolev} extended inequality \eqref{eq:Korn_Lp} to the case where $\sym P$ is generalized to any linear operator $A(P)$ such that $A(\D u)$ is a first order elliptic operator, thus including also one result of \cite{agn_lewintan2020KornLp_tracefree} with $\dev\sym P$.

The objective of the present paper is to further improve on estimate \eqref{eq:Korn_Lp} by showing that it already suffices to consider the symmetric or even the trace-free symmetric part of the $\Curl$. More precisely, for all $P\in  W^{1,\,p, \, r}_0(\dev\sym\Curl; \Omega,\R^{3\times3})$ where
\begin{align*}
  W^{1,\,p, \, r}_0&(\dev\sym\Curl; \Omega,\R^{3\times3}) \corrected{\coloneqq} \\
  & \{ P \in L^p(\Omega; \R^{3 \times 3}) \mid \dev \sym \Curl P \in L^r(\Omega; \R^{3 \times 3}),\   \dev \sym (P \times \nu) = 0 \text{ on $\partial \Omega$}\}
 \end{align*}
and
 $$ r \in [1, \infty), \qquad \frac1r \le \frac1p + \frac13, \qquad r >1 \quad \text{if $p = \frac32$}$$
 there exists a constant $c=c(p,\Omega,r)>0$ such that one has
 \begin{align}\label{eq:Korn_Lp-devsymCurl}
\norm{ P }_{L^p(\Omega,\R^{3\times3})}&\leq c\,\left(\norm{\sym P }_{L^p(\Omega,\R^{3\times3})} + \norm{ \dev\sym \Curl P }_{L^{r}(\Omega,\R^{3\times3})}\right)\,.
\intertext{If no boundary conditions are imposed then we show}
 \inf_{T\in K_{S,dSC}}\norm{P-T}_{L^p(\Omega,\R^{3\times3})}&\leq c\,\left(\norm{\sym P }_{L^p(\Omega,\R^{3\times3})}+ \norm{\dev\sym\Curl P }_{L^r(\Omega,\R^{3\times3})}\right)\,,\label{eq:Korn_Lp_quant-devsymCurl}
 \intertext{where the kernel is given by}
  K_{S,dSC}\corrected{=}\{T:\Omega\to\R^{3\times3} \mid   T(x)&=\Anti\big(\widetilde{A}\,x+\beta\, x+b +\skalarProd{d}{x}\,x -\frac12d\norm{x}^2\big), \notag \\ &\hspace{10em} \widetilde{A}\in\so(3), b,d\in\R^3, \beta\in\R\} \label{eq:kernel_SdSC-intro}\,.
 \end{align}
 \begin{remark}
 The right-hand side of \eqref{eq:Korn_Lp-devsymCurl} provides a norm on smooth, compactly supported functions $P\in C^\infty_0(\Omega,\R^{3\times 3})$. Indeed, $\sym P\equiv 0$ implies $P=A\in C^\infty_0(\Omega,\so(3))$, so that by Nye's formula \eqref{eq:Nye}$_1$ the condition ~ $\dev\sym \Curl A\equiv0$ ~ reads already ~ $\dev\sym\D a \equiv0$ ~ with $a\coloneqq\axl A\in C^\infty_0(\Omega,\R^3)$, where $\axl:\so(3)\to\R^3$ associates to a skew-symmetric  matrix $A\in\so(3)$ the vector ~ $\axl A\coloneqq (-A_{23},A_{13},-A_{12})^T$. The trace-free Korn's inequality \eqref{eq:Korn1-dev} then gives ~ $\D a \equiv 0$.~ Hence, $a=\axl A $ is a constant vector field, $P=A$ is a constant skew-symmetric matrix field, and with the boundary condition we obtain $P\equiv0$.
\end{remark}
\begin{remark}\label{Rem:keineNorm}
On the other hand, there are no such estimates in terms of
$$\norm{\dev P}+\norm{\sym\Curl P}, \quad \norm{\dev\sym P}+\norm{\sym\Curl P}\quad \text{or}\quad \norm{\dev\sym P}+\norm{\dev\sym\Curl P}$$
due to the example ~ $P=\zeta\cdot\id$ ~ for which ~ $\Curl P =-\Anti(\nabla \zeta)$, ~ so that the corresponding right-hand sides would vanish, since here we have ~ $\dev P=\dev\sym P = 0$ ~ and also ~ $\sym \Curl P = \dev\sym\Curl P =0$.
\end{remark}

\subsection{Proof ideas for Korn inequalities}
There exist many different proofs of the classical Korn's inequalities, cf.~the discussions in \cite{Ciarlet2010,agn_neff2015poincare,ACM2015,KO88,Nitsche81,Gobert1962,HlavacekNecas-I,HlavacekNecas-II,Friedrichs1947,Fichera1950,Fichera1972,Eidus1951,BernsteinToupin1960,PayneWeinberger1961,Kato1979,Tiero1999,Campanato1971,Tiero2001equivanlenc_of_ineq,Ting1972Korn,Romano2000} as well as \cite[Sect. 6.15]{Ciarlet2013FAbook} and the references contained therein. A rather concise and elegant argument, see \cite{Geymonat86, DuvautLions72,,Villani-tangentBCs,Girault1986FEM} and also advocated by P. G. Ciarlet and his coworkers \cite{Ciarlet2010, Ciarlet2013FAbook, Ciarlet2005korn,Ciarlet2005intro, CMM2018} uses the Lions lemma resp.~Ne\v{c}as estimate, the compact embedding $\corrected{W^{1,\,p}\subset\!\subset L^p}$ and the well-known representation of the second distributional derivatives of the displacement $u$ by a linear combination of the first derivatives of the symmetrized gradient $\D u$, namely
\begin{subequations}\label{eq:sec_der_id}
\begin{align}
 \partial_i\partial_j u_k = \partial_j(\sym \D u)_{ik}+ \partial_i(\sym \D  u)_{jk}-\partial_k(\sym \D u)_{ij},
\shortintertext{i.e.}
\D^2 u = L(\D\,  \sym \D u) \text{ with a constant coefficient linear operator } L.
\end{align}
\end{subequations}
Also the trace-free Korn's inequalities can be deduced in such a way, relying on the ``higher order'' analogues of the differential relation \eqref{eq:sec_der_id}:
\begin{equation}
 \D\Delta u = L(\D^2 \dev_n\sym \D u),
\end{equation}
cf.~\cite{Dain2006tracefree} for the case $p=2$ and \cite{Schirra2012tracefreenD} for all $p>1$.

The first and the last author used a similar reasoning in their series of papers
\cite{agn_lewintan2019KornLp,agn_lewintan2019KornLpN,agn_lewintan2020KornLp_tracefree,agn_lewintan2020KornLpN_tracefree} to obtain the Korn inequalities for incompatible tensor fields mentioned above. In particular, the gradient of a skew-symmetric matrix field $A$ 
can be expressed as linear combination of the entries of the matrix $\Curl$:
\begin{equation}
 \D A = L(\Curl A),
\end{equation}
which in three dimensions reads exactly as \emph{Nye's formula}  \cite[eq.\!\! (7)]{Nye53}:
\begin{align}\label{eq:Nye}
 \Curl A = \tr(\D \axl A)\,\id- (\D \axl A)^T,\quad
\text{resp.} \quad
\D\axl A = \frac12 (\tr[\Curl A])\id - (\Curl A)^T.
\end{align}
Furthermore, the second derivatives of a skew-symmetric matrix field $A$ are given by linear combinations of the entries of the derivative of the deviatoric matrix $\Curl$:
\begin{equation}
 \D^2 A = L(\D\, \dev\Curl A)
\end{equation}
which was used in the proof of the trace-free case \cite{agn_lewintan2020KornLp_tracefree}.
The expression \eqref{eq:Nye}$_1$ admits  a counterpart on the group of orthogonal matrices $\operatorname{O}(3)$ and even in higher spatial dimensions, see e.g.~\cite{agn_munch2008curl}. Note in passing, that the representation of the kernel of $\sym \D u\equiv0$ can either be deduced from \eqref{eq:sec_der_id} or \eqref{eq:Nye} and yields the class $\mathrm{RM}$ of infinitesimal rigid motions
\begin{equation}
 \mathrm{RM}\coloneqq\{\widetilde{A}\,x+b \mid  \widetilde{A}\in\so(3), b\in\R^3\}.
\end{equation}
Indeed, assuming $\sym \D u\equiv0$
\begin{itemize}
 \item \eqref{eq:sec_der_id} implies that $\D^2 u \equiv 0$, so that $u$ has to be affine with $u\in\mathrm{RM}$, equivalently,
 \item since $\D u = A(x)$ with a skew-symmetric matrix field $A$, we obtain $\Curl A = \Curl \D u \equiv0 $, so that by \eqref{eq:Nye} we deduce $\D \axl A \equiv 0$ and hence $A\equiv\operatorname{const}$, i.e.,~again $u\in\mathrm{RM}$.
\end{itemize}

Summarizing, the following differential relations connecting higher order derivatives have been used in the distributional sense for
\medskip

\begin{tabular}{@{}l>{$}r <{$}@{ \ $ = $\ }>{$}l<{$}c@{}}
\textbullet\ \ classical Korn: &\D^2 u& L(\D\.\sym \D u) & cf.~\cite{Ciarlet2005korn,Geymonat86, DuvautLions72}\\[1.5ex]
\textbullet\ \ trace-free classical Korn: &\D \Delta u & L(\D^2\dev_n\sym\D u) & cf.~\cite{Dain2006tracefree,Schirra2012tracefreenD}\\[1.5ex]
\textbullet\ \ incompatible Korn: &\D A & L(\Curl A) & cf.~\cite{agn_lewintan2019KornLp,agn_lewintan2019KornLpN}\\[1.5ex]
\textbullet\ \ trace-free incompatible Korn: &\D^2 A &  L(\D\.\dev\Curl A) & cf.~\cite{agn_lewintan2020KornLp_tracefree}\\[0.5ex]
&\hspace{-4em}\D^2 (A+\zeta\cdot\id) & L(\D\Curl(A+\zeta\cdot\id))& cf.~\cite{agn_lewintan2020KornLp_tracefree,agn_lewintan2020KornLpN_tracefree}\\[0.5ex]
&\hspace{-4em}\D^3 (A+\zeta\cdot\id) & L(\D^2\dev\Curl(A+\zeta\cdot\id)) &cf.~\cite{agn_lewintan2020KornLp_tracefree}\\[1.5ex]
\textbullet\ \ symmetrized incompatible Korn: & \D^2 A & L(\D\.\sym\Curl A) & \multirow{2}{*}{present paper}\\[0.5ex]
\textbullet\ \ conformally invariant incompatible Korn:&\D^3 A & L(\D^2\dev\sym\Curl A) &
\end{tabular}\medskip

\noindent
denoting by $u$ a displacement vector field, by $A$ a skew-symmetric tensor field, by $\zeta$ a scalar field and by $L$ a corresponding linear operator with constant coefficients. Moreover, we have by  \cite{agn_lewintan2020KornLp_tracefree} for a general field $P$
\begin{align}
 \D\Curl P = L(\D \dev\Curl P)\,.
\end{align}

\subsection{Motivation for Korn type estimates for incompatible tensor fields}
Korn type inequalities for incompatible tensor fields originally motivated from infinitesimal gradient plasticity with plastic spin as well as in the linear relaxed micromorphic elasticity, see e.g. \cite{agn_ebobisse2016canonical, agn_ebobisse2018well,agn_neff2019static, agn_munch2018rotational,agn_neff2014unifying,agn_neff2010stable,agn_neff2009new, agn_ebobisse2010existence,agn_ebobisse2017fourth,RS2017viscoplast,agn_neff2009notes,agn_neff2015relaxed,agn_ghiba2017variant,Bardella_et_al2019gradientplast,Menzel2000gradientplast,Gurtin2000gradientplast,Gurtin2005gradientplast,Gurtin2005gradientplastBurger,Fleck1994gradientplast,Wulfinghoff2015gradientplast, agn_bauer2013dev} and the references contained therein.

\subsubsection{Application to the relaxed micromorphic model}
The relaxed micromorphic model is a novel micromorphic framework \cite{agn_neff2014unifying,agn_neff2019static} that allows e.g.~the description of microstructure-related frequency band-gaps \cite{agn_agostino2019dynamic} through a homogenized linear model.
The goal is to find the displacement $u\colon\Omega\subseteq\R^3\to\R^3$ and the non-symmetric micro-distortion field $P\colon\Omega\subseteq\R^3\to\R^{3\times3}$ minimizing  
\begin{equation*}\label{ener1}
\int_\Omega W\left(\D u,P,\Curl P \right) +\skalarProd{f}{u} \,dx\ ,\quad\text{such that }(u,P)\in H^1(\Omega)\times H(\Curl),
\end{equation*}\vspace{-1.2ex}
where the energy $W$ is defined as
\begin{equation}\label{eq:energy1}
\begin{split}
W =& \ \frac{1}{2}\,\skalarProd{\Ce\,\sym\left(\D u-P\right)}{\sym\left(\D u-P\right)}_{\R^{3\times3}}
 \ + \ \frac{1}{2}\,\skalarProd{\Cmicro\,\sym\,P}{\sym\,P}_{\R^{3\times3}}\\
 & + \ \frac{1}{2}\, \skalarProd{\Cc\,\skew\left(\D u-P\right)}{\skew\left(\D u-P\right)}_{\R^{3\times3}}
 \  + \ \frac{\mu\Lc^2}{2}\, \skalarProd{\mathbb{L} \Curl P}{\Curl P}_{\R^{3\times3}}\,.
\end{split}
 \end{equation}
Here, $\Ce,\,\Cmicro\colon\Sym(3)\to\Sym(3)$ are classical \nth{4} order elasticity tensors, $\Cc\colon\so(3)\to\so(3)$ is a \nth{4} order rotational coupling tensor, $\Lc\geq0$ is a characteristic length scale, $\mu$ is a typical effective shear modulus and $\mathbb{L}\colon\R^{3\times3}\to\R^{3\times3}$. The associated Euler-Lagrange equations read
\begin{equation}\label{eq:EL-first}
\begin{split}
 \Div\left[ \Ce\,\sym\left(\D u- P\right)+\Cc\,\skew\left(\D u- P\right)\right]&=f,\\
 \Ce\,\sym\left(\D u- P\right)+\Cc\,\skew\left(\D u- P\right)-\Cmicro\,\sym\,P-\mu\,L_{c}^{2}\,\Curl[\mathbb{L}\Curl\,P]&=0.
\end{split}
\end{equation}
The generalized moment balance \eqref{eq:EL-first}$_2$ can be seen as a tensorial Maxwell problem due to the $\Curl[\mathbb{L}\Curl P]$ operation, cf.~\cite{BSL}. The most general quadratic representation of the curvature energy is given by
\begin{equation}\label{eq:curvenergy}
 \skalarProd{\mathbb L \Curl P}{\Curl P}
\end{equation}
where $\mathbb L:\R^{3\times 3} \to\R^{3\times 3}$ is a non-standard fourth order tensor with $45$ independent entries acting on the non-symmetric second order tensor $\Curl P\in\R^{3\times 3}$. Since $\Curl P$ transforms as a second order tensor under rotations of the coordinate system, cf.~\cite{agn_munch2018rotational,agn_neff2009notes}, assuming a certain degree of anisotropy allows  one to reduce the complexity of $\mathbb L$. Notably, the most general isotropic quadratic expression of the curvature energy is given by
\begin{equation}\label{eq:curvenergy_alphas}
 \alpha_1\norm{\dev\sym\Curl P}^2+\alpha_2\norm{\skew \Curl P}^2+\frac{\alpha_3}{3}\tr^2(\Curl P),
\end{equation}
with three free parameters $\alpha_1,\alpha_2,\alpha_3\in\R^+$.
Here we have used the orthogonal decomposition of \corrected{$\R^{3\times 3}$} into orthogonal pieces, namely
\begin{subequations}\label{eq:decomposition_matrix}
 \begin{align}
  \corrected{\R^{3\times3}} &= [\sl(3)\cap\Sym(3)]\oplus \so(3)\oplus\R\cdot\id
\intertext{so that for any square matrix $X\in\R^{3\times3}$ we have}
 X &=\dev\sym X + \skew X + \frac13\tr(X)\cdot \id,
\end{align}
\end{subequations}
where $\dev\sym$, $\skew$, $\tr$ are orthogonal projections on the vector space $\sl(3)\cap\Sym(3)$ of symmetric trace free matrices, the space $\so(3)$ of skew-symmetric matrices, 
and the space $\R\cdot\id$ of spherical tensors, respectively.

In order to reduce complexity in the model one might be tempted to replace \eqref{eq:curvenergy} with
\begin{equation}\label{eq:curvenergy-sym}
 \skalarProd{\widehat{\mathbb L}\sym\Curl P}{\sym\Curl P}
\end{equation}
where $\widehat{\mathbb L}:\Sym(3)\to\Sym(3)$ is now a classical positive definite fourth order elasticity tensor, whose representation for all anisotropy classes is completely known. A weak formulation of the static problem
\begin{equation}\label{eq:EL-second}
\begin{split}
 \Div\left[ \Ce\,\sym\left(\D u- P\right)+\Cc\,\skew\left(\D u- P\right)\right]&=f,\\
 \Ce\,\sym\left(\D u- P\right)+\Cc\,\skew\left(\D u- P\right)-\Cmicro\,\sym\,P-\mu\,L_{c}^{2}\,\Curl[\widehat{\mathbb{L}}\sym\Curl\,P]&=0\,,
\end{split}
\end{equation}
is naturally formulated in the space $H(\sym\Curl;\Omega,\R^{3\times3})\coloneqq\{P\in L^2(\Omega,\R^{3\times3}) \mid  \sym\Curl P\in L^2(\Omega,\R^{3\times3})\}$ and our new result shows that this problem  is well-posed for a suitable prescription of tangential boundary data. Returning to \eqref{eq:curvenergy_alphas}, the problem may be even further ``relaxed'' by requiring only to control
\begin{equation}
 \norm{\dev\sym\Curl P}^2.
\end{equation}
In this case, the natural space to consider is the Hilbert space $H(\dev\sym\Curl;\Omega,\R^{3\times3})\coloneqq\{ P\in L^2(\Omega,\R^{3\times3}) \mid  \dev\sym\Curl P\in L^2(\Omega,\R^{3\times3})\}$ and our result \eqref{eq:Korn_Lp-devsymCurl} implies that the weak formulation is still well-posed.
\begin{remark}[Nothing new in plane strain]
Note, that due to the structure of the three-dimensional matrix $\Curl$ operator in  plain strain, i.e.,~assuming that
\begin{equation}
 \widehat{P}(x,y,z)=\begin{pmatrix}
                     \widehat{P}_{11}(x,y) & \widehat{P}_{12}(x,y) & 0\\ \widehat{P}_{21}(x,y) & \widehat{P}_{22}(x,y) & 0 \\ 0 & 0 & 0
                    \end{pmatrix},
                    \qquad
                    \Curl \widehat{P} = \begin{pmatrix}
                                         0 & 0& \ast \\ 0 & 0& \ast \\ 0 & 0 & 0
                                        \end{pmatrix}
\end{equation}
the operation $\sym$ or $\dev\sym$ is not leaving the classical $H(\Curl;\Omega,\R^{3\times3})$ space, since
\begin{equation}
 \norm{\dev_3\sym\Curl \widehat{P}}^2 = \norm{\sym\Curl \widehat P}^2 =\frac12 \norm{\Curl \widehat P}^2.
\end{equation}
Hence, new properties to be discovered are strictly three-dimensional in nature.
 \end{remark}

 \subsubsection{Cosserat model with weakest curvature energy -- conformally invariant curvature}
The use of the dislocation density tensor $\Curl P$ in the relaxed micromorphic model allows a smooth transition in the modeling to the classical linear Cosserat model. Indeed, letting formally $\Cmicro\to\infty$ in the relaxed micromorphic model \eqref{eq:energy1}, i.e.,~assuming  $P=A\in\so(3)$ is skew-symmetric, the (isotropic) elastic Cosserat free energy can be written as
\begin{equation}\label{eq:Cosserat}
\begin{split}
 \int_\Omega\mu\norm{\sym \D u}^2+&\mu_c\norm{\skew(\D u -A)}^2+\frac\lambda2\tr^2(\D u)\\
 &+ \alpha_1\norm{\dev\sym\Curl A}^2 +\alpha_2\norm{\skew \Curl A}^2+\frac{\alpha_3}{3}\tr^2(\Curl A)\,\intd{x}
 \quad\to\quad \min\,.
\end{split}
\end{equation}
In \cite{agn_neff2010stable,agn_neff2009new} it has been shown that choosing $\alpha_1>0$, $\alpha_2=\alpha_3=0$ is mandatory for offering \textit{bounded stiffness in bending and torsion} for arbitrary small specimen. This corresponds to the \textit{conformally invariant} curvature case
\begin{equation}\label{eq:conform}
 \norm{\dev\sym\Curl A}^2=\norm{\dev\sym\D \axl(A)}^2\,.
\end{equation}
Well-posedness results are then based on the trace-free Korn's inequality \cite{agn_neff2009new}.

Finally, letting the Cosserat couple modulus $\mu_c\to\infty$ in \eqref{eq:Cosserat}, one obtains the so-called \textit{modified} indeterminate couple stress model \cite{agn_neff2009subgrid,agn_ghiba2017variant,}
\begin{equation}
 \int_\Omega\mu\norm{\sym \D u}^2+\frac\lambda2\tr^2(\D u)+\alpha_1\underset{\text{conformally invariant curvature\footnotemark}}{\underset{=\norm{\sym\D\curl u}^2}{\underbrace{\norm{\dev\sym\D \curl u}^2}}} \intd{x}\quad \to  \quad \min\,.
\end{equation}
\footnotetext{$\tr(\sym \D \curl u)=\tr(\D\curl u)=\div\curl u\equiv0.$}
In \cite{agn_neff2009subgrid} this curvature energy has been obtained by a passage
from a discrete model to a continuum modeling, invoking  a ``micro-randomness'' assumption, which introduces an additional invariance property beyond isotropy.

\subsubsection{Application to gradient plasticity with plastic spin}
Experiments with differently sized specimens have revealed a pronounced size-effect in elasto-plastic transformations \cite{Fleck1994gradientplast,Gurtin2000gradientplast,Gurtin2005gradientplast,Gurtin2005gradientplastBurger} which cannot be described with classical phenomenological elasto-plasticity models. For the sake of simplicity we assume in the following the additive decomposition of the displacement gradient $\D u$ into non-symmetric elastic (recoverable) and non-symmetric plastic (permanent) distortions $e$ and $P$, respectively:
\begin{equation}\label{eq:decomp_grad}
 \D u = e+P, \qquad \varepsilon\coloneqq\sym \D u = \sym e + \sym P = \varepsilon_e+\varepsilon_p\,,
\end{equation}
under the side condition of plastic incompressibility ~ $\tr(P)=\tr(\varepsilon_p)\equiv 0$.~ A simplified framework for size-independent plasticity can be sketched, based on the introduction of the total free energy, which consist of elastic contributions and local hardening
\begin{align}
W(\D u,\,P) =& \int_\Omega\underset{\text{elastically stored energy}}{\underbrace{\skalarProd{\Ce\,\sym\left(\D u-P\right)}{\sym\left(\D u-P\right)}_{\R^{3\times3}}}}
 \ + \underset{\text{local hardening}}{\underbrace{\skalarProd{\mathbb{C}_{\text{hard}}\,\sym\,P}{\sym\,P}_{\R^{3\times3}}}}+\skalarProd{f}{u}\,\intd{x}\notag \\
 &= \int_\Omega \skalarProd{\Ce\,\left(\varepsilon-\varepsilon_p\right)}{\left(\varepsilon-\varepsilon_p\right)}_{\R^{3\times3}}+\skalarProd{\mathbb{C}_{\text{hard}}\,\varepsilon_p}{\varepsilon_p}+\skalarProd{f}{u}\,\intd{x},\label{eq:energy_plast}
\end{align}
where $\Ce$, $\mathbb{C}_{\text{hard}}$ are classical positive definite fourth order tensors acting on symmetric arguments. \corrected{We are working here in a phenomenological modeling context. A variational approach to single crystals with dislocations, different from out presented phenomenological viewpoint, has been explored in \cite{Scala2020properties,Scala2019variationalapproach} based on \cite{Mueller2008crystals}. Our term $\skalarProd{\mathbb{C}_{\text{hard}}\,\sym\,P}{\sym\,P}$ gives rise to the usual Prager-type  backstress term (linear kinematic hardening) which appears ubiquitous in the literature.} The appearance of $\sym(\D u -P)$ and $\sym P$ instead of $\D u -P$ and $P$ alone is dictated by linearized frame indifference of the model. Equilibrium of forces
\begin{equation}
 \Div \Ce \sym(\D u -P) = \Div \Ce (\varepsilon-\varepsilon_p)= f
\end{equation}
appears from variation of \eqref{eq:energy_plast} with respect to the displacement $u$. It remains to postulate a ``flow rule'', i.e.,~an evolution for the plastic variable $P$. This equation appears as gradient flow with respect to $P$ in the form
\begin{equation}\label{eq:gradientflowP}
 \dot{P}=\mathscr{F}(-\D_{P}W(\D u,\, P))= \mathscr{F}(\Ce\,\sym(\D u -P)-\mathbb{C}_{\text{hard}}\,\sym\,P)
\end{equation}
together with suitable initial conditions for $P$ and boundary conditions for $u$, where $\mathscr{F}:\R^{3\times3}\to\R^{3\times 3}$ is monotone, i.e.,~$\skalarProd{\mathscr{F}(X)-\mathscr{F}(Y)}{X-Y}_{\R^{3\times3}}\ge0$ and maps symmetric arguments to trace-free symmetric arguments, the increment $\dot{P}$ is determined to be trace-free symmetric and \eqref{eq:gradientflowP} can be therefore recast as
\begin{equation}
 \dot{\varepsilon}_p=\mathscr{F}(\Ce\,\left(\varepsilon-\varepsilon_p\right)-\mathbb{C}_{\text{hard}}\,\varepsilon_p)\,,\quad \tr(\varepsilon_p)=0\,.
\end{equation}
In order to extend the modeling framework to incorporate size-dependence, let us focus on the introduction of energetic length scales. In this case, one augments the total energy \eqref{eq:energy_plast}  by some terms involving space derivatives of the plastic distortion $P$ or the plastic strain $\varepsilon_p\coloneqq\sym P$, for simplicity
\begin{equation}\label{eq:normen_der_hilfsterme}
 \norm{\D P}^2 \quad \text{or}\quad \norm{\D \varepsilon_p}^2\,.
\end{equation}
Accordingly, based on \eqref{eq:normen_der_hilfsterme}$_1$, the evolution law \eqref{eq:gradientflowP} needs to be adapted to
\begin{equation}\label{eq:gradientflowP-adapted}
 \dot{P}=\mathscr{F}(-\D_{P}W(\D u,\, P,\,\D P))=\mathscr{F}(\Ce\,\sym(\D u -P)-\mathbb{C}_{\text{hard}}\,\sym\,P+\Delta P)
\end{equation}
and suitable boundary conditions for the plastic distortion $P$, here Dirichlet clamping $P_{|\partial\Omega}\equiv 0$, cf.~\cite{HanReddy2013Plasticity}. For initial condition $P(0)\in\Sym(3)$ \eqref{eq:gradientflowP-adapted} can again be recast into\footnote{Laplace component-wise and observe that $\Delta \varepsilon_p\in\Sym(3)$ for $\varepsilon_p\in\Sym(3)$.}
\begin{equation}
 \dot{\varepsilon}_p=\mathscr{F}(\Ce\,\left(\varepsilon-\varepsilon_p\right)-\mathbb{C}_{\text{hard}}\,\varepsilon_p+\Delta\varepsilon_p)\,, \quad \varepsilon_p{}_{|\raisebox{-3pt}{$\scriptstyle\partial\Omega$}}=0.
\end{equation}
Such a model is already able to predict that smaller samples are relatively stiffer. However, the simple gradient terms in \eqref{eq:normen_der_hilfsterme} lack a microscopical justification. \corrected{However, $\Delta P$ can be seen as regularization term as in \cite{Francfort1994effects}.} Since plasticity is mediated by dislocation movements it inspires that a physically more suitable description is given by considering the \textit{dislocation density tensor} $\Curl P$ and, in first approximation, a simple quadratic function thereof to replace \eqref{eq:normen_der_hilfsterme}. Hence, the total stored energy can be written
\begin{equation}\label{eq:energy-plast-curl}
 \int_{\Omega}\skalarProd{\Ce\,\sym\left(\D u-P\right)}{\sym\left(\D u-P\right)}+\skalarProd{\mathbb{C}_{\text{hard}}\,\sym\,P}{\sym\,P}+\skalarProd{\!\Curl P}{\Curl P}+\skalarProd{f}{u}\,\intd{x}\,.
\end{equation}
Since $\Curl$ is self-adjoint with suitable  \textit{tangential} boundary conditions  $P\times\nu_{|\partial\Omega}=0$, the evolution law turns into
\begin{equation}\label{eq:gradientflowP-mitCurl}
 \dot{P}=\mathscr{F}(\Ce\,\sym(\D u -P)-\mathbb{C}_{\text{hard}}\,\sym\,P-\Curl\Curl P)\,.
\end{equation}
Note that \eqref{eq:gradientflowP-mitCurl} is necessarily an evolution for a non-symmetric plastic distortion $P$ since the contribution $\Curl\Curl P$ does not have any symmetry properties. Such models are called gradient plasticity models with plastic spin or distortion gradient plasticity, cf.~\cite{agn_ebobisse2010existence,agn_ebobisse2016canonical,agn_ebobisse2017fourth,agn_ebobisse2018well,Menzel2000gradientplast,MSZ2014incompatible,agn_neff2009notes,RS2017viscoplast,Wulfinghoff2015gradientplast}.

A closer look at \eqref{eq:energy-plast-curl} reveals that the energy provides a uniform control over
\begin{equation}\label{eq:control_estimate}
 \skalarProd{\mathbb{C}_{\text{hard}}\,\sym\,P}{\sym P}+\norm{\Curl P}^2\ge c\,\big(\norm{\sym P}^2+\norm{\Curl P}^2\big)
\end{equation}
where $\mathbb{C}_{\text{hard}}$ is assumed to be positive definite. That the right-hand side of \eqref{eq:control_estimate} provides a norm on smooth, compactly supported functions $P\in C_0^\infty(\Omega,\R^{3\times 3})$ was first noted in \cite{agn_neff2009notes}. Indeed, $\sym P\equiv 0$ implies $P=A\in C_0^\infty(\Omega,\so(3))$, so that using Nye's formula \eqref{eq:Nye}$_2$ we deduce from $\Curl A\equiv 0$ that $\D\axl A\equiv0$. Hence, $\axl A$ is a constant vector field, $P=A$ is a constant skew-symmetric matrix field, and with the boundary condition we obtain $P\equiv0$.
Thus, controlling the plastic strain $\sym P$ in $L^2$ and the dislocation density tensor $\Curl P\in L^2$, together with suitable tangential boundary conditions\footnote{In the context of gradient plasticity, the boundary conditions $P\times \nu=0$ postulates ``no flux of the Burgers vector across the boundary surface" and is referred to as ``micro-hard", cf.~\cite{Gurtin2005gradientplastBurger}.} $P\times \nu=0$ on $\partial\Omega$ one controls the full plastic distortion $P\in L^2$. The result led to a sequel of well-posedness results in gradient plasticity with plastic spin in $H(\Curl)$, cf.~\cite{agn_ebobisse2010existence,agn_ebobisse2016canonical,agn_ebobisse2017fourth,agn_ebobisse2018well,agn_neff2009notes}.

However, choosing a simple quadratic energy in $\Curl P$ in \eqref{eq:energy-plast-curl} is, for many situations, not suitable. A major scientific question is therefore, how to replace $\norm{\Curl P}^2 $ in order to capture important physical features. Let us write $\mathcal{G}(\Curl P)$ for this extension. Experimental evidence suggests to use $\mathcal{G}$ with sub-quadratic  growth and where the behavior at zero can be differentiable or not. Indeed, in \cite{Wulfinghoff2015gradientplast} it is argued to consider the one-homogeneous expression $\mathcal{G}(\Curl P)=\norm{\Curl P}$ or $\mathcal{G}(\Curl P)=\norm{\Curl P}\cdot\log\norm{\Curl P}$, see also \cite{Bardella_et_al2019gradientplast,CO2005dislocation,Ohno2007higher}.

It is furthermore possible to extract some geometrical information from the dislocation density tensor (on the mesoscale). The indices $i$ and $j$ of $(\Curl P)_{ij}$ determine the orientation of the Burger's vector and the dislocation line, respectively. The diagonal components of $\Curl P$ describe \textit{screw dislocations} and the off-diagonal components describe \textit{edge dislocations}. \corrected{For an overview on dislocations in the framework of different types of generalized continua we refer the reader to \cite{Lazar2010disclocations,Acharya2003forces,Laza2006revisited} and the references therein.} Lazar \cite{Lazar2002elastoplast,Lazar:hal}, see also \cite{agn_neff2015relaxed}, has used the decomposition of the dislocation density tensor into $\SO(3)$-irreducible pieces
\begin{equation}\label{eq:density_tensor-decomp}
 \Curl P = \underset{\text{``tentor''}}{\underbrace{\dev\sym\Curl P}}+\underset{\text{``trator''}}{\underbrace{\skew \Curl P}} + \underset{\text{``axitor''}}{\underbrace{\frac13\tr(\Curl P)\cdot\id}}
\end{equation}
i.e.,~ the orthogonal decomposition \eqref{eq:decomposition_matrix}. So, the axitor corresponds to the sum of all possible screw dislocations, the trator to ``skew-symmetric" edge dislocations and the tentor describes a combination of ``symmetric" edge-dislocations and single screw-dislocations, cf.~\cite{agn_neff2015relaxed}. In addition, for compatible $X=\D u$ the decomposition \eqref{eq:decomposition_matrix} reads
\begin{equation}\label{eq:decompoDu}
 \D u =\underset{\text{shape-change}}{\underset{\text{anti-conformal part,}}{\underset{\text{shear}}{\underbrace{\dev\sym\D u }}}}+\underset{\text{no shape change}}{\underset{\text{conformal part,}}{\underbrace{\underset{\text{rotation}}{\underset{=\, \frac12\Anti(\curl u) }{\underbrace{\skew \D u}}} + \underset{\text{volumetric part}}{\frac13\,\underset{=\,\div u}{\underbrace{\tr(\D u)}}\cdot\id}}}}\,.
\end{equation}
The introduced nomenclature coming from the fact that $\dev\sym\D u=0$ implies that $u=\varphi_C$ is an infinitesimal conformal mapping, see \eqref{eq:infinitesimalconfis}.\footnote{Note, that besides the divergence and the curl of a vector field also the term $\dev\sym\D u$ has a physical interpretation, namely as the \textit{shear}, since for ``a cube of moving fluid, the shear [of the velocity of that fluid] represents the rate at which each side is deviating from a square, and the nature of that deviation", cf. \cite{Romano2012shear} where  the authors also make use of the ``natural" decomposition \eqref{eq:decompoDu}, cf. \cite[eq. (6)]{Romano2012shear}.}
\begin{figure}[h]\centering
\includegraphics{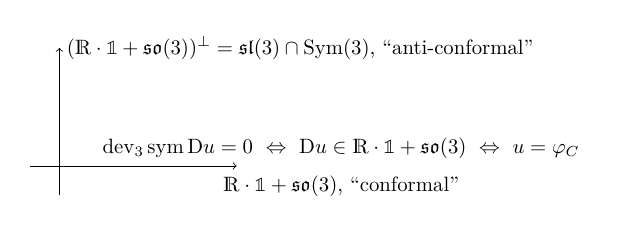}\vspace{-2ex}
\caption{Orthogonal decomposition and infinitesimal conformal mappings, see \eqref{eq:infinitesimalconfis}.}
 \end{figure}

For improved transparency in the physical modeling, we may now assume an additively decomposed ansatz for $\mathcal{G}$:
\begin{equation}
 \begin{split}
  \mathcal{G}(\Curl P)&= \mathcal{G}(\dev\sym\Curl P+\skew\Curl P + \frac13\tr(\Curl P)\cdot\id)\\
  &=\mathcal{G}_1(\dev\sym\Curl P)+\mathcal{G}_2(\skew\Curl P) + \mathcal{G}_3(\tr(\Curl P)\cdot\id)\\
  & = \alpha_1\norm{\dev\sym\Curl P}^{q_1}+\alpha_2\norm{\skew\Curl P}^{q_2}+\frac{\alpha_3}{3}\abs{\tr(\Curl P)}^{q_3}
 \end{split}
\end{equation}
where in the last step we considered a simple isotropic example. Our novel result \eqref{eq:Korn_Lp-devsymCurl} shows that under the conditions $\alpha_1>0$, $\alpha_2=\alpha_3\ge 0$, $q_1>1$  one can control the plastic distortion $P$ in $L^{q_1}$.

\subsubsection{Incompatible linear elasticity}
Instead of the classical linear elasticity problem
\begin{equation}\label{eq:classlinelast}
 \int_{\Omega} \skalarProd{\Ce\sym\D u}{\sym \D u}+\skalarProd{\underset{f}{\underbrace{\Div G}}}{u}\,\intd{x} \quad\to\quad \min \qquad u\in H^1(\Omega,\R^3),\footnotemark
\end{equation}
\footnotetext{$\div(G^T u)=\skalarProd{\Div G}{u}_{\R^3}+\skalarProd{G}{\D u}_{\R^{3\times3}}$, to fix notation.}
that is
\begin{equation}\label{eq:el-to-recover}
 \Div \Ce\sym\D u= \Div G, \qquad u_{|\Gamma}=\widetilde{u}\in H^1(\Omega,\R^3),
\end{equation}
we may consider the corresponding incompatible nonlinear elasticity problem
\begin{subequations}\label{eq:incomplinelast}
\begin{equation}
 \int_{\Omega} \skalarProd{\Ce\sym e}{\sym e}-\skalarProd{G}{e} + \mu\,L_c^r\,\norm{\dev\sym\Curl e}^r\,\intd{x} \ \to\ \min \quad e\in W^{1,\,2,\,r}(\dev\sym\Curl;\Omega,\R^{3\times3}),
\end{equation}
in other words the strong form of the second order Euler-Lagrange equations formally reads
\begin{equation}
\Ce\sym e +r\,\mu\, L_c^r\Curl\left(\frac{\dev\sym\Curl e}{\norm{\dev\sym\Curl e}^{r-2}}\right)=G, \qquad \tr(\Ce\sym e) = \tr(G),\footnotemark
\end{equation}
under the (consistent) symmetrized tangential boundary condition
\begin{equation}
 \dev\sym (e\times \nu) _{|\Gamma} = \dev\sym(\D \widetilde{u}\times \nu)
\end{equation}
\end{subequations}
\footnotetext{Since $\tr(\Curl S)= 0$ for $S\in\Sym(3)$.}
where $1<r\le 2$, $\widetilde{u}\in H^1(\Omega,\R^3)$ and $G\in H^1_0(\Omega,\R^{3\times 3})$ are prescribed and $\nu$ is the outward unit vector field to $\partial\Omega$. According to our Theorem \ref{thm:main2sym_new}, the solution to \eqref{eq:incomplinelast} is unique with respect to the non-symmetric elastic distortion $e\in W^{1,\,2,\,r}(\dev\sym\Curl;\Omega,\R^{3\times3})$. Note that for $\frac65\le r \le 2$ it holds  $W^{1,\,2,\,r}(\dev\sym\Curl;\Omega,\R^{3\times3})\supsetneqq W^{1,\,2}(\Omega,\R^{3\times3})$. The formulation \eqref{eq:incomplinelast}   might therefore be useful in problems with fracture. Furthermore, the stored energy in \eqref{eq:incomplinelast} is always bounded above by the corresponding energy in \eqref{eq:classlinelast} for the compatible case. Replacing $e=\D u$ and taking the divergence, recovers \eqref{eq:el-to-recover}$_1$.

In the same spirit, in \cite{Amstutz2020existence} the authors have considered the \emph{non-variational} second-order problem
\begin{equation}
 \Ce\sym e + 2 \mu L_c^2\, \inc(\sym e) = G,\qquad   \inc(\sym e)\,\nu=0 \quad \text{on $\partial\Omega$},
\end{equation}
which also looks for a ``relaxation" of linear elasticity and determines a unique solution $\sym e \in L^2(\Omega)$, $\inc(\sym e)\in L^2(\Omega)$, where $\inc P \coloneqq\Curl[(\Curl P)^T]$ and further properties of the $\inc$ operator will be discussed below. Replacing $\sym e=\sym \D u$ and taking the divergence would also recover \eqref{eq:el-to-recover}$_1$.

\section{Notations and technical preliminaries}
Let $n\geq2$. For vectors $a,b\in\R^n$ we consider the scalar product  $\skalarProd{a}{b}\coloneqq\sum_{i=1}^n a_i\,b_i \in \R$, the (squared) norm  $\norm{a}^2\coloneqq\skalarProd{a}{a}$ and the dyadic product  $a\otimes b \coloneqq \left(a_i\,b_j\right)_{i,j=1,\ldots,n}\in \R^{n\times n}$. Similarly, the scalar product for matrices $P,Q\in\R^{n\times n}$ is given by $\skalarProd{P}{Q} \coloneqq\sum_{i,j=1}^n P_{ij}\,Q_{ij} \in \R$ and the (squared) Frobenius-norm by $\norm{P}^2\coloneqq\skalarProd{P}{P}$.
Moreover, $P^T\coloneqq (P_{ji})_{i,j=1,\ldots,n}$ stands for the transposition of the matrix $P=(P_{ij})_{i,j=1,\ldots,n}$. We make use of the orthogonal decomposition of the latter into the symmetric part $\sym P \coloneqq \frac12\left(P+P^T\right)$ and the skew-symmetric part $\skew P \coloneqq \frac12\left(P-P^T\right)$. We denote by $\sl(n)\coloneqq\{X\in\R^{n\times n}\mid \tr(X)=0\}$ the Lie-algebra of trace-free matrices, with $\Sym(n)\coloneqq\{X\in\R^{n\times n}\mid  X^T=X\}$ and by $\so(n)\coloneqq \{A\in\R^{n\times n}\mid A^T = -A\}$ the Lie-Algebra of skew-symmetric matrices. For the identity matrix we write $\id$, so that the trace of a squared matrix $P$ is \ $\tr P \coloneqq \skalarProd{P}{\id}$. The deviatoric (trace-free) part of $P$ is given by $\dev_n P\coloneqq P -\frac1n\tr(P)\,\id$ and in three dimensions we will suppress its index, i.e.,~we write $\dev$ instead of $\dev_3$.

By $\mathscr{D}'(\Omega)$ we denote the space of distributions on a bounded Lipschitz domain $\Omega\subset\R^n$ and by $W^{-k,\,p}(\Omega)$ the dual space of $W^{k,\,p'}_0(\Omega)$, where $p'=\frac{p}{p-1}$ is the dual H\"older exponent to $p$.

\subsection{The three-dimensional case}\label{sec:3D}
In $\R^3$ we further make use of the vector product $\times:\R^3\times\R^3\to\R^3$.
For a fixed vector $a\in\R^3$ the cross product $a\times.$ is linear in
the second component, so that there exists a unique matrix $\Anti(a)$ such that

\begin{equation}\label{eq:def_Anti}
 a \times b \eqqcolon \Anti(a)\, b \qquad \forall\, b\in\R^3.
\end{equation}
Direct computations show that for  $a=(a_1,a_2,a_3)^T$ the matrix $\Anti(a)$ is of the form
\begin{equation}\label{eq:Anti_Komp}
\Anti(a)=
 \begin{pmatrix}
   0 & -a_3 & a_2\\ a_3 & 0 & -a_1 \\ -a_2 & a_1 &0
  \end{pmatrix},
\end{equation}
so that with $\Anti:\R^3\to\so(3)$ we have a canonical identification of $\R^3$ with the vector space of skew-symmetric matrices $\so(3)$. This algebraic approach to the cross product facilitates some of the traditional proofs of vector algebra, cf.~\cite{Trenkler2001vectprod,Room1952vectprod,LiuTrenkler2008,GTT1999vectprodinC}. Indeed,  also the notations $T_a$, $W(a)$ or even $[a]_\times$ are used for $\Anti(a)$, but the latter emphasizes that we deal with a skew-symmetric matrix. Furthermore, the vector product can be written as
\begin{equation}\label{eq:vectCross}
 a \times b = \Anti(a)\, b = -b\times a = -\Anti(b)\,a=(a^T \Anti(b))^T \qquad \forall\, a,b\in\R^3.
\end{equation}
The inverse of $\Anti$ will be called $\axl:\so(3)\to\R^3$ and it associates to a skew-symmetric  matrix $A\in\so(3)$ the (axial) vector ~ $\axl A\coloneqq (-A_{23},A_{13},-A_{12})^T$, so that
\begin{equation}\label{eq:axlDef}
 A\, b = \axl(A)\times b \qquad \forall\, b\in\R^3.
\end{equation}
The identification of the vector product with a suitable matrix product allows us to generalize the vector product in $\R^3$ to a vector product of a vector $b\in\R^3$ and a matrix $P\in\R^{3\times 3}$ from the left and from the right:
\begin{equation}\label{eq:matrixCross}
 b \times P \coloneqq \Anti(b)\, P \qquad \text{and}\qquad P\times b \coloneqq P\,\Anti(b),
\end{equation}
Thus, $b \times P $   is given by a column-wise vector multiplication
\begin{align}
\begin{split}
 b\times P &= \Anti(b)\,\big(P\,e_1 \mid  P\,e_2 \mid  P\,e_3\big) = \big(\Anti(b)\,P\,e_1 \mid \Anti(b)\,P\,e_2 \mid  \Anti(b)\,P\,e_3\big)\\
 & = \big(b\times(P\,e_1) \mid  b\times (P\,e_2) \mid  b\times (P\,e_3)\big),
 \end{split}
\end{align}
whereas in $P\times b$ we have a row-wise vector multiplication
\begin{align}\label{eq:matrixCurl}
\begin{split}
 P\times b &=
 \begin{pmatrix}
  (P^T\,e_1)^T \\ (P^T\,e_2)^T \\(P^T\,e_3)^T
 \end{pmatrix}\Anti(b) =
 \begin{pmatrix}
  (P^T\,e_1)^T\Anti(b) \\ (P^T\,e_2)^T\Anti(b) \\(P^T\,e_3)^T\Anti(b)
 \end{pmatrix}
\overset{\eqref{eq:vectCross}}{=}
\begin{pmatrix}
 ((P^Te_1)\times b)^T \\((P^Te_3)\times b)^T \\ ((P^Te_3)\times b)^T
\end{pmatrix}\,.
 \end{split}
\end{align}
For the identity matrix we obtain
\begin{equation}\label{eq:idcrossvec}
 \id\times b = \id\,\Anti(b) =\Anti(b) \qquad \forall\ b\in\R^3.
\end{equation}
Formally, Nye's formula \eqref{eq:Nye} is a consequence of the following algebraic identity\footnote{This algebraic relation is already contained in \cite[p. 691 (ii)]{Room1952vectprod}.}:
\begin{equation}\label{eq:prod_id}
\Anti(a)\Anti(b)=(\Anti(a))\times b = b \otimes a -\skalarProd{b}{a}\, \id = b\otimes a - \tr(b\otimes a)\,\id  \qquad \forall\,  a,b\in\R^3
\end{equation}
and the second identity \eqref{eq:Nye}$_1$ comes from the converse expression
\begin{equation}
 b\otimes a  = (\Anti(a))\times b + \skalarProd{b}{a}\,\id = (\Anti(a))\times b - \frac12\tr((\Anti(a))\times b)\,\id
\end{equation}
where we have used
\begin{equation}\label{eq:trAntiatimesb}
 \tr((\Anti(a))\times b) ) = \tr(\Anti(a) \Anti(b)) = \skalarProd{\!\Anti(a) \Anti(b)}{\id}=-\skalarProd{\!\Anti(a)}{\Anti(b)} \overset{\eqref{eq:Anti_Komp}}{=}-2 \skalarProd{a}{b}.
\end{equation}
In addition, for all $b\in\R^3$ we obtain
\begin{align}
 \Anti(b)\Anti(b)&\overset{\eqref{eq:matrixCross}}{=} \Anti(b)\times b \overset{\eqref{eq:prod_id}}{=} b\otimes b -\norm{b}^2\id\,, \label{eq:anti^2}
 \shortintertext{so that}
 \Anti(b)\Anti(b)\Anti(b)&\underset{\eqref{eq:matrixCross}}{\overset{\eqref{eq:anti^2}}{=}}\big(b\otimes b -\norm{b}^2\id \big)\times b = -\norm{b}^2\Anti(b)\,.\label{eq:anti^3}
\end{align}
Consequently, for a symmetric matrix $S$ we have \ $\tr(S\times b)= 0$ for any $b\in\R^3$, since
\begin{align}\label{eq:trStimesb}
\tr(S\times b)=\skalarProd{S\times b}{\id}\overset{\eqref{eq:matrixCross}}{=}\skalarProd{S\,\Anti(b)}{\id}=\skalarProd{\Anti(b)}{S^T}\overset{S\in\Sym(3)}{=}0\,,
\end{align}
and similarly
\begin{align}
 \tr((S\times b)\times b)^T\times b) &= \skalarProd{(S\times b)\times b)^T\times b}{\id}
 \overset{\eqref{eq:matrixCross}}{=}\skalarProd{\Anti(b)\Anti(b)\, S \Anti(b)}{\id}\notag \\ & = -\skalarProd{S}{\Anti(b)\Anti(b)\Anti(b)}\overset{\eqref{eq:anti^3}}{=} \norm{b}^2\skalarProd{S}{\Anti(b)} \overset{S\in\Sym(3)}{=}0\,.
 \label{eq:trdreimalcross}
\end{align}
Furthermore, we can consider the vector multiplication on both sides:
\begin{equation}
 b\times P \times b = \Anti(b)\, P \, \Anti(b)\,.
\end{equation}
However, from the viewpoint of application it is more convenient to look at
\begin{equation}\label{eq:alg_inc}
 (P\times b)^T\times b = (P\,\Anti (b))^T\Anti(b) = - \Anti(b)\,P^T\Anti(b) = - b\times P^T\times b.
\end{equation}
In particular, for a skew-symmetric matrix $A\in\so(3)$ and a symmetric matrix $S\in\Sym(3)$ we have
\begin{equation}
 (A\times b)^T\times b = b\times A \times b \qquad \text{and}\qquad  (S\times b)^T\times b = - b\times S \times b\,.
\end{equation}

\begin{observation}\label{obs:2}
  For $a,b\in\R^3$ we have
\begin{equation}\label{eq:normUNGLdevsym}
\frac12\norm{a}^2\norm{b}^2\le \norm{\dev\sym(\Anti(a)\times b)}^2\le \frac23\norm{a}^2\norm{b}^2.
\end{equation}
\end{observation}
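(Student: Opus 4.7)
I would compute $\dev\sym(\Anti(a)\times b)$ explicitly in terms of $a$ and $b$ and then read off the bounds from Cauchy--Schwarz.

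First I would invoke the algebraic identity \eqref{eq:prod_id} to write
\[
 \Anti(a)\times b = b\otimes a - \skalarProd{a}{b}\,\id\,.
\]
Symmetrizing gives $\sym(\Anti(a)\times b) = \tfrac12(a\otimes b + b\otimes a) - \skalarProd{a}{b}\,\id$, whose trace equals $\skalarProd{a}{b}-3\skalarProd{a}{b} = -2\skalarProd{a}{b}$. Subtracting the spherical part yields
\[
 \dev\sym(\Anti(a)\times b) = \tfrac12(a\otimes b + b\otimes a) - \tfrac13\skalarProd{a}{b}\,\id\,.
\]

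Next I would compute the Frobenius norm using $\norm{a\otimes b}^2 = \norm{a}^2\norm{b}^2$, $\skalarProd{a\otimes b}{b\otimes a} = \skalarProd{a}{b}^2$, $\skalarProd{a\otimes b + b\otimes a}{\id} = 2\skalarProd{a}{b}$ and $\norm{\id}^2 = 3$. Expanding $\norm{A-B}^2 = \norm{A}^2 - 2\skalarProd{A}{B} + \norm{B}^2$ with $A = \tfrac12(a\otimes b+b\otimes a)$ and $B = \tfrac13\skalarProd{a}{b}\,\id$, a short calculation gives
\[
 \norm{\dev\sym(\Anti(a)\times b)}^2 = \tfrac12\norm{a}^2\norm{b}^2 + \tfrac16\skalarProd{a}{b}^2\,.
\]

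Finally I would apply Cauchy--Schwarz, $0 \le \skalarProd{a}{b}^2 \le \norm{a}^2\norm{b}^2$, to conclude
\[
 \tfrac12\norm{a}^2\norm{b}^2 \le \norm{\dev\sym(\Anti(a)\times b)}^2 \le \tfrac12\norm{a}^2\norm{b}^2 + \tfrac16\norm{a}^2\norm{b}^2 = \tfrac23\norm{a}^2\norm{b}^2,
\]
with the lower bound saturated when $a \perp b$ and the upper bound saturated when $a$ and $b$ are colinear. There is no real obstacle here; the only thing to be careful about is not losing the factor $\tfrac13$ in the deviatoric correction, so I would double check the trace computation before expanding the squared norm.
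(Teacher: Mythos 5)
Your proof is correct and follows essentially the same route as the paper: both start from the algebraic identity $\Anti(a)\times b=b\otimes a-\skalarProd{a}{b}\,\id$, extract the deviatoric symmetric part $\sym(a\otimes b)-\tfrac13\skalarProd{a}{b}\,\id$, compute the Frobenius norm to obtain $\tfrac12\norm{a}^2\norm{b}^2+\tfrac16\skalarProd{a}{b}^2$, and conclude by Cauchy--Schwarz. The only cosmetic difference is that you expand $\norm{A-B}^2$ directly, whereas the paper first records $\norm{\sym(a\otimes b)}^2$ as a separate step, but the arithmetic is identical.
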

\begin{proof}
  Considering the ~ $\dev\sym$ ~ parts on both sides of \eqref{eq:prod_id} we obtain
 \begin{equation}\label{eq:iii}
  \dev\sym(\Anti(a)\times b) \overset{\eqref{eq:prod_id}}{=} \dev\sym(b\otimes a) = \sym(a\otimes b)-\frac13\tr(b\otimes a)\cdot\id=\sym(a\otimes b)-\frac13\skalarProd{a}{b}\cdot\id.
 \end{equation}
 Since,
 \begin{equation}\label{eq:norm_sym}
  \norm{\sym(a\otimes b)}^2=\frac14\norm{a\otimes b+ b\otimes a}^2=\frac12\norm{a\otimes b}^2+\frac12\skalarProd{a\otimes b}{b\otimes a} = \frac12\norm{a}^2\norm{b}^2+\frac12\skalarProd{a}{b}^2,
 \end{equation}
taking the squared norm on both sides of \eqref{eq:iii} we obtain
\begin{align}
 \norm{\dev\sym(\Anti(a)\times b)}^2\ \ &\overset{\mathclap{\eqref{eq:iii}}}{=}\ \ \norm{\sym(a\otimes b)}^2 +\frac19\skalarProd{a}{b}^2\norm{\id}^2-\frac23\skalarProd{a}{b}\skalarProd{\sym(a\otimes b)}{\id} \\
 &\overset{\mathclap{\eqref{eq:norm_sym}}}{=}\  \frac12\norm{a}^2\norm{b}^2+\frac12\skalarProd{a}{b}^2+\frac13\skalarProd{a}{b}^2-\frac23\skalarProd{a}{b}^2
  = \ \frac12\norm{a}^2\norm{b}^2+\frac16\skalarProd{a}{b}^2. \notag
\end{align}
The right hand side is bounded from above by $\frac23\norm{a}^2\norm{b}^2$ and from below by $\frac12\norm{a}^2\norm{b}^2$. These bounds are sharp if $a$ is parallel to $b$ and if $a$ is perpendicular to $b$, respectively.
\end{proof}
\begin{remark}
 Due to the identification of skew-symmetric matrices with vectors in $\R^3$ the relation \eqref{eq:normUNGLdevsym} reads also
 \begin{equation}\label{eq:normUNGLdevsymA}
  \frac18\norm{A}^2\norm{\widetilde{A}}^2\le\norm{\dev\sym(A\,\widetilde{A})}^2\le\frac16\norm{A}^2\norm{\widetilde{A}}^2 \qquad \forall A, \widetilde{A}\in\so(3).
 \end{equation}
Indeed, setting $a\coloneqq \axl A$ and $\widetilde{a}\coloneqq \axl \widetilde{A}$, we have $A\,\widetilde{A}= \Anti(a)\times\widetilde{a}$, so that the estimate follows from \eqref{eq:normUNGLdevsym} in combination with the identities
 $\norm{A}^2=2\norm{a}^2$ and $\norm{\widetilde{A}}^2=2\norm{\widetilde{a}}^2$. The bounds in \eqref{eq:normUNGLdevsymA} are sharp. The upper bound is achieved for $A=\widetilde{A}$ and the lower bound is achieved, e.g., for $A=\Anti(e_1)=e_3\otimes e_2 - e_2 \otimes e_3$ and $\widetilde{A}=\Anti(e_2)=e_1\otimes e_3 -e_3 \otimes e_1$.
\end{remark}
In \cite{agn_lewintan2020KornLp_tracefree} we used moreover for $P\in\R^{3\times 3}$ and $b\in\R^3$ the relation
\begin{equation}
 \dev(P\times b) = 0 \quad \Leftrightarrow \quad P\times b = 0.
\end{equation}
Here,  we use a similar equivalence.
\begin{observation}\label{obs:3}
 For $P\in\R^{3\times 3}$ and $b\in\R^3$ we have
 \begin{equation}\label{eq:aus_obs3}
  \dev\sym(P\times b)= 0 \ \Leftrightarrow \ \sym(P\times b) = 0 \,.
 \end{equation}
\end{observation}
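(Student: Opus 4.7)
The implication $\sym(P\times b) = 0 \Rightarrow \dev\sym(P\times b) = 0$ is immediate, so only the converse requires work. The case $b = 0$ is trivial since then $P\times b = 0$, so I assume $b \neq 0$ throughout.

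The plan is to apply the matrix $\sym(P\times b)$ to the distinguished vector $b$ itself and compare two ways of writing the result. Under the assumption $\dev\sym(P\times b) = 0$, the symmetric matrix $\sym(P\times b)$ must be spherical: $\sym(P\times b) = \lambda\,\id$ for some $\lambda\in\R$ (explicitly $\lambda = \frac{1}{3}\tr(P\times b)$). Consequently $\sym(P\times b)\,b = \lambda\,b$ lies in $\R\cdot b$.

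On the other hand, from $(P\times b)^T = (P\,\Anti(b))^T = -\Anti(b)\,P^T$ I will rewrite $\sym(P\times b) = \frac{1}{2}\bigl(P\,\Anti(b) - \Anti(b)\,P^T\bigr)$, and then, using the crucial identity $\Anti(b)\,b = b\times b = 0$, I obtain
\begin{equation*}
\sym(P\times b)\,b \;=\; \tfrac{1}{2}\,P\,\Anti(b)\,b \;-\; \tfrac{1}{2}\,\Anti(b)\,P^T\,b \;=\; -\tfrac{1}{2}\,b\times(P^T b),
\end{equation*}
which lies in $b^\perp$. Taking the scalar product of both expressions for $\sym(P\times b)\,b$ with $b$ forces $\lambda\,\norm{b}^2 = 0$, so $\lambda = 0$ and therefore $\sym(P\times b) = 0$, as desired.

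The only conceptual hurdle is recognizing that the hypothesis $\dev\sym(P\times b) = 0$ by itself is strictly weaker than the conclusion (it only enforces sphericity of $\sym(P\times b)$), so some extra rigidity must be extracted. That rigidity comes for free by probing $\sym(P\times b)$ along the distinguished direction $b$ and invoking $\Anti(b)\,b = 0$: the two candidate values of $\sym(P\times b)\,b$ then live in the orthogonal subspaces $\R\cdot b$ and $b^\perp$ of $\R^3$, which pins down $\lambda = 0$ at once.
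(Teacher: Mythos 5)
Your proof is correct and uses essentially the same mechanism as the paper: probing $\sym(P\times b)$ along the distinguished direction $b$ and exploiting $\Anti(b)\,b = 0$, i.e.\ $(P\times b)\,b = 0$, to force the spherical part to vanish. The paper packages the same observation as an exact algebraic identity $\norm{b}^2\sym(P\times b) = \norm{b}^2\dev\sym(P\times b) - \skalarProd{b}{\dev\sym(P\times b)\,b}\,\id$, which additionally yields the quantitative bound \eqref{eq:normequiv-top}; your version establishes only the qualitative equivalence, but that is all the Observation asserts.
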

\begin{remark}
 Surely, \eqref{eq:aus_obs3} is not equivalent to the condition $P\times b=0$, cf.~the example in \eqref{eq:idcrossvec}.
\end{remark}
\begin{proof}[Proof of Observation \ref{obs:3}]
 We decompose $P$ into its symmetric and skew-symmetric part, i.e.,
 \[
  P= S +\Anti(a), \qquad \text{for some }S\in\Sym(3), \ a\in\R^3.
 \]
and obtain
\begin{align}
 \dev\sym(P\times b) &= \ \sym(P\times b) -\frac13\tr(P\times b)\,\id \overset{\eqref{eq:trStimesb}}{=} \sym(P\times b) -\frac13\tr(\Anti(a)\times b)\,\id \notag\\ &
 \overset{\mathclap{\eqref{eq:trAntiatimesb}}}{=}\ \sym(P\times b) +\frac23\skalarProd{a}{b}\,\id.\label{eq:firststepdevsymPtimesb}
\end{align}
Moreover, for any matrix $P\in\R^{3\times3}$ it holds
\begin{equation}\label{eq:crossprod_scalarprod}
 (P\times b)\,b \overset{\eqref{eq:matrixCross}}{=}P\,\Anti(b)\, b \overset{\eqref{eq:vectCross}}{=}P\, (b\times b) =0\,.
\end{equation}
Thus, we obtain
\begin{equation}\label{eq:hilfs_schritt2a}
\skalarProd{b}{\dev\sym(P\times b)\,b} \overset{\eqref{eq:firststepdevsymPtimesb}}{=}\skalarProd{b}{\big(\sym(P\times b) +\frac23\skalarProd{a}{b}\id\big)\, b} \overset{\eqref{eq:crossprod_scalarprod}}{=} \frac23\skalarProd{a}{b}\,\norm{b}^2,
\end{equation}
and the statement follows from the identity
\begin{align}
 \norm{b}^2\, \sym(P\times b) \ \ &\overset{\mathclap{\eqref{eq:firststepdevsymPtimesb}}}{=}\ \ \norm{b}^2 \, \dev\sym(P\times b)-\frac23\norm{b}^2\skalarProd{a}{b}\,\id \notag \\
 &\overset{\mathclap{\eqref{eq:hilfs_schritt2a}}}{=}\ \ \norm{b}^2 \, \dev\sym(P\times b) - \skalarProd{b}{\dev\sym(P\times b)\,b}\,\id\,.\label{eq:PtbedevsymPtb}
 \end{align}
Applying the Cauchy-Bunyakovsky-Schwarz inequality on the right hand side of \eqref{eq:PtbedevsymPtb} we obtain
\begin{equation}\label{eq:normequiv-top}
 \norm{\dev\sym(P\times b)}\le\norm{\sym(P\times b)} \overset{\eqref{eq:PtbedevsymPtb}}{\le}\left(1+\sqrt{3}\right)\norm{\dev\sym(P\times b)}\,.
\end{equation}
 \end{proof}

\subsection{Considerations from vector calculus}\label{sec:Formalism}
The vector differential operator $\nabla$ behaves algebraically like a vector, so that, formally, the derivative, the divergence and the curl of a vector field $a\in \mathscr{D}'(\Omega,\R^3)$ can be expressed as
\begin{equation}
 \D a = a\otimes \nabla=(\nabla\otimes a)^T, \quad \div a= \skalarProd{a}{\nabla}=\skalarProd{\D a}{\id}=\tr(\D a) \quad \text{and} \quad  \curl a = a\times (-\nabla)=\nabla\times a.
\end{equation}
Formally, the Laplace operator behaves like a scalar with $\Delta=\norm{\nabla}^2$.

More generally, we can use multilinear expressions to define differential operators as follows. Let $V$ and $W$ be finite-dimensional vectorspaces and let $\mathrm{Lin}(V,W)$  denote the
space of linear maps from $V$ to $W$. Let $M: \R^d \times \ldots \times \R^d \to \mathrm{Lin}(V,W)$ be a multilinear map
and denote by  $M_{i_1 \ldots i_r}\coloneqq M(e_{i_1}, \ldots, e_{i_r})$  the coeffiencts of $M$ 
with respect to the standard basis $e_1, \ldots, e_d$ of $\R^d$. 
We define a differential operator $D_M$ by
$$ D_M = \sum_{i_1, \ldots, i_r}  M_{i_1 \ldots i_r} \partial_{i_1}  \ldots \partial_{i_r}$$
where each index $i_j$ runs from $1$ to $d$.
Let $\Omega \in \R^d$ be open. Then $D_M$ maps a distribution $f  \in\mathscr{D}'(\Omega,V)$
to a distribution $D_M f \in\mathscr{D}'(\Omega,W)$. 
The following simple observation allows us to transfer algebraic identities into identities of vector calculus. 
We have
\begin{equation} \label{eq:calculus_identity}
\forall\ b \in \R^d \quad M(b, \ldots, b) = 0 \quad \Longleftrightarrow
\quad
\forall\ f \in \mathscr{D}'(\Omega,V)  \quad D_M f = 0.
\end{equation}
Indeed, since $\partial_i \partial_j = \partial_j \partial_i$ in the sense of distributions, both assertions
are equivalent to the statement that the symmetrized coefficients of $M$ vanish.
For example, the algebraic identity $\tr ((a \times b) \otimes b) = 0$ for all $a, b \in \R^3$ translates into the identity
$\div \curl f = 0$ for all $f \in \mathscr{D}'(\Omega,\R^3)$. Since $M$ is multilinear we often use the notation
\begin{equation}
M(\nabla, \ldots, \nabla) \coloneqq D_M
\end{equation}
With this notation,  \eqref{eq:calculus_identity} asserts that we  can formally compute as if $\nabla$ was 
a vector in $\R^d$. 

Of special interest is the operator curl  and its row-wise extension to a matrix-valued operator
$\Curl$.  Thus, formally,
\begin{equation}\label{eq:Curl_def}
 \Curl P \coloneqq P\times (-\nabla) = -P\,\Anti(\nabla)
\end{equation}
for $P\in\mathscr{D}'(\Omega,\R^{3\times3})$ where the vector product acts row-wise, cf. \eqref{eq:matrixCurl}. Surely, $\nabla\times P$ or $-\nabla\times P$ would also be interesting candidates to consider, but, among them, only the matrix $\Curl$ from \eqref{eq:Curl_def} kills the derivative of a general vector field $a\in \mathscr{D}'(\Omega,\R^3)$, i.e.,~ $\Curl \D a \equiv0$. For symmetric tensor fields $S\in\mathscr{D}'(\Omega,\Sym(3))$ we obtain by \eqref{eq:trStimesb}
\begin{equation}\label{eq:trCurlS}
 \tr (\Curl S) \equiv 0\,.
\end{equation}
Moreover, it holds for $\zeta\in\mathscr{D}'(\Omega,\R)$ and $a\in\mathscr{D}'(\Omega,\R^3)$
\begin{equation}
 \Curl(\zeta\cdot\id) = -\Anti(\nabla \zeta) \quad\text{and}\quad \Curl \Anti(a) = \div a \cdot\id-(\D a)^T.
\end{equation}
Note in passing that in three dimensions the matrix $\Curl$ returns again a square matrix. Furthermore, we make use of the incompatibility operator
\begin{equation}
 \inc P \coloneqq   \Curl ([\Curl P]^T)=(P\times \nabla)^T\times \nabla\overset{\eqref{eq:alg_inc}}{=}-\nabla\times P^T\times \nabla = -\Anti(\nabla)\,P^T\Anti(\nabla).
\end{equation}
The last expression shows, in particular, that the incompatibility operator preserves symmetry:
\begin{align}
 (\inc P)^T=\inc(P^T), \quad \inc \sym P = \sym \inc P \quad \text{and}\quad \inc\skew P = \skew\inc P\,.
 \label{eq:incsymmetry}
\end{align}
Moreover, the incompatibility operator annihilates the symmetric displacement gradient since
\begin{align}
 \inc(\sym\D a) &= -\nabla\times\sym\D a\times \nabla = -\frac12\nabla\times(\nabla\otimes a + a \otimes \nabla)\times\nabla \notag \\
 & =-\frac12[(\underset{= 0}{\underbrace{\nabla\times\nabla}})\otimes a\times \nabla+\nabla\times a\otimes(\underset{= 0}{\underbrace{\nabla\times\nabla}})] \equiv 0\,.\label{eq:propertyinckill}
 \end{align}
 Note that this formal calculation was already carried out in Lagally's monograph on vector calculus from 1928 \cite[Ziff. 191]{Lagally1928}. The action of the incompatibility operator on spherical tensors and antisymmetric tensors  is given by
\begin{equation}\label{eq:inc_ids}
 \inc(\zeta\cdot\id) = \Delta \zeta\cdot\id -\D^2\zeta\in \Sym(3) \quad \text{and}\quad \inc(\Anti(a))= -\Anti(\nabla\div a)\in\so(3)\,,
\end{equation}
respectively. For symmetric tensor fields $S$ we obtain, formally by \eqref{eq:trdreimalcross}, again
\begin{equation}\label{eq:trincCurlS}
 \tr(\inc\Curl S) \equiv0.
\end{equation}

\begin{remark}
 The incompatibility operator $\inc$ occurs in infinitesimal strain dislocation models, e.g., in the modeling of dislocated crystals  or in the modeling of  elastic materials with dislocations, since the strain cannot be a symmetrized gradient of a vector field as soon as dislocations are present and the notion of incompatibility is at the basis of a new paradigm to describe the inelastic effects, cf. \cite{agn_ebobisse2017fourth,Amstutz2017incompatibility,Lazar2010disclocations,Amstutz2019incompatibility,Amstutz2016analysis,Maggiani2015incompatible}. Furthermore, the equation ~ $\inc\sym e\equiv 0$ ~ is equivalent to the \textit{Saint-Venant compatibility condition(s)}\footnote{Those compatibility conditions can be found in the third appendix \S 32 p. 597 et seq. of the third edition of the lecture notes  \textit{R\'{e}sistance des corps solides} given by Navier and extended with several notes and appendices by Barr\'{e} de Saint-Venant and published as \textit{R\'{e}sum\'{e} des Le\c{c}ons donn\'{e}es \`{a} l'\'{E}cole des Ponts et Chauss\'{e}es sur l'Application de la M\'{e}canique}, vol. I, Paris, 1864. Their coordinate-free version are contained in Lagally's monograph on vector calculus from 1928 \cite[Ziff. 191]{Lagally1928}.} defining the relation between the  displacement vector field $u$  and the symmetric strain $\sym e$, more precisely:
\begin{equation}
\inc \sym e \equiv 0 \quad \Leftrightarrow \quad \sym e = \sym \D u
\end{equation}
in simply connected domains, cf. \cite{Amrouche2006inc,Maggiani2015incompatible}. For investigations over multiply connected domains see e.g.~\cite{Ting1977StVenant,Geymonat2005Venant}.
\end{remark}

\subsection{Linear combinations of higher derivatives}
Our analysis relies on a number of apparently hitherto unnoticed identities which arise from the interaction of the matrix $\Curl$ operator with the algebraic splitting  \eqref{eq:decomposition_matrix}:
$$ X =\dev\sym X + \skew X + \frac13\tr(X)\cdot \id.$$
In particular, we have the following identities.

\begin{lemma} \label{lem:lin_combi_general} Let $A\in\mathscr{D}'(\Omega,\so(3))$. Then
\begin{thmenum}
 \item the entries of $\D^2 A$ are linear combinations of the entries of $\D\, \sym\Curl A$. \label{lin_combi_a}
 \item the entries of $\D^3 A$ are linear combinations of the entries of $\D^2 \dev\sym\Curl A$. \label{lin_combi_b}
 \end{thmenum}
\end{lemma}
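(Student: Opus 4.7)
The plan is to reduce both claims to corresponding statements for the axial vector field $a \coloneqq \axl A \in \mathscr{D}'(\Omega,\R^3)$. Since each entry $A_{ij}$ is a fixed linear combination of the $a_k$'s (with coefficients $0,\pm 1$), every entry of $\D^k A$ is a linear combination of entries of $\D^k a$, so it suffices to prove (a) that entries of $\D^2 a$ are linear combinations of entries of $\D \sym \Curl A$, and (b) that entries of $\D^3 a$ are linear combinations of entries of $\D^2 \dev\sym \Curl A$. Nye's formula \eqref{eq:Nye} reads $\Curl A = \div a \cdot \id - (\D a)^T$; taking symmetric parts and traces yields the two key identities
\begin{equation*}
\sym \Curl A = \div a \cdot \id - \sym \D a \qquad \text{and} \qquad \dev \sym \Curl A = -\dev \sym \D a,
\end{equation*}
together with $\div a = \tfrac{1}{2}\tr \sym \Curl A$.

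For part (a), I would start from the classical Korn identity \eqref{eq:sec_der_id},
\begin{equation*}
\partial_i \partial_j a_k = \partial_j (\sym \D a)_{ik} + \partial_i (\sym \D a)_{jk} - \partial_k (\sym \D a)_{ij},
\end{equation*}
substitute $\sym \D a = \div a \cdot \id - \sym \Curl A$, and use that $\div a$ is $\tfrac{1}{2}$ times the trace of $\sym \Curl A$. Each entry of $\D^2 a$ is then visibly a linear combination of entries of $\D \sym \Curl A$.

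For part (b), differentiating \eqref{eq:sec_der_id} once more expresses $\D^3 a$ linearly in $\D^2 \sym \D a$, and decomposing $\sym \D a = -\dev \sym \Curl A + \tfrac{1}{3} \div a \cdot \id$ reduces the problem to expressing $\D^2 \div a$ linearly in $\D^2 \dev \sym \D a = -\D^2 \dev \sym \Curl A$. Setting $\varepsilon^d \coloneqq \dev \sym \D a$, the row-divergence $\sum_i \partial_i (\sym \D a)_{ik} = \tfrac{1}{2}(\Delta a_k + \partial_k \div a)$ of the Korn identity, combined with $\sym \D a = \varepsilon^d + \tfrac{1}{3} \div a \cdot \id$, produces
\begin{equation*}
\Delta a_k + \tfrac{1}{3} \partial_k \div a = 2 \sum_i \partial_i \varepsilon^d_{ik}.
\end{equation*}
Applying $\partial_j$, symmetrizing in $j \leftrightarrow k$, and then contracting to determine $\Delta \div a = \tfrac{3}{2} \sum_{i,l} \partial_i \partial_l \varepsilon^d_{il}$, I would obtain
\begin{equation*}
\partial_j \partial_k \div a = 3 \sum_i \bigl(\partial_i \partial_j \varepsilon^d_{ik} + \partial_i \partial_k \varepsilon^d_{ij}\bigr) - 3 \Delta \varepsilon^d_{jk} - \tfrac{3}{2} \delta_{jk} \sum_{i,l} \partial_i \partial_l \varepsilon^d_{il}.
\end{equation*}
Inserting this identity, together with the decomposition of $\sym \D a$, into the twice-differentiated Korn identity shows that every entry of $\D^3 a$, and hence of $\D^3 A$, is a linear combination of entries of $\D^2 \dev \sym \Curl A$.

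The essential difficulty is precisely in this last elimination step: because $\dev \sym$ annihilates spherical tensors, one derivative of $\dev \sym \D a$ does not suffice to recover $\D \div a$, which is why (b) necessarily requires two derivatives on $\dev \sym \Curl A$ rather than one. The identity displayed above is the three-dimensional incarnation of the relation $\D \Delta u = L(\D^2 \dev_n \sym \D u)$ underlying the classical trace-free Korn inequality \cite{Dain2006tracefree,Schirra2012tracefreenD}; here it is applied to $u = \axl A$ and translated via Nye's formula into a statement about $\dev \sym \Curl A$.
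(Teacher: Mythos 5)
Your proposal is correct, and it follows the paper's high-level strategy: reduce to $a = \axl A$ via Nye's formula, invoke the classical Korn differential identity \eqref{eq:sec_der_id}, and then recognize that the crux of part (b) is expressing $\D^2 \div a$ in terms of $\D^2 \dev\sym\D a$. Your calculations for part (a), and the key identities $\Delta a_k + \tfrac13 \partial_k \div a = 2\sum_i \partial_i \varepsilon^d_{ik}$, $\Delta \div a = \tfrac32 \sum_{i,l} \partial_i\partial_l\varepsilon^d_{il}$, and the resulting formula for $\partial_j\partial_k\div a$ all check out.

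The one genuine difference is in \emph{how} $\D^2 \div a = L(\D^2 \dev\sym\D a)$ is obtained. The paper applies the incompatibility operator $\inc = -\Anti(\nabla)\,(\cdot)^T\Anti(\nabla)$ to both sides of $\dev\sym\Curl A = \tfrac13\tr(\D a)\,\id - \sym\D a$: since $\inc(\sym\D a)\equiv 0$ \eqref{eq:propertyinckill}, and $\inc(\zeta\cdot\id) = \Delta\zeta\cdot\id - \D^2\zeta$ \eqref{eq:inc_ids}, this kills the symmetric-gradient term in one stroke and isolates $\D^2\tr(\D a) = \D^2\div a$. You instead obtain the same conclusion by explicit row-contraction of the differentiated Korn identity, symmetrization, and a trace to determine $\Delta\div a$. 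Both routes are valid; the paper's $\inc$ trick is conceptually tidier and reuses machinery already developed for the Saint-Venant compatibility discussion, while your argument is more elementary and self-contained, requiring no extra operator beyond $\D$ and its contractions. It is worth noting that your approach is essentially a componentwise rederivation of the fact that $\inc$ annihilates symmetrized gradients, so the two proofs rest on the same underlying cancellation.
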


\begin{proof}
 By Nye's formula \eqref{eq:Nye}$_1$ we have
 \begin{equation}\label{eq:Nye_sym}
  \sym \Curl A = \tr(\D\axl A )\, \id - \sym(\D\axl A).
 \end{equation}
Taking the trace on both sides we obtain \ $\tr(\sym\Curl A) = 2\, \tr(\D\axl A)$ and inserting this identity into \eqref{eq:Nye_sym} we get
\begin{equation}\label{eq:Nye_sym_A}
 \sym (\D\axl A) = \frac12\tr(\sym\Curl A)\corrected{\id}-  \sym \Curl A.
\end{equation}
Moreover, by the relation \eqref{eq:sec_der_id} used for the proof of the classical Korn's inequality, we obtain
\begin{equation}
 \D^2 \axl A \overset{\eqref{eq:sec_der_id}}{=} L(\D\.\sym \D\axl A) \overset{\eqref{eq:Nye_sym_A}}{=} L_1(\D\.\sym \Curl A).
\end{equation}
In other words, the entries of $\D^2 A $ are linear combinations of the entries of $\D \sym \Curl A$ which establishes part \ref{lin_combi_a}.

To prove  \ref{lin_combi_b} we make use of the incompatibility operator $\inc$, since it
 kills the symmetric displacement gradient, cf. \eqref{eq:propertyinckill}. Consider now the deviatoric part on both sides of \eqref{eq:Nye_sym}:
\begin{equation}\label{eq:Nye_devsym}
 \dev \sym \Curl A = \frac13\tr(\D\axl A) \,\id - \sym(\D\axl A)\,.
\end{equation}
Applying $\inc$ on both sides, we obtain in view of \eqref{eq:propertyinckill} and \eqref{eq:inc_ids}:
\begin{equation}
 3\,\inc\dev\sym\Curl A = \Delta \tr(\D\axl A)\cdot \id - \D^2\tr(\D\axl A)
\end{equation}
or, equivalently,
\begin{equation}\label{eq:Nye_dd_trA}
 \D^2 \tr(\D\axl A)=\frac32\tr(\inc\dev\sym\Curl A)\cdot\id - 3\,\inc\dev\sym\Curl A
 = L_2(\D^2 \dev\sym\Curl A),
\end{equation}
where we have used that the entries of $\inc B$ are, of course, linear combinations of the entries of $\D^2B$, so that by \eqref{eq:Nye_devsym} we have
\begin{equation}\label{eq:Nye_dd_sym_A}
 \D^2 \sym (\D\axl A) = L_3(\D^2 \dev\sym\Curl A).
\end{equation}
The conclusion of part \ref{lin_combi_b} then follows using the relation \eqref{eq:sec_der_id}:
\begin{equation*}
 \D^3 \axl A \overset{\eqref{eq:sec_der_id}}{=} L(\D^2 \sym \D\axl A) \overset{\eqref{eq:Nye_dd_sym_A}}{=} L_4(\D^2 \dev\sym\Curl A).\qedhere
\end{equation*}
\end{proof}
The algebraic considerations above provide information on higher derivatives of $P$ in negative Sobolev spaces. To obtain $L^p$-estimates for $P$ we use the following deep result.

\begin{theorem}[Lions lemma and Ne\v{c}as estimate]  \label{th:necas_estimate}
Let $\Omega\subset\R^n$ be a bounded  Lipschitz  domain.  Let $m \in \Z$ and $p \in (1, \infty)$.
Then $f \in  \mathscr{D}'(\Omega,\R^d)$ and $\D f \in W^{m-1,\,p}(\Omega,\R^{d\times n})$ imply
$f \in W^{m,\,p}(\Omega,\R^d)$.
Moreover,
\begin{equation}  \label{eq:necas_m_p}
 \norm{ f}_{W^{m,\,p}(\Omega,\R^d)} \le c\,\left(\norm{ f}_{W^{m-1,\,p}(\Omega,\R^d)} + \norm{ \D f }_{W^{m-1,\,p}(\Omega,\R^{d\times n})}\right),
\end{equation}
with a constant $c=c(m,p,n,d,\Omega)>0$.
\end{theorem}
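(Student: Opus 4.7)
The plan is to establish the estimate in three regimes: the case $m=0$ (the classical Ne\v{c}as lemma) as the analytic core, an upward step for $m\ge 1$, and a duality argument for $m\le -1$ based on a right-inverse of the divergence on $\Omega$.

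The heart of the matter is $m=0$, i.e.,~the bound
\[
\norm{f}_{L^p(\Omega,\R^d)} \le c\left(\norm{f}_{W^{-1,\,p}(\Omega,\R^d)} + \norm{\D f}_{W^{-1,\,p}(\Omega,\R^{d\times n})}\right).
\]
The standard route is to use a smooth partition of unity subordinate to a finite open cover of $\overline\Omega$ on which the boundary is the graph of a Lipschitz function, flatten each boundary patch by a bilipschitz change of variables, and reduce the problem to an estimate on a half-space. On a half-space the bound can be obtained by Fourier analysis (or by writing $f$ as a Bessel potential of a reflected extension and invoking the $L^p$-continuity of Calder\'{o}n--Zygmund operators). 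The partition of unity then reassembles the local bounds, with commutator terms involving $\D\eta_i$ absorbed into the $W^{-1,\,p}$-norm of $f$; a density argument extends the estimate to every $f\in\mathscr{D}'(\Omega,\R^d)$ with $\D f\in W^{-1,\,p}$.

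For $m\ge 1$ the statement follows at once: since $\D f\in W^{m-1,\,p}\subset W^{-1,\,p}$, the base case gives $f\in L^p$, and then the definition of higher-order Sobolev spaces combined with $\D f\in W^{m-1,\,p}$ yields $f\in W^{m,\,p}$, with the quantitative estimate obtained by iterating the base inequality. For $m\le -1$ I would argue by duality. Representing
\[
\norm{f}_{W^{m,\,p}(\Omega,\R^d)} = \sup_\phi\,|\langle f,\phi\rangle|
\]
with $\phi$ ranging over the unit ball of $W^{-m,\,p'}_0(\Omega,\R^d)$, fix once and for all a cutoff $\chi\in C_c^\infty(\Omega)$ with $\int_\Omega \chi\,\intd{x}=1$ and decompose $\phi = \phi_0 + \chi\,\bar\phi$, where $\bar\phi\in\R^d$ is the integral mean of $\phi$ and $\phi_0$ has vanishing componentwise mean. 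A Bogovskii-type right inverse of the divergence on $\Omega$ then produces $G\in W^{1-m,\,p'}_0(\Omega,\R^{d\times n})$ with $\Div G=\phi_0$ and $\norm{G}_{W^{1-m,\,p'}_0}\le c\,\norm{\phi_0}_{W^{-m,\,p'}_0}$. Integration by parts gives $\langle f,\phi_0\rangle=-\langle \D f,G\rangle$, which is controlled by $\norm{\D f}_{W^{m-1,\,p}}$, while the finite-dimensional remainder $\langle f,\chi\,\bar\phi\rangle$ is controlled by $\norm{f}_{W^{m-1,\,p}}$; a downward induction in $m$ then covers all negative integers.

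The main obstacle is the base case $m=0$ on a Lipschitz domain: the bilipschitz flattening of $\partial\Omega$ destroys smoothness, so the $L^p$-continuity of the underlying singular integrals must be stable under Lipschitz changes of coordinates, which on general Lipschitz domains draws on the Calder\'{o}n--Zygmund theory of singular integrals on Lipschitz graphs. The duality step for negative $m$ is in turn only as hard as constructing a Bogovskii operator on $\Omega$ with the correct mapping properties on Sobolev spaces of arbitrary order --- classical but nontrivial, and ultimately resting on the same Calder\'{o}n--Zygmund machinery.
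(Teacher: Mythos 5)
The paper does not prove Theorem~\ref{th:necas_estimate} itself; it refers to \cite{AG90,BS90,Ne66,Cattabriga61} and points to the surveys \cite{Ciarlet2010,ACM2015} for the history. So there is no in-house argument to compare against, and your proposal has to be judged on its own. The outline you give is essentially the standard route found in those references: $m=0$ by a partition of unity, boundary flattening and half-space Fourier or singular-integral analysis; $m\geq1$ as an essentially immediate consequence once the qualitative statement is available; $m\leq-1$ by duality via a Bogovskii right inverse of the divergence. Two remarks.

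First, the one step you dispose of too lightly is the qualitative Lions part: that $f\in\mathscr{D}'(\Omega)$ with $\D f\in W^{-1,\,p}(\Omega)$ already implies $f\in L^p(\Omega)$. You appeal to ``a density argument,'' but before one knows $f$ lives in a normed space there is nothing to approximate, and the a~priori estimate alone does not give membership. The standard repair is: the interior estimate yields $f\in L^p_{\mathrm{loc}}(\Omega)$; then one uses the Lipschitz cone property to translate $f$ inward near each boundary patch, mollify, apply the $m=0$ estimate to the smooth approximants, and pass to the limit. This is precisely where the Lipschitz geometry enters in an essential way (beyond the boundary flattening), and it is the step the paper's citation of \cite{ACM2015} flags as historically delicate.

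Second, two small points in the duality step. If $\int_\Omega\chi\,\intd{x}=1$ and $\bar\phi$ is the integral \emph{mean} of $\phi$, then $\phi-\chi\bar\phi$ does not have vanishing integral unless $|\Omega|=1$; take $\bar\phi=\int_\Omega\phi\,\intd{x}$ instead (or normalize $\int_\Omega\chi=|\Omega|$). And no downward induction is needed: once one has a Bogovskii operator $B\colon W^{k,\,p'}_0(\Omega)\cap L^{p'}_0(\Omega)\to W^{k+1,\,p'}_0(\Omega,\R^n)$ with $\Div\circ B=\mathrm{id}$ for all $k\geq1$ (which a bounded Lipschitz domain supports, since it is a finite union of star-shaped domains), the case $m\leq-1$ follows in one shot. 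Both are cosmetic; the overall architecture is sound and is the one the cited literature uses.
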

For a proof we refer to \cite[Proposition 2.10 and Theorem 2.3]{AG90} and \cite{BS90}. However, for our discussions the heart of the matter is the estimate  \eqref{eq:necas_m_p}, see  Ne\v{c}as \cite[Th\'{e}or\`{e}me 1]{Ne66}. The case $m=0$ is already contained in \cite{Cattabriga61}; for an alternative proof, see \cite[Lemma 11.4.1]{MW2012bvpStokes} and \cite[Chapter IV]{BF2013} as well as \cite{Bramble2003} and \cite{Villani-tangentBCs}.   For further historical remarks, see the discussions in \cite{Ciarlet2010,ACM2015} and the references contained therein. 

Since we only have information on higher order derivatives of $P$ we will use the following 
consequence of  Theorem~\ref{th:necas_estimate}.

\begin{corollary}\label{cor:LionsNecas_k}
 Let $\Omega\subset\R^n$ be a bounded  Lipschitz  domain, $m \in \Z$ and $p \in (1, \infty)$. Denote by  $\D^k f$ the collection of all distributional derivatives of order $k$. Then $f \in  \mathscr{D}'(\Omega,\R^d)$ and $\D^k f \in W^{m-k,\,p}(\Omega,\R^{d\times n^k})$ imply
$f \in W^{m,\,p}(\Omega,\R^d)$.
Moreover, 
\begin{equation}  \label{eq:necask_m_p}
 \norm{ f}_{W^{m,\,p}(\Omega,\R^d)} \le c\,\left(\norm{ f}_{W^{m-1,\,p}(\Omega,\R^d)} +
 \norm{ \D^k f }_{W^{m-k,\,p}(\Omega,\R^{d\times n^k})}\right),
\end{equation}
with a constant $c=c(m,p,n,d,\Omega)>0$.
\end{corollary}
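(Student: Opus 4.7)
The plan is to argue by induction on $k\ge 1$, using Theorem~\ref{th:necas_estimate} as the base case and reducing each inductive step to a single application of the base case at the level of $\D f$.

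The base case $k=1$ is exactly Theorem~\ref{th:necas_estimate}. For the inductive step, I assume the corollary has already been established for derivative order $k-1$ (with arbitrary integer target index $m\in\Z$) and apply it to $g\coloneqq\D f\in\mathscr{D}'(\Omega,\R^{d\times n})$. Since the components of $\D^{k-1}g$ are just a reorganization of those of $\D^k f$, the hypothesis $\D^k f\in W^{m-k,\,p}$ translates into $\D^{k-1}g\in W^{(m-1)-(k-1),\,p}$. The induction hypothesis with target index $m-1$ therefore gives $g\in W^{m-1,\,p}(\Omega,\R^{d\times n})$ together with
\begin{equation*}
\norm{g}_{W^{m-1,\,p}}\le c\,(\norm{g}_{W^{m-2,\,p}}+\norm{\D^{k-1}g}_{W^{m-k,\,p}}).
\end{equation*}
Since $\D f=g\in W^{m-1,\,p}$, a second and final application of Theorem~\ref{th:necas_estimate} (now with target index $m$) produces $f\in W^{m,\,p}(\Omega,\R^d)$ and
\begin{equation*}
\norm{f}_{W^{m,\,p}}\le c\,(\norm{f}_{W^{m-1,\,p}}+\norm{\D f}_{W^{m-1,\,p}}).
\end{equation*}
Chaining the two inequalities and absorbing $\norm{g}_{W^{m-2,\,p}}=\norm{\D f}_{W^{m-2,\,p}}\le c\,\norm{f}_{W^{m-1,\,p}}$ (immediate from the definition of the Sobolev norm) yields exactly \eqref{eq:necask_m_p}.

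The argument is essentially mechanical — no new algebraic identities or compactness arguments enter beyond those already built into Theorem~\ref{th:necas_estimate}. The only mild point of care is the bookkeeping of Sobolev indices: at each layer the regularity index must decrease by $1$ when differentiating once, so that $m-1$ plays the role of $m$ and $k-1$ the role of $k$ inside the inductive step, and consequently the right-hand side exponents land correctly at $W^{m-k,\,p}$ rather than drifting away from the assumed integrability class of $\D^k f$.
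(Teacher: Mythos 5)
Your proof is correct and follows essentially the same route as the paper: the paper states that the assertion follows by inductive application of Theorem~\ref{th:necas_estimate} to $\D^l f$ for $l=k-1,k-2,\ldots,0$, which is just your induction on $k$ unrolled. Your explicit bookkeeping of the index shift $m\mapsto m-1$, $k\mapsto k-1$ and the absorption of $\norm{\D f}_{W^{m-2,p}}$ by $\norm{f}_{W^{m-1,p}}$ (which holds for all $m\in\Z$ since differentiation lowers the Sobolev index by one also on negative-order spaces) is exactly the content the paper leaves implicit.
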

\begin{proof}
The assertion $f \in W^{m,\,p}(\Omega,\R^d)$ and the estimate \eqref{eq:necask_m_p} follow by inductive application of Theorem \ref{th:necas_estimate} to $\D^l f$ with $l=k-1,k-2,\ldots,0$.
\end{proof}

\begin{lemma}\label{lem:kern}
 \label{obs:kernels_k} Let $A\in L^p(\Omega,\so(3))$. Then
\begin{thmenum}
 \item\label{kernel_a} $\sym\Curl A \equiv 0$ in the distributional sense if and only if $A=\Anti(\widetilde{A}\,x+b)$ almost everywhere in $\Omega$,
 \item\label{kernel_b} $\dev\sym\Curl A \equiv 0$ in the distributional sense if and only if $A= \Anti\big(\widetilde{A}\,x+b+\beta\, x +\skalarProd{d}{x}\,x -\frac12d\norm{x}^2\big)$ almost everywhere in $\Omega$
\end{thmenum}
with constant $\widetilde{A}\in\so(3)$, $b,d\in\R^3$ and $\beta,\gamma\in\R$.
\end{lemma}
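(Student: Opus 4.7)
My plan is to combine the higher-order differential identities of Lemma~\ref{lem:lin_combi_general} with Corollary~\ref{cor:LionsNecas_k} in order to upgrade the distributional hypotheses on $A \in L^p(\Omega,\so(3))$ to the pointwise conclusion that $A$ is a polynomial of bounded degree. The precise form of the coefficients is then extracted algebraically from Nye's formula.

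\textbf{Part \ref{kernel_a}.} Assume $\sym\Curl A \equiv 0$ in $\mathscr{D}'(\Omega,\Sym(3))$. By Lemma~\ref{lin_combi_a} we have $\D^2 A = L(\D\sym\Curl A) = 0$ distributionally. Since $A \in L^p(\Omega,\so(3))$, Corollary~\ref{cor:LionsNecas_k} with $k=2$ (applied on each connected component, or on an exhausting sequence of smooth subdomains, together with mollification) implies that $A$ coincides a.e.\ with an affine function, so $\axl A(x) = Mx + b$ for some $M \in \R^{3\times 3}$ and $b \in \R^3$. Inserting this into Nye's formula \eqref{eq:Nye_sym} gives
\[
0 = \sym\Curl A = \tr(M)\,\id - \sym M.
\]
Taking the trace on both sides yields $\tr M = 3\tr M$, hence $\tr M = 0$ and consequently $\sym M = 0$, i.e.\ $M =: \widetilde{A} \in \so(3)$. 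This establishes the claim, and the reverse implication is a direct computation.

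\textbf{Part \ref{kernel_b}.} Assume $\dev\sym\Curl A \equiv 0$. By Lemma~\ref{lin_combi_b} we have $\D^3 A = L(\D^2\dev\sym\Curl A) = 0$ distributionally, and Corollary~\ref{cor:LionsNecas_k} with $k=3$ shows that $A$ agrees a.e.\ with a polynomial of degree at most two. Hence $a \coloneqq \axl A$ is an $\R^3$-valued quadratic polynomial. Using \eqref{eq:Nye_devsym}, the hypothesis becomes
\[
0 = \dev\sym\Curl A = \tfrac{1}{3}\tr(\D a)\,\id - \sym(\D a) = -\dev\sym\D a,
\]
i.e.\ $a$ satisfies the conformal Killing equation in dimension three. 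The classical characterization \eqref{eq:infinitesimalconfis} of infinitesimal conformal mappings then gives
\[
\axl A(x) = \langle d, x \rangle\, x - \tfrac{1}{2} d\,\norm{x}^2 + \widetilde{A}\, x + \beta\, x + b
\]
for some $\widetilde{A} \in \so(3)$, $b, d \in \R^3$, $\beta \in \R$, which is exactly the claimed form. The converse is again immediate from the two identities just used.

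The main technical step is the passage from \emph{distributional} vanishing of $\D^2 A$ or $\D^3 A$ for a merely $L^p$ field $A$ to the \emph{pointwise} polynomial representative; this is precisely the content of Corollary~\ref{cor:LionsNecas_k} (equivalently, of the Weyl-type consequence of the Lions lemma). Once that regularity upgrade is in place, everything else reduces to inspecting Nye's formula and invoking the known classification of conformal Killing fields on $\R^3$.
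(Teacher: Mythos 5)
Your proof is correct, and for part~\ref{kernel_b} it takes the route the paper itself flags in Remark~\ref{rem:confi} rather than the one used in the main body of the proof. The paper's main argument avoids citing the conformal Killing classification \eqref{eq:infinitesimalconfis}: it applies the identity \eqref{eq:Nye_dd_trA} to conclude that $\tr(\D\axl A)$ is affine, builds the explicit auxiliary field $\bar a$, verifies $\sym\D(\axl A-\bar a)\equiv0$, and reduces to the rigid-motion kernel via \eqref{eq:sec_der_id} — in effect re-deriving the conformal Killing classification from scratch. You instead feed the vanishing of $\dev\sym\Curl A$ into Lemma~\ref{lem:lin_combi_general}~\ref{lin_combi_b} to get $\D^3 A\equiv 0$, upgrade regularity to conclude $A$ is a quadratic polynomial, observe from \eqref{eq:Nye_devsym} that $\dev\sym\Curl A = -\dev\sym\D\axl A$, and then invoke the known classification \eqref{eq:infinitesimalconfis}. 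The two proofs are genuinely equivalent; yours is shorter but less self-contained, while the paper's is longer but internally complete and in passing re-establishes \eqref{eq:infinitesimalconfis}. One thing you make explicit that the paper only implicitly invokes is the regularity upgrade from ``$\D^k A=0$ distributionally'' to ``$A$ is a polynomial a.e.''; your call to Corollary~\ref{cor:LionsNecas_k} is more machinery than needed (the elementary distributional fact that a distribution on a connected open set with vanishing $k$-th derivatives is a polynomial of degree $<k$ already suffices, and avoids worries about Lipschitz regularity of $\Omega$), but it is certainly valid here since $\Omega$ is a bounded Lipschitz domain. Your part~\ref{kernel_a} likewise reorders the paper's steps — going through $\D^2A\equiv0$ and then determining $M\in\so(3)$ from Nye's formula — instead of the paper's more direct passage via \eqref{eq:Nye_sym_A} to $\sym\D\axl A\equiv0$, but both are fine.
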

\begin{remark}
 It is seen already from the calculations of the kernels that there can not be corresponding Korn type inequalities in terms of \ $\norm{\dev\sym P} +\norm{\sym\Curl P}$ \ or $\norm{\dev\sym P} +\norm{\dev\sym\Curl P}$. \ The kernels would be infinite-dimensional, since all restricting information on $\zeta$ would get lost. Indeed, we have
 \[
  \sym\Curl(A+\zeta\cdot\id)\overset{\eqref{eq:Nye}}{=} \tr(\D\axl A)\id - \sym(\D\axl A)
 \]
so that \ $\sym\Curl(A+\zeta\cdot\id)\equiv0 $ \ or \ $\dev\sym\Curl(A+\zeta\cdot\id)\equiv0 $ \ allow $\zeta$ to be arbitrary.
\end{remark}

\begin{remark}
 Solutions of Lemma \ref{lem:kern} have already been partially indicated in the literature, cf. \cite{Reshetnyak1970, agn_bauer2013dev}. We include their full deduction here for the convenience of the reader.
 \end{remark}
\begin{proof}[Proof of Lemma \ref{lem:kern}]
The ``if"-parts follow from a direct calculation using  Nye's formula \eqref{eq:Nye}:
\begin{thmenum}
 \item $\Curl(\Anti(\widetilde{A}\,x+b)) = \widetilde{A} $,
 \item $\D(\Anti\big(\widetilde{A}\,x+b+\beta\, x +\skalarProd{d}{x}\,x -\frac12d\norm{x}^2\big)) = \widetilde{A}+\beta\,\id +\skalarProd{d}{x}\id+x\otimes d-d\otimes x = (\beta+\skalarProd{d}{x})\id+\widetilde{A}+\Anti(d\times x)$, hence,  $\Curl(\Anti\big(\widetilde{A}\,x+b+\beta\, x +\skalarProd{d}{x}\,x -\frac12d\norm{x}^2\big)) = 2(\beta+\skalarProd{d}{x})\id +\widetilde{A}+\Anti(d\times x)$,
 \end{thmenum}
Now, we will focus on the ``only if''-directions.

By \eqref{eq:Nye_sym_A} the condition \ $\sym\Curl A \equiv 0$ \ implies \ $\sym (\D\axl A) \equiv0$, \ so that  the usual calculation for Korn's inequality, cf. \eqref{eq:sec_der_id}, gives that \ $\D\axl A$ \ must be a constant skew-symmetric matrix. Thus,  $$A=\Anti(\widetilde{A}\,x+b)$$ for some $\widetilde{A}\in\so(3)$ and $b\in\R^3$, which establishes \ref{kernel_a}.\medskip

Considering now \ $\dev\sym\Curl A \equiv 0$ \ we obtain by \eqref{eq:Nye_dd_trA} that $\D^2 \tr(\D\axl A)\equiv 0$. Hence,
\begin{equation}\label{eq:tr_axl_a_1}
 \frac13\tr(\D\axl A) = \beta +\skalarProd{d}{x}
\end{equation}
for some $d\in\R^3$ and $\beta\in\R$. Define $\bar{a}$ by
\begin{equation}\label{eq:_hilfs_a_bar_1}
\bar{a}(x)= \beta\,x +\skalarProd{d}{x}x-\frac12d\,\norm{x}^2.
 \end{equation}
Then
$$
\D\bar{a}= \beta\,\id+\skalarProd{d}{x}\,\id+x\otimes d- d\otimes x = (\beta+\skalarProd{d}{x})\,\id + \Anti(d\times x)
$$
and
\begin{equation}\label{eq:abl_bar_a}
 \sym \D \bar{a}= (\beta+\skalarProd{d}{x})\,\id.
\end{equation}
Thus, by \eqref{eq:Nye_devsym} we have
\[
\sym(\D\,(\axl A -\bar{a})) \overset{\eqref{eq:abl_bar_a}}{=} \sym(\D\axl A)-(\beta+\skalarProd{d}{x})\,\id  \overset{\eqref{eq:Nye_devsym}}{=} \frac13\tr(\D\axl A)\corrected{\id} -(\beta+\skalarProd{d}{x})\,\id \overset{\eqref{eq:tr_axl_a_1}}{=}  0\,.
\]
Again, \eqref{eq:sec_der_id} gives that \ $\D\,(\axl A-\bar{a})$ \ must be a constant skew-symmetric matrix and we have
\[
\axl A = \widetilde{A}x + b + \bar{a},
\]
for some $\widetilde{A}\in\so(3)$ and $b\in\R^3$, and statement \ref{kernel_b} follows from the representation \eqref{eq:_hilfs_a_bar_1}.
\end{proof}

\begin{remark}\label{rem:confi}
 The conclusion of \ref{kernel_b} also follows directly from Nye's formula and is connected to infinitesimal conformal maps. Indeed, we have
 \begin{equation}
 \dev\sym\Curl\Anti(a) \overset{\eqref{eq:Nye}_1}{=}-\dev\sym \D a,
 \end{equation}
 so that
 \begin{equation}
  \dev\sym\Curl\Anti(a) \equiv 0 \quad \Leftrightarrow\quad \dev\sym \D a \equiv 0 \quad \Leftrightarrow \quad a=\varphi_C,
 \end{equation}
denoting by $\varphi_C$ an infinitesimal conformal map, so that the expression in \ref{kernel_b} of Lemma \ref{lem:kern} follows from the expression for infinitesimal conformal maps \eqref{eq:infinitesimalconfis}.
\end{remark}

\section{New incompatible Korn type inequalities}

\begin{lemma}\label{lem:basic3}
  Let $\Omega \subset \R^3$ be a bounded Lipschitz domain, $1<p<\infty$ and $P\in\mathscr{D}'(\Omega,\R^{3\times3})$. Then either of the conditions
  \begin{thmenum}
   \item $\sym P\in L^p(\Omega,\R^{3\times3})$ and $\sym\Curl P \in W^{-1,\,p}(\Omega,\R^{3\times3})$,\label{condition_a}
   \item $\sym P\in L^p(\Omega,\R^{3\times3})$ and $\dev\sym\Curl P \in W^{-1,\,p}(\Omega,\R^{3\times3})$,\label{condition_b}
   \end{thmenum}
implies $P\in L^p(\Omega,\R^{3\times3})$. Moreover, we have the estimates
\begin{subequations}\label{eq:ausmLemma}
\begin{align}
  \norm{P}_{L^p(\Omega,\R^{3\times3})} &\leq c\, \Big(\norm{\skew P}_{W^{-1,\,p}(\Omega,\R^{3\times3})}\notag\\
  &\hspace{5em}+ \norm{\sym P}_{L^p(\Omega,\R^{3\times3})}+ \norm{\sym \Curl P }_{W^{-1,\,p}(\Omega,\R^{3\times3})}\Big),\label{eq:from_a}\\
  \norm{P}_{L^p(\Omega,\R^{3\times3})} &\leq c\, \Big(\norm{\skew P}_{W^{-1,\,p}(\Omega,\R^{3\times3})}\notag\\
  &\hspace{5em}+ \norm{\sym P}_{L^p(\Omega,\R^{3\times3})}+ \norm{\dev \sym \Curl P }_{W^{-1,\,p}(\Omega,\R^{3\times3})}\Big),\label{eq:from_b}
  \end{align}
  \end{subequations}
  always with a constant $c=c(p,\Omega)>0$.
\end{lemma}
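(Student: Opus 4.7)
The strategy is the standard one for this family of Korn inequalities: split $P = \sym P + A$ with $A \coloneqq \skew P$, and reduce the problem to showing $A \in L^p(\Omega,\so(3))$ with a suitable estimate. Since $\sym P$ is already assumed to be in $L^p$, the contribution $\|\sym P\|_{L^p}$ can simply be carried along, and the full statement will follow from the triangle inequality $\norm{P}_{L^p} \le \norm{\sym P}_{L^p} + \norm{A}_{L^p}$.

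First I would compute how $\sym\Curl A$ (respectively $\dev\sym\Curl A$) is controlled by the data. Writing
\[
\sym\Curl A = \sym\Curl P - \sym\Curl(\sym P),
\qquad
\dev\sym\Curl A = \dev\sym\Curl P - \dev\sym\Curl(\sym P),
\]
and noting that $\Curl$ is a first-order differential operator with constant coefficients, one gets
\[
\norm{\sym\Curl A}_{W^{-1,p}} \le \norm{\sym\Curl P}_{W^{-1,p}} + c\,\norm{\sym P}_{L^p},
\]
and similarly for $\dev\sym\Curl A$. Thus under \ref{condition_a} we have $\sym\Curl A \in W^{-1,p}$ and under \ref{condition_b} we have $\dev\sym\Curl A \in W^{-1,p}$, in each case with the appropriate quantitative bound.

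Next I would invoke Lemma \ref{lem:lin_combi_general} to pass to higher derivatives of $A$. In case \ref{condition_a}, part \ref{lin_combi_a} yields that $\D^2 A$ is a constant-coefficient linear combination of entries of $\D\sym\Curl A$, so
\[
\norm{\D^2 A}_{W^{-2,p}} \le c\,\norm{\D\sym\Curl A}_{W^{-2,p}} \le c\,\norm{\sym\Curl A}_{W^{-1,p}}.
\]
In case \ref{condition_b}, part \ref{lin_combi_b} yields $\D^3 A$ as a linear combination of entries of $\D^2 \dev\sym\Curl A$, hence
\[
\norm{\D^3 A}_{W^{-3,p}} \le c\,\norm{\D^2\dev\sym\Curl A}_{W^{-3,p}} \le c\,\norm{\dev\sym\Curl A}_{W^{-1,p}}.
\]

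Finally I would apply Corollary \ref{cor:LionsNecas_k} to $f = A$: in case \ref{condition_a} with $k=2$, $m=0$ and in case \ref{condition_b} with $k=3$, $m=0$. The corollary both promotes $A$ from a mere distribution to $L^p(\Omega,\R^{3\times 3})$ and delivers the estimate
\[
\norm{A}_{L^p} \le c\left(\norm{A}_{W^{-1,p}} + \norm{\D^k A}_{W^{-k,p}}\right).
\]
Using $\norm{A}_{W^{-1,p}} = \norm{\skew P}_{W^{-1,p}}$ and chaining with the bounds from the previous two steps then gives precisely \eqref{eq:from_a} and \eqref{eq:from_b}, and adding $\norm{\sym P}_{L^p}$ via the triangle inequality proves the lemma. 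There is no real obstacle here since Lemma \ref{lem:lin_combi_general} is exactly the algebraic input needed and Corollary \ref{cor:LionsNecas_k} provides the analytic regularity jump; the only thing to verify carefully is that the negative-order norms of $\sym\Curl(\sym P)$ and $\dev\sym\Curl(\sym P)$ are controlled by $\norm{\sym P}_{L^p}$, which is immediate from the fact that $\Curl$ maps $L^p$ continuously into $W^{-1,p}$.
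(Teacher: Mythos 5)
Your proposal is correct and follows essentially the same route as the paper: decompose $P = \sym P + \skew P$, show that $\D^2\dev\sym\Curl\skew P$ (resp.\ $\D\sym\Curl\skew P$) lies in the appropriate negative Sobolev space, invoke Lemma~\ref{lem:lin_combi_general} to control $\D^3\skew P$ (resp.\ $\D^2\skew P$), and finish with Corollary~\ref{cor:LionsNecas_k}. The only cosmetic difference is that the paper notes that condition~\ref{condition_b} is weaker and proves only estimate~\eqref{eq:from_b}, from which~\eqref{eq:from_a} follows, whereas you run both cases in parallel; the substance is identical.
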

\begin{remark}
Clearly,  condition  \ref{condition_b} is weaker than condition  \ref{condition_a} and \eqref{eq:from_b} implies \eqref{eq:from_a}. Furthermore, \eqref{eq:from_b} implies the estimate
  \begin{equation}
 \begin{split}
   \norm{P}_{L^p(\Omega,\R^{3\times3})} &\leq c\, \Big(\norm{\skew P}_{W^{-1,\,p}(\Omega,\R^{3\times3})}\\
  &\hspace{5em}+ \norm{\sym P}_{L^p(\Omega,\R^{3\times3})}+ \norm{\Curl P }_{W^{-1,\,p}(\Omega,\R^{3\times3})}\Big)
  \end{split}
 \end{equation}
 in   \cite[Lemma 3.1]{agn_lewintan2019KornLp} as well as the estimate
 \begin{equation}
 \begin{split}
   \norm{P}_{L^p(\Omega,\R^{3\times3})} &\leq c\, \Big(\norm{\skew P}_{W^{-1,\,p}(\Omega,\R^{3\times3})}\\
  &\hspace{5em}+ \norm{\sym P}_{L^p(\Omega,\R^{3\times3})}+ \norm{\dev\Curl P }_{W^{-1,\,p}(\Omega,\R^{3\times3})}\Big)
  \end{split}
 \end{equation}
 in \cite[Lemma 3.6]{agn_lewintan2020KornLp_tracefree},
 but not the estimate
 \begin{equation}
 \begin{split}
   \norm{P}_{L^p(\Omega,\R^{3\times3})} &\leq c\, \Big(\norm{\skew P+\textstyle\frac13\tr P\cdot \id}_{W^{-1,\,p}(\Omega,\R^{3\times3})}\\
  &\hspace{5em}+ \norm{\dev\sym P}_{L^p(\Omega,\R^{3\times3})}+ \norm{\dev\Curl P }_{W^{-1,\,p}(\Omega,\R^{3\times3})}\Big)
  \end{split}
 \end{equation}
in  \cite[Lemma 3.6]{agn_lewintan2020KornLp_tracefree} which uses $\dev \sym P$ rather than $\sym P$ on the right hand side. The point is, that we cannot improve  \eqref{eq:ausmLemma} to an estimate which involves $\dev \sym P$ instead of $\sym P$ on the right hand side, cf.~Remark \ref{Rem:keineNorm}.
 \end{remark}

\begin{proof}[Proof of Lemma \ref{lem:basic3}]
By the previous remark it suffices to establish the assertion $P \in L^p(\Omega; \R^{3 \times 3})$ under condition \ref{condition_b} and to prove the estimate  \eqref{eq:from_b}.

We will follow the same line of reasoning as in the proof of \cite[Lemma 3.1]{agn_lewintan2019KornLp} and start by considering the orthogonal decomposition
  \begin{align*}
  P&=\sym P+\skew P.
\end{align*}
To deduce $\skew P \in L^p(\Omega,\R^{3\times3})$ under assumption \ref{condition_b} we consider
\begin{align}\label{eq:D2curlskew}
\norm{    \D^2\dev\sym\Curl \skew P}_{ W^{-3,\,p}(\Omega,\R^{3\times3^3})    }    
& \leq c\, \norm{\dev\sym\Curl (P-\sym P)}_{ W^{-1,\,p}(\Omega,\R^{3\times3})}\notag\\
&\le c\,(\norm{\dev\sym\Curl P}_{ W^{-1,\,p}(\Omega,\R^{3\times3})} + \norm{\Curl\sym P}_{ W^{-1,\,p}(\Omega,\R^{3\times3})}) \notag \\
&\leq c \,(\norm{\dev\sym\Curl P}_{ W^{-1,\,p}(\Omega,\R^{3\times3})} + \norm{\sym P}_{ L^p(\Omega,\R^{3\times3})}).
\end{align}
Hence, \  $\D^2\dev\sym\Curl \skew P\in W^{-3,\,p}(\Omega,\R^{3\times3^3})$  
and it follows from Lemma \ref{lem:lin_combi_general} \ref{lin_combi_b} that 
\begin{equation}
 \D^3 \skew P\in W^{-3,\,p}(\Omega,\R^{3\times3^4}).
 \end{equation}
Now, we apply Corollary \ref{cor:LionsNecas_k} to $\skew P$ and we deduce that  $\skew P\in L^p(\Omega,\R^{3\times3})$ and
\begin{align}\label{eq:Norm_b}
 \norm{\skew P}_{L^p(\Omega,\R^{3\times 3})}&\le\quad  c\, (\norm{\skew P}_{ W^{-1,\,p}(\Omega,\R^{3\times3})} + 
 \norm{\D^3\skew P}_{ W^{-3,\,p}(\Omega,\R^{3\times3^4})}) \notag \\
 &\overset{\mathclap{\text{Lem. \ref{lem:lin_combi_general} \ref{lin_combi_b}}}}{\leq}\quad c\, ( \norm{\skew P}_{ W^{-1,\,p}(\Omega,\R^{3\times 3})} +  \norm{\D^2\dev\sym\Curl \skew P}_{ W^{-3,\,p}(\Omega,\R^{3\times3^3})} \notag \\
 &\overset{\mathclap{\eqref{eq:D2curlskew}}}{\leq}\quad c\, ( \norm{\skew P}_{ W^{-1,\,p}(\Omega,\R^{3\times3})}  \\
 &\hspace{5em}+\norm{\sym P}_{ L^p(\Omega,\R^{3\times3})}+\norm{\dev\sym\Curl P}_{ W^{-1,\,p}(\Omega,\R^{3\times3})}). \notag \qedhere
\end{align}
\end{proof}

The rigidity results follow by eliminating the corresponding first term on the right-hand side of \eqref{eq:ausmLemma}.
\begin{theorem}\label{thm:main1_k_new}
 Let $\Omega \subset \R^3$ be a bounded Lipschitz domain and $1<p<\infty$. There exists a constant $c=c(p,\Omega)>0$ such that
  for all $P\in  L^{p}(\Omega,\R^{3\times3})$
 \begin{subequations}
\begin{thmenum}
 \item \label{rigid_a}
 \begin{equation}\label{eq:rigid_Korn_Lp_sC}
   \inf_{T\in K_{S,SC}}\norm{P-T}_{L^p(\Omega,\R^{3\times3})}\leq c\,\left(\norm{\sym P }_{L^p(\Omega,\R^{3\times3})}+ \norm{\sym \Curl P }_{W^{-1,\,p}(\Omega,\R^{3\times3})}\right)
 \end{equation}
  \item \label{rigid_b}
 \begin{equation}\label{eq:rigid_Korn_Lp_dsC}
   \inf_{T\in K_{S,dSC}}\norm{P-T}_{L^p(\Omega,\R^{3\times3})}\leq c\,\left(\norm{\sym P }_{L^p(\Omega,\R^{3\times3})}+ \norm{\dev\sym \Curl P }_{W^{-1,\,p}(\Omega,\R^{3\times3})}\right)
   \end{equation}
 \end{thmenum}
 \end{subequations}
 where the kernels are given by
 \begin{subequations}
 \begin{align}
   K_{S,SC} &= \{T:\Omega\to\R^{3\times3} \mid  T(x)=\Anti(\widetilde{A}\,x+b), \ \widetilde{A}\in\so(3), b\in\R^3\}, \label{eq:kernel_SSC} \\
    K_{S,dSC} &= \{T:\Omega\to\R^{3\times3} \mid   T(x)=\Anti\big(\widetilde{A}\,x+\beta\, x+b +\skalarProd{d}{x}\,x -\frac12d\norm{x}^2\big), \notag \\ &\hspace{20em} \widetilde{A}\in\so(3), b,d\in\R^3, \beta\in\R\}, \label{eq:kernel_SdSC}
 \end{align}
 \end{subequations}
\end{theorem}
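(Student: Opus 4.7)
The plan is to argue by contradiction, using the compact Rellich embedding $L^p(\Omega) \Subset W^{-1,p}(\Omega)$ in combination with Lemma \ref{lem:basic3}. I focus on part \ref{rigid_b}; part \ref{rigid_a} then follows by the same scheme after replacing \eqref{eq:from_b} by \eqref{eq:from_a} and using Lemma \ref{lem:kern}\ref{kernel_a} for the kernel identification.

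Suppose \eqref{eq:rigid_Korn_Lp_dsC} fails. Then there exists a sequence $(P_n) \subset L^p(\Omega,\R^{3\times 3})$ normalized so that
\[
\inf_{T \in K_{S,dSC}} \|P_n - T\|_{L^p} = 1,
\qquad
\|\sym P_n\|_{L^p} + \|\dev\sym\Curl P_n\|_{W^{-1,p}} \to 0.
\]
Since $K_{S,dSC}$ is finite-dimensional (and the $L^p$-distance is coercive in the kernel parameters on the bounded domain $\Omega$), the infimum is attained at some $T_n \in K_{S,dSC}$; set $Q_n \coloneqq P_n - T_n$, so that $\|Q_n\|_{L^p} = 1$. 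Every $T \in K_{S,dSC}$ is a skew-symmetric polynomial with $\dev\sym\Curl T \equiv 0$ (the straightforward ``if''-direction verified in the proof of Lemma \ref{lem:kern}\ref{kernel_b}), hence $\sym Q_n = \sym P_n \to 0$ in $L^p$ and $\dev\sym\Curl Q_n = \dev\sym\Curl P_n \to 0$ in $W^{-1,p}$.

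Because $(Q_n)$ is bounded in $L^p$, the Rellich embedding yields a subsequence (not relabeled) along which $(Q_n)$ is strongly Cauchy in $W^{-1,p}$. Applying Lemma \ref{lem:basic3}\ref{condition_b} to the differences gives
\begin{align*}
\|Q_n - Q_m\|_{L^p} \leq c\bigl(&\|\skew(Q_n - Q_m)\|_{W^{-1,p}} + \|\sym(P_n - P_m)\|_{L^p} \\
&+ \|\dev\sym\Curl(P_n - P_m)\|_{W^{-1,p}}\bigr).
\end{align*}
Since $\sym(Q_n - Q_m) = \sym(P_n - P_m) \to 0$ in $L^p$ and $Q_n - Q_m \to 0$ in $W^{-1,p}$, the decomposition $\skew(Q_n - Q_m) = (Q_n - Q_m) - \sym(Q_n - Q_m)$ also tends to $0$ in $W^{-1,p}$, and the remaining two terms vanish by assumption. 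Hence $(Q_n)$ is Cauchy in $L^p$ with limit $Q \in L^p$ of unit norm satisfying $\sym Q = 0$ and $\dev\sym\Curl Q = 0$. Lemma \ref{lem:kern}\ref{kernel_b} then identifies $Q \in K_{S,dSC}$, whence $T_n + Q \in K_{S,dSC}$ and $\|P_n - (T_n + Q)\|_{L^p} = \|Q_n - Q\|_{L^p} \to 0$, contradicting the normalization.

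The main technical point is that Lemma \ref{lem:basic3} carries an extra $\|\skew P\|_{W^{-1,p}}$ term on the right-hand side which the quantities $\|\sym P\|_{L^p}$ and $\|\dev\sym\Curl P\|_{W^{-1,p}}$ cannot control on their own (cf.\ Remark \ref{Rem:keineNorm}). The role of the Rellich step is precisely to absorb this extra term into the contradiction, at the cost of introducing $K_{S,dSC}$ as the unavoidable obstruction. I do not foresee further difficulties beyond this standard compactness-and-extraction pattern.
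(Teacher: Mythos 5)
Your proof is correct and proves the same estimates, but it is structured somewhat differently from the paper's argument. The paper first uses the Hahn--Banach theorem to extend a dual basis $\ell_1,\dots,\ell_M$ of the finite-dimensional kernel to continuous functionals on $L^p$, then proves by compactness-contradiction the \emph{unconditional} estimate $\|P\|_{L^p}\le c\bigl(\|\sym P\|_{L^p}+\|\sym\Curl P\|_{W^{-1,p}}+\sum_\alpha|\ell_\alpha(P)|\bigr)$, and finally plugs in $P-\pi_a(P)$ with the associated projection $\pi_a$ to obtain the kernel-quotient estimate. You instead normalize the distance to the kernel, use finite-dimensionality to attain the infimum at some $T_n$, and apply Lemma~\ref{lem:basic3}\ref{condition_b} to the \emph{differences} $Q_n-Q_m$ to conclude that $(Q_n)$ is Cauchy in $L^p$ after the Rellich extraction; the strong limit is then identified in the kernel via Lemma~\ref{lem:kern}\ref{kernel_b}, yielding the contradiction directly. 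Both arguments hinge on the same three ingredients (Lemma~\ref{lem:basic3}, Lemma~\ref{lem:kern}, and the compact embedding $L^p(\Omega)\hookrightarrow W^{-1,p}(\Omega)$), so the core idea is identical, but your version bypasses the Hahn--Banach construction and the intermediate inequality with the $\ell_\alpha$-terms; the paper's version has the small extra payoff that it records an unconditional estimate with explicit normalization functionals. One detail worth making fully explicit in your write-up is that the parameter-to-$L^p$ map defining $K_{S,dSC}$ is injective because $\Omega$ is open and nonempty (a polynomial vanishing on $\Omega$ vanishes identically), which is what makes the distance coercive and the infimum attained.
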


\begin{remark}
Setting $\widetilde{a}=\axl(\widetilde{A})$ we have for the linear functions in the kernels
\begin{equation}
 \Anti(\widetilde{A}\,x)= \Anti(\Anti(\widetilde{a})\,x)=x\otimes \widetilde{a}-\widetilde{a}\otimes x = 2\,\skew(x\otimes \widetilde{a})
\end{equation}
so that $K_{S,SC}$ can be alternatively written as
\begin{equation}
 K_{S,SC} = \{T:\Omega\to\R^{3\times3} \mid  T(x)=\skew(x\otimes \widetilde{a})+\Anti(b), \ \widetilde{a}, b\in\R^3\} \tag{\ref{eq:kernel_SSC}'}.
\end{equation}
Furthermore,  the elements of $K_{S,dSC}$ are connected to infinitesimal conformal mappings $\varphi_C$ via
\begin{equation}
 K_{S,SC} = \{T:\Omega\to\R^{3\times3} \mid T(x)=\Anti(\varphi_C(x)), \ \text{ with } \dev\sym\D\varphi_C \equiv0\} \tag{\ref{eq:kernel_SdSC}'}
\end{equation}
cf.~Remark \ref{rem:confi}.
\end{remark}

\begin{proof}[Proof of Theorem \ref{thm:main1_k_new}]
We first prove the formulae for the kernels $K_{S,Sc}$ and $K_{S,dSC}$. If 
\begin{equation}
  P \in K_{S,SC}\coloneqq\{P\in  L^p(\Omega,\R^{3\times3}) \mid  \sym P=0 \text{ a.e.~and }\sym\Curl P=0 \text{ in the dist.~sense}\},
 \end{equation}
 then $P = \skew P$ and $\sym \Curl \skew P = 0$. 
 Thus  \eqref{eq:kernel_SSC} follows by virtue of Lemma \ref{lem:kern} \ref{kernel_a}.
 Similarly, the formula  \eqref{eq:kernel_SdSC} follows from  Lemma \ref{lem:kern} \ref{kernel_b}.

The estimates  \eqref{eq:rigid_Korn_Lp_sC} and \eqref{eq:rigid_Korn_Lp_dsC}
now follow from  Lemma~\ref{lem:basic3}, the fact that the kernels are finite-dimensional and the compactness
of the embedding $L^p(\Omega) \hookrightarrow
W^{-1,p}(\Omega)$, see, for example, the proofs \cite{agn_lewintan2019KornLp,agn_lewintan2019KornLpN,agn_lewintan2020KornLp_tracefree} or
\cite[Theorem 6.15-3]{Ciarlet2013FAbook} for similar reasoning. 
For the convenience of the reader we provide the details for the argument for the estimate
 \eqref{eq:rigid_Korn_Lp_sC}. The proof of \eqref{eq:rigid_Korn_Lp_dsC} is analogous.
By $e_1,\ldots,e_M$ we denote a basis of $K_{S,SC}$, and by $\ell_1, \ldots, \ell_M$
we denote the corresponding dual basis of linear functionals on $K_{S,SC}$
which is characterized by the conditions
\begin{equation}\label{eq:basisKern}
 \ell_\alpha(e_j)\coloneqq \delta_{\alpha j}.
\end{equation}
Then, the Hahn-Banach theorem in a normed vector space (see e.g. \cite[Theorem 5.9-1]{Ciarlet2013FAbook}), allows us to extend $\ell_\alpha$ to continuous linear forms - again denoted by $\ell_\alpha$ - on the Banach space $L^p(\Omega,\R^{3\times3})$, $1\le\alpha\le M$. Note that
\begin{equation}\label{eq:verschwindet}
 \forall\ T \in K_{S,SC}  \qquad T = 0 \quad \Leftrightarrow \quad \ell_\alpha(T)= 0 \ \forall\ \alpha\in\{1,\ldots,M\}.
\end{equation}
We claim that
\begin{equation}\label{eq:hilfsungl_sym}
 \norm{P}_{L^p(\Omega,\R^{3\times 3})}\leq c\,\left(\norm{\sym P }_{L^p(\Omega,\R^{3\times 3})}+ \norm{\sym \Curl P }_{W^{-1,\,p}(\Omega,\R^{3\times 3})}+\sum_{\alpha=1}^M\abs{\ell_\alpha(P)} \right).
\end{equation}
Indeed, if this inequality is false, there exists a sequence
$P_k\in L^p(\Omega,\R^{3\times3})$ with the properties
$$
 \norm{P_k}_{L^p(\Omega,\R^{3\times3})}=1 \quad \text{and}\quad \left(\norm{\sym P_k}_{L^p(\Omega,\R^{3\times3})}+\norm{\sym\Curl P_k}_{ W^{-1,p}(\Omega,\R^{3\times3})}+\sum_{\alpha=1}^M\abs{\ell_\alpha(P_k)}\right)< \frac1k.
$$
Hence, (for a subsequence) $P_k\rightharpoonup P^*$ in $L^p(\Omega,\R^{3\times3})$ and we have $\sym P^*\equiv 0$ and $\sym\Curl P^* \equiv 0$ in the distributional sense but also $\ell_\alpha(P^*)=0$ for all $\alpha=1,\ldots,M$, so that $P^*\equiv0$.
 Since the embedding $L^p(\Omega,\R^{3\times 3})\hookrightarrow   W^{-1,\,p}(\Omega,\R^{3\times 3})$ is compact we get $\skew P_k\to \skew P^*\equiv0$ in $W^{-1,\,p}(\Omega,\R^{3\times 3})$.  Thus, $P_k\to 0$ in $W^{-1,\,p}(\Omega,\R^{3\times 3})$ and this yields  to a contradiction with  \eqref{eq:from_a}. Hence \eqref{eq:hilfsungl_sym} holds.
 
Now consider the projection $\pi_{a}:L^p(\Omega,\R^{3\times 3}) \to K_{S,SC}$ given by
\begin{equation}\label{eq:projection}
 \pi_{a}(P)\coloneqq \sum_{j=1}^M \ell_j(P)\, e_j.
\end{equation}
We obtain $\ell_\alpha(P-\pi_{a}(P)) \overset{\eqref{eq:basisKern}}{=} 0$ for all $1\le\alpha\le M$, so that \eqref{eq:rigid_Korn_Lp_sC} follows after inserting $P-\pi_{a}(P)$ into \eqref{eq:hilfsungl_sym}:
\begin{equation*}
\begin{split}
 \inf_{T\in K_{S,SC}}\norm{P-T}_{L^p(\Omega,\R^{3\times3})}&\leq\norm{P-\pi_{a}(P)}_{L^p(\Omega,\R^{3\times 3})}\\
 &\leq c\,\left(\norm{\sym P }_{L^p(\Omega,\R^{3\times 3})}+ \norm{\sym \Curl P }_{W^{-1,\,p}(\Omega,\R^{3\times 3})} \right).\qedhere
 \end{split}
\end{equation*}
\end{proof}

Finally, we show that the   estimates  in Theorem~\ref{thm:main1_k_new} can be improved to  estimates for $P$ itself, and not just for  $P - T$, if we impose a natural boundary condition which annihilates the relevant  kernels.

We focus on the improvement of \eqref{eq:rigid_Korn_Lp_dsC} because this already implies the improved estimate for \eqref{eq:rigid_Korn_Lp_sC}. For a weak definition of boundary values of certain linear combinations of $P$ it is not sufficient to assume only 
$\dev \sym \Curl P \in W^{-1,\,p}(\Omega, \R^{3 \times 3})$. Indeed, this condition is satisfied for  every $P \in L^p(\Omega, \R^{3 \times 3})$.
We thus consider, for $p \in (1, \infty)$ and $r \in [1, \infty)$ the spaces 
\begin{equation}
  W^{1,\,p, \, r}(\dev\sym\Curl; \Omega,\R^{3\times3}) \coloneqq \{P\in L^p(\Omega,\R^{3\times3}) \mid  \dev\sym\Curl P \in L^r(\Omega,\R^{3\times3})\}.
  \label{eq:def_dSC}
\end{equation}
Equipped with the norm
\begin{align}
  \| P \|_{p,r, dSC}  \coloneqq  \|P \|_{L^p(\Omega, \R^{3 \times 3})} +   \| \dev \sym \Curl P \|_{L^r(\Omega, \R^{3 \times 3})}
\end{align}
this space becomes a Banach space.
In terms of scaling the natural relation between $p$ and $r$ is $p= r^*$ where $r^*$ is the Sobolev exponent of $r$. 
To properly treat the borderline case $p = 1^* = \frac32$  we make the following assumptions
\begin{equation} \label{eq:assumption_r}
r \in [1, \infty), \qquad  \frac1r  \le   \frac1p + \frac13, \qquad r >1  \quad \text{if $p = \frac{3}{2}$}.
\end{equation}
We assume that $\Omega \subset \R^3$ is a bounded domain with Lipschitz boundary. 
To define boundary conditions for certain linear combinations of the components of $P$
 in the distributional sense, we first recall that for $q \in (1, \infty)$ 
the space $C^1(\overline \Omega)$ is dense in $W^{1,\,q}(\Omega)$ and  there exists
a linear bounded and surjective  trace operator $\mathrm{Tr}: W^{1,\,q}(\Omega) \to W^{1- 1/q,\,q}(\partial \Omega)$
which is uniquely characterized by the condition $\mathrm{Tr} f = f|_{\partial \Omega}$ for all $f \in C^1(\overline \Omega)$.
Moreover there exists  a linear,  bounded extension operator $\mathrm{E}: W^{1- 1/q,\, q}(\partial \Omega) \to W^{1,\,q}(\Omega)$ with
$\mathrm{Tr} \circ \mathrm{E} = \mathrm{id}.$
If follows from the divergence theorem and the density of $C^1(\overline \Omega)$ that for all $i=1, 2, 3$
\begin{equation}  \label{eq:by_parts}
\int_\Omega \partial_i f  \,  \intd{x} = \int_{\partial \Omega}  \mathrm{Tr} f   \,  \nu_i  \, \mathrm{d}\mathcal{H}^2      \quad \forall f \in W^{1,\,q}(\Omega)
\end{equation}
where $\nu$ denotes the outer normal of $\partial \Omega$ (which exist $\mathcal H^2$ a.e.\  on $\partial \Omega$) and $\mathcal H^2$ the two-dimensional Hausdorff measure.
For  $p \in (1, \infty)$ we denote by $p'$ the dual exponent given by $\frac1p + \frac1{p'} = 1$. The dual of the space
$W^{1-1/p',\, p'}(\partial \Omega) = W^{1/p,\, p'}(\partial \Omega)$ is denoted by $W^{-1/p,\, p}(\partial \Omega)$.
In order to introduce a weak definition of the boundary values of $\dev \sym [P \times \nu]$ we 
assume that $r$ satisfies  \eqref{eq:assumption_r}. Then we can 
define a bounded map $S:  W^{1,\,p,\,r}(\dev \sym \Curl; \Omega; \R^{3 \times 3}) \to W^{-1/p,\, p}(\partial \Omega; \R^{3 \times 3})$ by
\begin{align}\label{eq:partIntsym_new}
   \skalarProd{SP}{Q}_{\partial \Omega} &\coloneqq   \int_{\Omega}   \skalarProd{\dev \sym\Curl P}{\mathrm{E}Q}  -   \skalarProd{P}{\Curl\dev \sym \mathrm{E}Q}\, \intd{x}
   \quad \forall\ Q\in  W^{1/p,\,p'}(\partial\Omega,\R^{3\times 3}).
\end{align}
Here the extension operator is applied componentwise. 
If  $a \in C^1(\overline \Omega; \R^3)$ and $b \in W^{1,\,q}(\Omega, \R^3)$ then
it follows from   \eqref{eq:by_parts}
that 
\begin{equation}
   \int_{\Omega}   \skalarProd{\curl a}{b}   -  \skalarProd{a}{\curl b} \, \intd{x}   =  \int_{\partial \Omega}     \skalarProd{a \times (-\nu)}{ \mathrm{Tr}\, b}  
  \, \mathrm{d}\mathcal H^2\,.
\end{equation}
Using this identity, the fact that $\Curl$ acts row-wise  and \eqref{eq:matrixCurl}, one easily deduces that 
 for $P \in C^1(\overline \Omega; \R^{3 \times 3})$ and $Q \in    W^{1/p,\,p'}(\partial\Omega,\R^{3\times 3})$ 
\begin{equation}  \skalarProd{SP}{Q}_{\partial \Omega} = \int_{\partial \Omega}     \skalarProd{\dev \sym [P \times (-\nu)]}{Q}  \, d\mathcal{H}^{2}\,.
\end{equation}
Thus, for $P \in C^1(\overline \Omega; \R^{3 \times 3})$ we have $SP = \dev \sym [P \times (-\nu)]$.
Let $\Gamma$ be a relatively open subset of $\partial \Omega$. 
We say that 
$$ \text{$SP = 0$ in $\Gamma$} \qquad \text{if}   \qquad
 \skalarProd{SP}{Q} = 0 \quad \text{$\forall Q \in (W^{1/p,\, p'} \cap C^0)(\partial \Omega; \R^{3 \times 3})$ with
 $Q=0$ on $\partial \Omega \setminus \Gamma$.}
 $$ 
 Note that  $ (W^{1/p, \,p'} \cap C^0)(\partial \Omega)$ is dense in $ W^{1/p,\, p'}(\partial \Omega)$ since it contains
 $\mathrm{Tr}(C^1(\overline \Omega))$. 
We define
\begin{equation}
 W^{1,\,p, \, r}_{0, \Gamma}(\dev\sym\Curl; \Omega,\R^{3\times3}) \coloneqq  
 \{ P \in W^{1,\,p, \, r}(\dev\sym\Curl; \Omega,\R^{3\times3}) \mid \text{$SP = 0$ in $\Gamma$} \}. 
\end{equation}
In particular 
\begin{equation}  \label{eq:smooth_zero_bc}
 T \in C^1(\overline \Omega; \R^{3 \times 3}) \cap  W^{1,\,p, \, r}_{0, \Gamma}(\dev\sym\Curl; \Omega,\R^{3\times3}) \quad \Longrightarrow
\quad \dev \sym [T \times \nu ]= 0 \quad \text{on $\Gamma$.}
\end{equation}
Since $S$ is continuous,  the space ~ $ W^{1,\,p, \, r}_{0, \Gamma}(\dev\sym\Curl; \Omega,\R^{3\times3})$ ~ is a closed subspace of \linebreak $ W^{1,\,p, \, r}(\dev\sym\Curl; \Omega,\R^{3\times3})$.
\begin{theorem} \label{thm:main2sym_new}
Let $\Omega \subset \R^3$ be a bounded Lipschitz domain, let  $1<p<\infty$ and assume that 
$r$ satisfies  \eqref{eq:assumption_r}. Let $\Gamma \subset \partial \Omega$
be relatively open and non-empty. Then there exists a constant $c = c(p, r, \Omega, \Gamma)$ such that 
 for all $P\in  W^{1,\,p,\,r}_{0, \Gamma}(\dev\sym\Curl; \Omega,\R^{3\times3})$ we have
 \begin{equation}
     \norm{ P }_{L^p(\Omega,\R^{3\times3})}\leq 
     c\,\left(\norm{\sym P }_{L^p(\Omega,\R^{3\times3})}+ \norm{ \dev\sym\Curl P }_{L^r(\Omega,\R^{3\times3})}\right).\label{eq:Korn_Lp_thm_SdSC_new}
 \end{equation}
 \end{theorem}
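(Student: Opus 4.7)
The plan is to derive Theorem~\ref{thm:main2sym_new} from the quotient estimate of Theorem~\ref{thm:main1_k_new}\ref{rigid_b} in two steps: first replace the $W^{-1,p}$-norm of $\dev\sym\Curl P$ by the $L^r$-norm, and second remove the infimum over the kernel by exploiting the boundary condition $SP = 0$ on $\Gamma$.

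\smallskip
\textbf{Step 1 (Sobolev embedding).} For $r$ satisfying \eqref{eq:assumption_r} there is a continuous embedding $L^r(\Omega) \hookrightarrow W^{-1,\,p}(\Omega)$, by duality from $W^{1,\,p'}_0(\Omega) \hookrightarrow L^{r'}(\Omega)$; the borderline case $p=\tfrac{3}{2}$ (so $p'=3$) excludes $L^\infty$ and hence forces $r>1$. Combined with Theorem~\ref{thm:main1_k_new}\ref{rigid_b} this yields
\[
\inf_{T \in K_{S,dSC}} \|P - T\|_{L^p(\Omega)} \;\le\; c\,\bigl(\|\sym P\|_{L^p(\Omega)} + \|\dev\sym \Curl P\|_{L^r(\Omega)}\bigr).
\]

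\smallskip
\textbf{Step 2 (removing the infimum).} I would argue by compactness and contradiction: if the desired inequality failed there would be a sequence $P_k \in W^{1,\,p,\,r}_{0,\Gamma}(\dev\sym\Curl;\Omega,\R^{3\times 3})$ with $\|P_k\|_{L^p} = 1$ and $\|\sym P_k\|_{L^p} + \|\dev\sym\Curl P_k\|_{L^r} \to 0$. Since $K_{S,dSC}$ is finite-dimensional, the infimum in Step~1 is attained by some $T_k \in K_{S,dSC}$, so $\|P_k - T_k\|_{L^p} \to 0$; thus $\|T_k\|_{L^p} \to 1$, and finite-dimensionality gives a subsequence $T_k \to T^\ast \in K_{S,dSC}$ with $\|T^\ast\|_{L^p} = 1$. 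Because $\dev\sym\Curl T_k \equiv 0$ on $K_{S,dSC}$, the difference $P_k - T_k$ actually tends to $0$ in the full $W^{1,p,r}(\dev\sym\Curl;\Omega,\R^{3\times 3})$-norm, so the continuity of $S$ in \eqref{eq:partIntsym_new} combined with $SP_k = 0$ on $\Gamma$ yields $ST^\ast = 0$ on $\Gamma$.

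\smallskip
\textbf{Step 3 (rigidity lemma).} It remains to show that any $T \in K_{S,dSC}$ with $ST = 0$ on a relatively open non-empty $\Gamma \subset \partial\Omega$ must vanish, giving the contradiction $T^\ast = 0$. Writing $T = \Anti(\varphi_C)$ with $\varphi_C$ an infinitesimal conformal map (cf.~Remark~\ref{rem:confi}), smoothness of $T$ yields $ST = \dev\sym[T \times (-\nu)]$ pointwise $\mathcal H^2$-a.e.\ on $\partial\Omega$, so Observation~\ref{obs:3} reduces the boundary condition to $\sym[\Anti(\varphi_C) \times \nu] = 0$ a.e.\ on $\Gamma$. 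By the algebraic identity \eqref{eq:prod_id} this reads
\[
\tfrac{1}{2}\bigl(\nu \otimes \varphi_C + \varphi_C \otimes \nu\bigr) - \langle \varphi_C, \nu\rangle\,\id \;=\; 0;
\]
taking the trace yields $\langle \varphi_C, \nu\rangle = 0$, and then applying the remaining identity to $\nu$ yields $\varphi_C = 0$ $\mathcal H^2$-a.e.\ on $\Gamma$. Finally, the three components of $\varphi_C$ are polynomials of degree at most two which simultaneously vanish on a set of positive two-dimensional Hausdorff measure. The flow generated by $\varphi_C$ is a one-parameter subgroup of Möbius transformations whose fixed-point set cannot contain a two-dimensional subset of $\R^3$ unless the flow is trivial, so $\varphi_C \equiv 0$ and $T^\ast \equiv 0$.

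\smallskip
The Sobolev embedding and the finite-dimensional compactness argument are routine; the main obstacle is the rigidity statement of Step~3. The pointwise algebraic reduction $\dev\sym[\Anti(\varphi_C)\times \nu]=0 \Rightarrow \varphi_C = 0$ on $\Gamma$ is immediate from \eqref{eq:prod_id}, but passing from $\varphi_C = 0$ on $\Gamma$ to $\varphi_C \equiv 0$ relies on a non-trivial unique-continuation-type fact about conformal Killing fields on $\R^3$, which is where the real content of the boundary condition is used.
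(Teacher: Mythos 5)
Your proof follows the same overall strategy as the paper: pass from the quotient estimate of Theorem~\ref{thm:main1_k_new}\ref{rigid_b} via the Sobolev embedding $L^r\hookrightarrow W^{-1,p}$, run a compactness-contradiction argument using the finite-dimensionality of $K_{S,dSC}$ and the closedness of the boundary-condition space, and reduce the whole theorem to showing that $K_{S,dSC}\cap W^{1,p,r}_{0,\Gamma}=\{0\}$. Steps 1 and 2 match the paper's argument; the paper phrases Step 2 simply as ``$W^{1,p,r}_{0,\Gamma}$ is a closed subspace,'' which is equivalent to your appeal to the continuity of $S$. Your algebraic reduction via \eqref{eq:prod_id} in Step 3 (giving $\varphi_C=0$ on $\Gamma$) is correct and is an alternative to the paper's use of the lower bound in Observation~\ref{obs:2}, which gives the same conclusion more compactly: $\dev\sym(\Anti(a)\times b)=0$ and $b\neq 0$ force $a=0$.

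The only genuine gap is the final rigidity assertion, which you flag yourself: that a conformal Killing field $\varphi_C$ vanishing on $\Gamma$ must vanish identically. You invoke this as a ``non-trivial unique-continuation-type fact'' and appeal to the fixed-point structure of Möbius flows, but you do not prove it. The paper supplies exactly this step as Lemma~\ref{le:bc_nondegenerate}, and its proof is elementary and quite short: under the hypotheses that $\Gamma$ is not discrete, not contained in a line, and not contained in a circle, one reduces via a translation to $0\in\overline\Gamma$, obtains $b=0$ by continuity, shows $\beta=0$ and $\langle d,x\rangle=0$ on $\Gamma$ by pairing $f$ with $x$, and then concludes by a direct polynomial analysis that either $\Gamma$ would lie in a line (if $d=0$, $A\neq 0$) or in a circle (if $d\neq 0$, $a_1\neq 0$). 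No conformal geometry is needed, and this lemma is also sharper than the fact you wanted: the hypotheses ``not discrete, not in a line, not in a circle'' are strictly weaker than having positive $\mathcal H^2$-measure. Your Möbius picture is correct in spirit (the zero set of a non-trivial conformal Killing field is at most a line or a circle), but as stated it hides a subtlety — a priori the zero set is a degree-two algebraic variety and could be a surface, and ruling this out is precisely what the paper's lemma does, at a level of elementary concreteness your sketch does not supply. Supplying Lemma~\ref{le:bc_nondegenerate} (or a citation for the conformal Killing rigidity) would complete your argument.
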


 \begin{remark}
Conti and Garroni \cite{conti2020sharp} and Gmeineder and Spector \cite{gmeineder2020kornmaxwellsobolev} have shown that the estimate
$$  \norm{ P }_{L^p(\Omega,\R^{3\times3})}\leq 
     c\,\left(\norm{\sym P }_{L^p(\Omega,\R^{3\times3})}+ \norm{ \Curl P }_{L^r(\Omega,\R^{3\times3})}\right)$$
     holds also in the borderline case $r=1$ and $p=\frac32$ under the normalization condition $\int_\Omega\skew P \,\intd{x}=0$ similar to \cite{Garroni10}. We do not know if Theorem~\ref{thm:main2sym_new} holds in this borderline case.
\end{remark}

\corrected{
To show Theorem \ref{thm:main2sym_new} we use the following simple fact which will be proved after the proof of Theorem \ref{thm:main2sym_new}:}
\begin{lemma} \label{le:bc_nondegenerate}
 Assume that  $\Gamma \subseteq \R^3$ has the following properties
\begin{center}\quad
\begin{enumerate*}
\item $\Gamma$ is not discrete; \hfill{}
\item $\Gamma$ is not contained in a line; \hfill{}
\item $\Gamma$ is not contained in a circle.
\end{enumerate*}
\end{center}
Let  $A \in \so(3)$, $b,d \in \R^3$, $\beta \in \R$ and consider the  function $f: \R^3 \to \R^3$
given by
$$ f(x) = A x + \beta x + b + \langle d, x \rangle x - \frac12 d \norm{x}^2.$$
Then 
\begin{equation}  \label{label:boundary_conditions_kernel}
f= 0 \quad \text{on $\Gamma$} \quad \Longrightarrow  \quad
A = 0,\,  b=d=0,\,  \beta = 0.
\end{equation}
\end{lemma}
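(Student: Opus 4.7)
The plan is to reduce the statement to the \emph{affine} sub‐case $d=0$ via a translation followed by a M\"obius inversion, and then conclude by an eigenvalue analysis. Note that $f$ is precisely an infinitesimal conformal Killing vector field on $\R^3$, and the three conditions on $\Gamma$ are designed to rule out every geometric configuration that could serve as the zero set of a non‑trivial such field.

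\emph{Affine case.} If $d=0$, then $f(x)=(A+\beta I)x+b$. Since $A\in\so(3)$ has eigenvalues $0,\pm i\norm{\axl A}$, the matrix $A+\beta I$ has real rank $0$, $2$, or $3$. Correspondingly, the zero set $Z(f)\coloneqq\{x\in\R^3:f(x)=0\}$ is either all of $\R^3$, a single affine line (or $\emptyset$), or a single point (or $\emptyset$). Because $\Gamma$ is infinite (from non‑discreteness) and not contained in any line, one necessarily has $Z(f)=\R^3$, so $A+\beta I=0$ and $b=0$; separating the scalar and skew parts via $\tr A=0$ then gives $A=0$ and $\beta=0$.

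\emph{Reduction via translation and inversion.} For the general case, pick an accumulation point $x_0\in\overline\Gamma$ (which exists by (1)); by continuity $f(x_0)=0$. The translation $y=x-x_0$ sends $f$ to another infinitesimal conformal map $\tilde f$ of the same form, with new parameters $(\tilde A,\tilde\beta,0,d)$ (a direct computation analogous to the translation identity used earlier in the paper). Now apply the M\"obius inversion $\iota(x)=x/\norm{x}^2$. Starting from the pushforward formula $(\iota_*\tilde f)(y)=\norm{y}^2(I-2\hat y\otimes\hat y)\tilde f(\iota(y))$ (where $\hat y=y/\norm{y}$) and using the skew‑symmetry identity $\skalarProd{y}{\tilde A y}=0$ to simplify, one obtains the transformation law $(A,\beta,b,d)\mapsto(A,-\beta,-d/2,-2b)$ on the ten parameters. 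Since $\tilde b=0$, the new special‑conformal parameter $-2\tilde b$ vanishes, so $\iota_*\tilde f$ is affine.

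\emph{Applying the affine case after inversion.} It remains to verify that the image set $\iota(\Gamma\setminus\{0\})$ satisfies the hypotheses of the affine case — namely that it is infinite and not contained in any line. Infiniteness follows because $\iota$ is a homeomorphism of $\R^3\setminus\{0\}$ and $\Gamma$ is infinite. For the ``no line'' property, I would use that $\iota$ maps generalized circles (lines or circles) to generalized circles: if $\iota(\Gamma\setminus\{0\})$ lay on a line $L$, then $\Gamma\setminus\{0\}$ would lie on $\iota(L)$, which is either a line or a circle through the origin, so $\Gamma$ itself would be contained in that generalized circle, contradicting (2) or (3). Hence the affine case forces $\iota_*\tilde f\equiv 0$, so $\tilde f\equiv 0$, so $f\equiv 0$, and all coefficients vanish. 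The main obstacle is the explicit verification of the inversion transformation law on the parameters and the fact that the combined condition ``$\Gamma$ is not contained in a line or a circle'' is invariant under M\"obius inversions.
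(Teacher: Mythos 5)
Your proposal is correct, but it takes a genuinely different route from the paper's proof. The paper argues directly: after translating so that an accumulation point of $\Gamma$ sits at the origin (whence $b=0$), it pairs $f(x)$ with $x$ — using that $\langle Ax,x\rangle=0$ and $\langle \langle d,x\rangle x - \tfrac12 d\|x\|^2, x\rangle = \tfrac12\langle d,x\rangle\|x\|^2$ — to deduce $\beta + \tfrac12\langle d,x\rangle=0$ on $\Gamma\setminus\{0\}$, hence $\beta=0$ and $\langle d,x\rangle=0$ on $\Gamma$; it then splits into the cases $d=0$ (where $\ker A$ is a line if $A\ne 0$) and $d\ne0$ (where $\Gamma$ is confined to a hyperplane, $d$ is rotated onto $e_3$, and the equation $a_1 x_2 = \tfrac{d_3}{2}(x_1^2+x_2^2)$ shows $\Gamma$ lies on a circle or on $\{0\}$). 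You instead exploit the conformal group structure: the same translation kills $b$, and then the Möbius inversion $\iota(x)=x/\|x\|^2$, under which the parameters transform as $(A,\beta,b,d)\mapsto(A,-\beta,-d/2,-2b)$, swaps the translation and special-conformal slots, landing you in the affine sub-case where a rank count on $A+\beta I$ finishes the argument. Your inversion computation checks out (the key simplifications being $\langle y,Ay\rangle=0$ and $(I-2\hat y\otimes\hat y)y=-y$), and the observation that the combined hypothesis ``not contained in a line or a circle'' is precisely the Möbius-invariant reformulation needed after inversion is the right one. What your approach buys is conceptual clarity — it explains why the three geometric hypotheses are exactly right (they rule out every possible zero set of a non-trivial conformal Killing field, which is a Möbius image of the zero set of an affine one) and avoids the coordinate rotation and the explicit circle equation. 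The paper's approach buys brevity and self-containedness: it uses only the single scalar identity $\langle f(x),x\rangle = (\beta+\tfrac12\langle d,x\rangle)\|x\|^2$ and elementary linear algebra, with no need to establish the pushforward formula or the invariance of generalized circles under inversion. Both are valid; if you were to write yours out in full, the pieces to make precise are the pushforward identity $(\iota_*\tilde f)(y)=\|y\|^2(I-2\hat y\otimes\hat y)\tilde f(y/\|y\|^2)$, the resulting parameter map, and the fact that $\iota$ takes lines not through the origin to circles through the origin and fixes lines through the origin.
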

 \begin{proof}[Proof of Theorem \ref{thm:main2sym_new}] We first show that 
 \begin{equation} \label{eq:bc_annihilates_kernel}  K_{S, dSC} \cap W^{1,\,p,\,r}_{0, \Gamma}(\dev \sym \Curl;\Omega;  \R^{3 \times 3})= \{0\}.
 \end{equation}
 Then the assertion will  follow by a standard argument  from \eqref{eq:rigid_Korn_Lp_dsC} and the fact that
 $K_{S,dSC}$ is finite-dimensional while $W^{1,\,p,\,r}_{0, \Gamma}(\dev \sym \Curl;\Omega;  \R^{3 \times 3})$ is closed.
 
 To show  \eqref{eq:bc_annihilates_kernel},   let ~$T  \in K_{S, dSC} \cap W^{1,\,p,\,r}_{0, \Gamma}$.~ Then $T$ is smooth and thus \eqref{eq:smooth_zero_bc} implies that\linebreak ~$\dev \sym [T \times \nu] = 0$~ on $\Gamma$. Since the elements of $K_{S, dSC}$ are skew-symmetric 
 it follows from 
  Observation \ref{obs:2} 
  and the formula for $K_{S, dSC}$ that there exist $\widetilde A \in \so(3)$, $b,d \in \R^3$ and $\beta \in \R$ such that
\begin{align*}
\axl T(x)=\widetilde{A}\,x+\beta\, x+b +\skalarProd{d}{x}\,x -\frac12d\norm{x}^2 = 0 \quad \text{for all $x\in\Gamma$}\,. 
\end{align*}
Now  Lemma~\ref{le:bc_nondegenerate} implies  that all coefficients vanish and hence $T \equiv 0$. This concludes the proof of 
\eqref{eq:bc_annihilates_kernel}.

Assume now that \eqref{eq:Korn_Lp_thm_SdSC_new} does not hold. Then there exists a sequence $P_k \in W^{1,\,p,\,r}_{0, \Gamma}(\dev \sym \Curl;\Omega;  \R^{3 \times 3})$
such that    
$$ \norm{\sym P_k }_{L^p(\Omega,\R^{3\times3})}+ \norm{ \dev\sym\Curl P_k }_{L^r(\Omega,\R^{3\times3})}  \to 0$$
and
$$ \| P_k \|_{L^p(\Omega,\R^{3\times3})} = 1.$$
The assumption  \eqref{eq:assumption_r} on $r$ implies that $W^{1,\,p'}_0(\Omega)$ embeds continuously into $L^{r'}(\Omega)$. Hence $L^r(\Omega)$ embeds
continuously into $W^{-1,\,p}(\Omega)$.
Thus it follows from  \eqref{eq:rigid_Korn_Lp_dsC} that  there exist $T_k \in K_{S,dSC}$ such that
$$  \| P_k - T_k  \|_{L^p(\Omega,\R^{3\times3})} \to 0.$$
In particular, the sequence $T_k$ is bounded in $L^p$ and since $K_{S,dSC}$ is finite-dimensional,  there exists a $T \in K_{S,dSC}$ and a subsequence
such that $T_k \to T$ in $L^p(\Omega, \R^{3 \times 3})$. 
Thus (for the same subsequence) $P_k  \to T$ in $L^p(\Omega, \R^{3 \times 3})$.
Moreover  $\dev \sym \Curl P_k$ converges to zero in $L^r$ and $\dev \sym \Curl T = 0$.
Since  $W^{1,\, p,\, r}_{0, \Gamma}(\dev \sym \Curl;\Omega;  \R^{3 \times 3})$ is a closed subspace of
$W^{1,\, p, \, r}(\dev \sym \Curl;\Omega;  \R^{3 \times 3})$
it follows that $T \in W^{1,\, p, \, r}_{0, \Gamma}(\dev \sym \Curl;\Omega;  \R^{3 \times 3})$.
Hence  \eqref{eq:bc_annihilates_kernel} implies that $T = 0$ and thus $P_k \to 0$ in  $L^p(\Omega, \R^{3 \times 3})$.
This contradicts the hypothesis  $\norm{P_k}_{L^p} = 1$.
\end{proof}

\begin{remark}  
Estimate \eqref{eq:Korn_Lp_thm_SdSC_new} does not hold true in other dimensions, since only  in three dimensions the matrix $\Curl$ returns a square matrix.
\end{remark}


\begin{proof}[\corrected{Proof of Lemma \ref{le:bc_nondegenerate}}] Since $\Gamma$ is not discrete there exists $\bar x \in \R^3$ and $x_k \in \Gamma \setminus \{\bar x\}$ such that
$\lim_{k \to \infty} x_k = \bar x$. The map $g(x) \coloneqq  f(\bar x + x)$ has the same form as $f$ (with different values of the parameters $A,b,d,\beta$).
Thus we may assume without loss of generality that $\bar x = 0$. Since $f$ is continuous we get $f(0) = 0$ and hence $b = 0$. 

Since 
$$ 0= \langle f(x), x \rangle  = \beta \norm{x}^2 + \frac12 \langle d, x \rangle \norm{x}^2 \quad \forall x \in \Gamma \setminus \{0\}$$
we deduce that 
$\beta + \frac12 \langle d, x\rangle = 0$ for all $x \in \Gamma \setminus \{0\}$. Considering points $x_k \in \Gamma \setminus \{0\}$
with $x_k \to 0$ we see that $\beta = 0$ and
$$ \langle d, x \rangle = 0  \quad \forall x \in \Gamma.$$

If $d = 0$ then $f(x) = Ax$. If $A \ne 0$ then the kernel of $A$ is a line since $A \in \so(3)$. Thus $\Gamma$ would be contained in a line which
contradicts our assumption. Hence for $d= 0$ we get $A=0$ and we are done. 

If $d \ne 0$ then $\Gamma$ is contained in the hyperplane perpendicular to $d$. \corrected{Since $\skalarProd{d}{f(x)}=0$ for all $x\in\Gamma$ and $A$ is skew-symmetric we get ~ $2\skalarProd{A\,d}{x}+\norm{d}^2\norm{x}^2=0$ ~ for all $x\in\Gamma$. This implies that
\begin{equation*}
 \left\lVert x+\frac{1}{\norm{d}^2}A\,d\right\rVert^2=\frac{\norm{A\,d}^2}{\norm{d}^4} \quad \forall \ x\in\Gamma.
\end{equation*}
Since $A$ is skew-symmetric, the vector $A\,d$ is contained in the plane perpendicular to $d$. It follows that $\Gamma$ is either a point (and hence discrete) or a circle ($x\in\Gamma$ and $A\,d$ lie in the same hyperplane) with center $-\frac{1}{\norm{d}^2}A\,d$ and radius $\frac{\norm{A\,d}}{\norm{d}^2}$ which contradicts our assumption.
}
\end{proof}

It is well-known, that Korn's inequality and Poincar\'{e}'s inequality are not equivalent, however, due to the presence of the $\Curl$ we get back both inequalities from our general result \eqref{eq:Korn_Lp_thm_SdSC_new}. Indeed, in the compatible case $P=\D u$ we recover a tangential Korn inequality.
\begin{corollary}\label{cor:tKorn}
  Let $\Omega \subset \R^3$ be a bounded Lipschitz domain, $1<p<\infty$ and $\Gamma$ a relatively open non-empty subset in $\partial\Omega$. There exists a constant $c=c(p,\Omega,\Gamma)>0$ such that for all $u\in  W^{1,\,p}(\Omega,\R^{3})$ with ~ $\dev\sym(\D u \times \nu) = 0$ ~ on $\Gamma$  we have
   \begin{equation}
   \norm{\D u }_{L^p(\Omega,\R^{3\times 3})} \le c\, \norm{\sym \D u}_{L^p(\Omega,\R^{3\times3})}\,.
 \end{equation}
\end{corollary}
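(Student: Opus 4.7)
The plan is to reduce Corollary \ref{cor:tKorn} to a one-line application of Theorem \ref{thm:main2sym_new} by choosing $P \coloneqq \D u$. There are essentially three verifications to make, none of which should involve genuine difficulty.

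First, I would check that $P = \D u$ belongs to $W^{1,p,r}_{0,\Gamma}(\dev\sym\Curl;\Omega,\R^{3\times 3})$ for a suitable choice of $r$ satisfying \eqref{eq:assumption_r}. Since $u\in W^{1,p}(\Omega,\R^3)$, we have $P\in L^p(\Omega,\R^{3\times 3})$. Because each row of $\D u$ is a gradient of a scalar function, $\Curl \D u \equiv 0$ in the distributional sense (matrix $\Curl$ acts row-wise, and $\curl\nabla u_i=0$), so in particular $\dev\sym\Curl\D u \equiv 0 \in L^r(\Omega,\R^{3\times 3})$ for every admissible $r$; we may simply take, say, $r=\min\{p,2\}$ (or any $r$ in the range allowed by \eqref{eq:assumption_r}) to satisfy the hypothesis.

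Second, the assumed boundary condition $\dev\sym(\D u\times\nu) = 0$ on $\Gamma$ is, by construction, the weak condition $S(\D u)=0$ on $\Gamma$ introduced in \eqref{eq:partIntsym_new}; this is exactly the membership condition for $W^{1,p,r}_{0,\Gamma}(\dev\sym\Curl;\Omega,\R^{3\times 3})$. Applying Theorem \ref{thm:main2sym_new} to $P=\D u$ then yields
\[
\norm{\D u}_{L^p(\Omega,\R^{3\times 3})} \le c\,\bigl(\norm{\sym\D u}_{L^p(\Omega,\R^{3\times 3})} + \norm{\dev\sym\Curl\D u}_{L^r(\Omega,\R^{3\times 3})}\bigr),
\]
and since $\Curl\D u \equiv 0$ the last term vanishes, giving the stated inequality.

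There is really no obstacle here; the corollary is a direct specialization. The only conceptual point worth flagging is that the hypothesis of the corollary must be read in the same weak sense as in Theorem \ref{thm:main2sym_new}, namely as $S(\D u) = 0$ in $\Gamma$, which coincides with the pointwise condition $\dev\sym(\D u\times\nu)=0$ on $\Gamma$ whenever $u$ is smooth enough to have a classical trace of $\D u$.
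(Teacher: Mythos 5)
Your proof is correct and follows exactly the same route as the paper, which also establishes the corollary in one line by substituting $P=\D u$ into Theorem \ref{thm:main2sym_new} and noting that $\Curl\D u\equiv 0$ annihilates the dislocation term. The extra verifications you spell out (membership in $W^{1,p,r}_{0,\Gamma}(\dev\sym\Curl;\Omega,\R^{3\times3})$, the weak reading of the boundary condition via $S$) are all accurate and just make explicit what the paper leaves implicit.
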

\begin{proof} This follows from Theorem~\ref{thm:main2sym_new}  by setting $P = \D u$. 
\end{proof}
\begin{remark} \label{re:cor_tKorn}
 This boundary condition is rather weak. If  $\Gamma$ is flat, then the condition $\dev\sym(\D u \times \nu)_{|\Gamma} = 0 $ implies that $u=\alpha\,x+b$ along $\Gamma$ with $\alpha\in\R$ and $b\in\R^3$, see Appendix \ref{sec:flatbdry}. 
\end{remark}

For skew-symmetric $P=\Anti(a)$ we recover from \eqref{eq:Korn_Lp_thm_SdSC_new} a Poincar\'{e}'s inequality involving only the deviatoric (trace-free) part of the symmetrized gradient. Such a   Poincar\'{e}-type inequality can also be generalized to functions of bounded deformation, cf.~\cite{FuchsRepin2010Poincaretracefree}.
\begin{corollary}\label{cor:devsymPoin}
  Let $\Omega \subset \R^3$ be a bounded Lipschitz domain and $1<p<\infty$. Set $ W^{1,p}_{\Gamma,0}(\Omega,\R^3) \coloneqq  \{ a \in W^{1,p}(\Omega; \R^3) \mid \mathrm{Tr}\, a = 0 \text{ on $\Gamma$}\}$. There exists a constant $c=c(p,\Omega,\Gamma)>0$ such that for all $a\in  W^{1,\,p}_{\Gamma,0}(\Omega,\R^3)$, we have
 \begin{equation}
\norm{a}_{L^p(\Omega,\R^3)}\le c\, \norm{\dev\sym\D a}_{L^p(\Omega,\R^{3\times3})}\,.
 \end{equation}
 \end{corollary}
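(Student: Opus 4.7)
The plan is to apply Theorem~\ref{thm:main2sym_new} directly with the choice $P \coloneqq \Anti(a)$ and $r \coloneqq p$, thereby reducing the Poincar\'e-type estimate for $a$ to the incompatible Korn estimate for a skew-symmetric field.

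The algebraic step is routine: since $\Anti(a) \in \so(3)$ pointwise, one has $\sym \Anti(a) \equiv 0$, which kills the first term on the right-hand side of \eqref{eq:Korn_Lp_thm_SdSC_new}. For the second term, Nye's formula \eqref{eq:Nye} gives $\Curl \Anti(a) = (\div a)\,\id - (\D a)^T$, hence $\sym\Curl\Anti(a) = (\div a)\,\id - \sym \D a$ with trace $2\div a$, so that
\begin{equation*}
 \dev\sym\Curl\Anti(a) = \tfrac{1}{3}(\div a)\,\id - \sym \D a = -\dev\sym \D a\,.
\end{equation*}
In particular, $\dev\sym\Curl\Anti(a)\in L^p(\Omega,\R^{3\times 3})$, and the choice $r=p$ trivially satisfies the admissibility constraints \eqref{eq:assumption_r} (note $p>1$ is already assumed).

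The one substantive point to be checked is the weak boundary condition, namely that $\Anti(a) \in W^{1,\,p,\,p}_{0,\Gamma}(\dev\sym\Curl;\Omega,\R^{3\times 3})$ whenever $\mathrm{Tr}\,a = 0$ on $\Gamma$. Since $a \in W^{1,p}(\Omega,\R^3)$ we have $\Anti(a)\in W^{1,p}(\Omega,\R^{3\times 3})$, so both terms in the defining formula \eqref{eq:partIntsym_new} can be integrated by parts in the classical sense. Using the self-adjointness of $\dev\sym$ together with the row-wise Green's identity for $\Curl$ on $W^{1,p}\times W^{1,p'}$ fields, the right-hand side of \eqref{eq:partIntsym_new} collapses to a boundary integral of the form $\int_{\partial\Omega}\langle \mathrm{Tr}\,\Anti(a)\times(-\nu), \dev\sym Q\rangle\,d\mathcal H^2$. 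For admissible test fields $Q \in (W^{1/p,\,p'}\cap C^0)(\partial\Omega,\R^{3\times 3})$ vanishing on $\partial\Omega \setminus \Gamma$, the integration domain reduces to $\Gamma$, where $\mathrm{Tr}\,\Anti(a) = \Anti(\mathrm{Tr}\,a) = 0$; hence $S[\Anti(a)]=0$ on $\Gamma$.

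With these verifications in place, Theorem~\ref{thm:main2sym_new} yields $\|\Anti(a)\|_{L^p}\le c\,\|\dev\sym \D a\|_{L^p}$, and the pointwise identity $\|\Anti(a)\|^2 = 2\|a\|^2$ gives the claimed estimate $\|a\|_{L^p}\le c\,\|\dev\sym \D a\|_{L^p}$. The only mildly delicate step is the trace identification in Step~3; everything else is a direct algebraic substitution into the main theorem.
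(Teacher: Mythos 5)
Your proposal is correct and follows the same route as the paper's one-line proof: set $P=\Anti(a)$, note $\sym P\equiv 0$, use Nye's formula to get $\dev\sym\Curl\Anti(a)=-\dev\sym\D a$, verify the weak tangential boundary condition via $\mathrm{Tr}\,\Anti(a)=\Anti(\mathrm{Tr}\,a)=0$ on $\Gamma$ (equivalently the pointwise fact $\dev\sym(\Anti(a)\times\nu)=0\Leftrightarrow a=0$, which is Observation~\ref{obs:2}), and finish with $\norm{\Anti(a)}^2=2\norm{a}^2$. You merely spell out the details that the paper compresses into three observations, including the integration-by-parts argument behind the boundary-condition check, which is a reasonable and accurate elaboration.
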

\begin{proof} This follows from Theorem~\ref{thm:main2sym_new}  by setting $P = \Anti(a)$ and the following observations:\\
 $\dev\sym(\Anti(a) \times \nu) = 0 \ \Leftrightarrow \ a=0$ on $\Gamma$, $\Curl(\Anti(a))=L(\D a)$ \corrected{and the form of $\Anti(a)$, cf.~\eqref{eq:Anti_Komp}.}
\end{proof}
The results of Theorem~\ref{thm:main2sym_new} , Corollary~\ref{cor:tKorn} and Corollary~\ref{cor:devsymPoin} can be graphically summarized as follows.
\begin{center}
 \includegraphics[width=\textwidth]{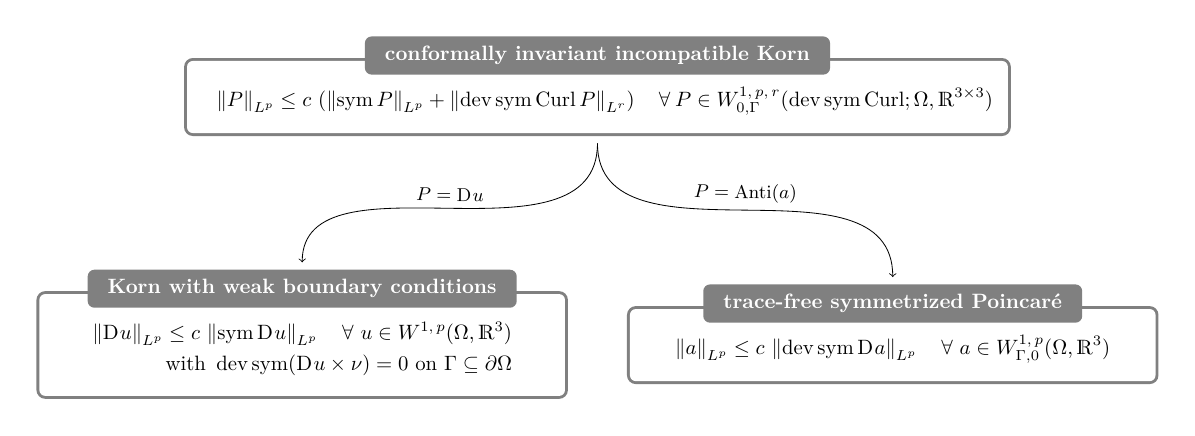}
\end{center}

\section{Comparison of the spaces  $W^{1,\,p}(\sym \Curl)$ and $W^{1,\,p}(\dev \sym \Curl)$}
Using the linear expression of the entries of $\D\Curl P$ in terms of the entries of $\D\dev\Curl P$ the authors of \cite{agn_lewintan2020KornLp_tracefree} showed  that for all $P\in\mathscr{D}'(\Omega,\R^{3\times3})$ and all $m\in\Z$ one has
 \begin{equation}\label{eq:gleicheReg}
  \Curl P\in W^{m,\,p}(\Omega,\R^{3\times3})\quad \Leftrightarrow\quad \dev\Curl P\in W^{m,\,p}(\Omega,\R^{3\times3}).
 \end{equation}
 One might, therefore, wonder whether the spaces ~ $W^{1,\,p}(\dev \sym \Curl; \Omega; \R^3)\coloneqq W^{1,\,p,\,p}(\dev \sym \Curl; \Omega; \R^3)$ ~ and ~ $W^{1,\,p}(\sym \Curl; \Omega; \R^3)$ ~ are actually identical, where 
 \[
  W^{1,\,p}(\sym \Curl; \Omega\corrected{, \R^{3\times3}})\coloneqq\{P\in L^p(\Omega,\R^{3\times 3}) \mid \sym\Curl P \in L^p(\Omega,\R^{3\times 3})\}\,.
 \]
 \corrected{We first note that clearly $W^{1,\,p}(\sym \Curl; \Omega, \R^{3\times3}) \subset W^{1,\,p}(\dev \sym \Curl;\Omega, \R^{3\times3})$ and that
the natural norm $\norm{P}_{L^p} +  \norm{\dev \sym \Curl P}_{L^p}$ on $W^{1,\,p}(\dev \sym \Curl;\Omega, \R^{3\times3})$ is weaker than the natural
norm on $W^{1,\,p}(\sym \Curl; \Omega, \R^{3\times3})$. Thus, in view of the open mapping theorem, the two spaces are identical if and only if the two natural norms are equivalent.
In view of the second estimate in \eqref{eq:normequiv-top} (which follows directly form \eqref{eq:PtbedevsymPtb} by dividing by $\norm{b}^2$) one might expect that this is really the case.
Indeed, using the reasoning in Section \ref{sec:Formalism}, we see that the algebraic identity \eqref{eq:PtbedevsymPtb}
shows that }
 \begin{equation}\label{eq:fuerLap}
 \Delta \sym \Curl P = L(\D^2\dev\sym\Curl P)
\end{equation}
\corrected{in the sense of distributions. The identity \eqref{eq:fuerLap} yields interior estimates for all compactly contained subsets $\Omega'$ of $\Omega$ of the form
\begin{equation}
  \norm{\sym \Curl P}_{L^p(\Omega')} \le  C(\Omega')\, (\norm{\dev \sym \Curl P}_{L^p(\Omega)} + \norm{ P}_{L^p(\Omega)})
\end{equation}
but we will see in the proof of  Theorem \ref{thm:spaces} assertion \ref{item:4thm:spaces} below that this is not enough to obtain equivalence of the norm on the
full set $\Omega$ because we do not impose boundary conditions on $P$.

To illustrate the obstruction to a global estimate, let use consider the following example. Let  $D$ be the unit ball in $\R^2$ and consider the spaces
$W^{2,2}(D)$ and $W^{2,2}_\Delta(D)\coloneqq \{ u \in W^{1,2}(D) \mid \Delta u = 0\}$ with norms $\norm{u}_{W^{2,2}} = \norm{u}_{L^2(\Omega)} + \norm{\D u}_{L^2(\Omega)} +
 \norm{\D^2 u}_{L^2(\Omega)}$
 and $\norm{u}_{\Delta} =  \norm{u}_{L^2(\Omega)} + \norm{\D u}_{L^2(\Omega)} + \norm{\Delta u}_{L^2(\Omega)}$, respectively. Since $\Delta$ is an elliptic operator, we have
 interior estimates $\norm{u}_{W^{2,2}(\Omega')} \le C(\Omega') \,\norm{u}_{\Delta}$, but the norms are not equivalent since for the harmonic functions
 $f_k(x) \coloneqq \Re(e^{k(x_1 + \komplexI\, x_2)})$ we get $\lim_{k \to \infty} \|u\|_{W^{2,2}}/ \|u\|_{\Delta} = \infty$. 
  The reason: while  the symbol $\sigma(\xi) = -(\xi_1^2 + \xi_2^2)$ of the operator $\Delta$ has no non-trivial real zeroes (this is ellipticity), it does 
 have the non-trivial complex zeroes $\xi_1 = k$, $\xi_2 = \komplexI\, k$. This allows us to construct the ‘bad’  functions $f_k$.  A similar analysis of the action of the matrix-valued symbols of the operators $\sym \Curl$ and $\dev \sym \Curl$ on $\C^3$ will allow us below
 to construct maps $P_k$ which show that the norms $\norm{\dev \sym  \Curl P}_{L^p(\Omega)} + \norm{P}_{L^p(\Omega)}$ and $\norm{\sym \Curl P}_{L^p(\Omega)} + \norm{ P}_{L^p(\Omega)}$
 are not equivalent if $\Omega$ is a bounded domain. By contrast, one can use Fourier transform to show that  the norms are equivalent for periodic $P$ or $P \in L^p(\R^3,\R^{3 \times 3})$, which we show for the convenience of the reader in Appendix \ref{App:Fourier}.
 For the latter purposes we start with the following proposition.}

 \begin{proposition}   \label{pr:miklin_hoermander}  Let $V$ be a finite-dimensional vectorspace and denote by $\mathrm{Lin}(V,V)$ the space of linear maps
 from $V$ to $V$.
 Let $\mathbb A$ and $\mathbb{\widetilde A}$ be linear maps from $\R^n$ to $\mathrm{Lin}(V,V)$. 
 Assume that 
 \begin{equation}  \label{eq:equal_kernel}
\mathbb{\widetilde A}(\xi)  a= 0 \quad  \forall\, \xi \in \R^n \setminus \{0\} \quad  \forall\, a \in \ker \mathbb{A}(\xi) 
 \end{equation}
 and
  \begin{equation} \label{eq:constant_rank}
\dim \ker \mathbb{A}  \quad \text{is constant on $\R^n \setminus \{0\}.$}
\end{equation}
Define differential operators by 
$$ \mathcal A = \mathbb{A}(\nabla) \coloneqq \sum_{j=1}^n \mathbb{A}(e_j) \partial_j \quad \text{and} \quad    \widetilde{ \mathcal A} = \widetilde{\mathbb{A}}(\nabla) \coloneqq\sum_{j=1}^n \widetilde{\mathbb{A}}(e_j) \partial_j. $$
Then for each $p \in (1, \infty)$ there exists a constant  $c = c(p)$
such that 
\begin{align}  \label{operator_bound_Rn}  
\norm{\widetilde { \mathcal A} f}_{L^p(\R^n,V)}& \le c\, \norm{ \mathcal A f}_{L^p(\R^n, V)}  \quad \forall\, f \in L^p(\R^n, V)
\intertext{and}
\label{operator_bound_Tn}  
\norm{\widetilde{ \mathcal A} f}_{L^p(\T^n,V)} &\le c\,  \norm{ \mathcal A f}_{L^p(\T^n, V)}  \quad \forall\, f \in L^p(\T^n, V)
\end{align}
in the distributional sense.
 \end{proposition}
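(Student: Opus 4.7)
The plan is to reduce both estimates to a Fourier-multiplier bound of Mikhlin--H\"ormander type. Because $\mathbb{A}$ and $\widetilde{\mathbb{A}}$ are linear in $\xi$, the symbols of $\mathcal{A}$ and $\widetilde{\mathcal{A}}$ are $\komplexI\,\mathbb{A}(\xi)$ and $\komplexI\,\widetilde{\mathbb{A}}(\xi)$, both homogeneous of degree one. Hypothesis \eqref{eq:equal_kernel} says $\ker \mathbb{A}(\xi) \subseteq \ker \widetilde{\mathbb{A}}(\xi)$ for all $\xi \ne 0$, so $\widetilde{\mathbb{A}}(\xi)$ factors through $\mathbb{A}(\xi)$, and hypothesis \eqref{eq:constant_rank} ensures that this factorization can be chosen smooth in $\xi$.

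Concretely, I would set
\[
M(\xi) \coloneqq \widetilde{\mathbb{A}}(\xi)\,\mathbb{A}(\xi)^{+}, \qquad \xi\in\R^n\setminus\{0\},
\]
where $\mathbb{A}(\xi)^{+}$ denotes the Moore--Penrose pseudoinverse. Since $\mathbb{A}(\xi)^{+}\mathbb{A}(\xi)$ is the orthogonal projection onto $(\ker \mathbb{A}(\xi))^{\perp}$ and $\widetilde{\mathbb{A}}(\xi)$ annihilates $\ker \mathbb{A}(\xi)$ by \eqref{eq:equal_kernel}, we obtain $M(\xi)\,\mathbb{A}(\xi)\,v = \widetilde{\mathbb{A}}(\xi)\,v$ for every $v\in V$ and every $\xi\ne 0$. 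Linearity of $\mathbb{A}$ together with \eqref{eq:constant_rank} implies that the nonzero singular values of $\mathbb{A}(\xi)$ are bounded below by $c\,|\xi|$ (by compactness on the unit sphere), and a contour-integral representation of the spectral projector onto the nonzero part of the spectrum of $\mathbb{A}(\xi)^{*}\mathbb{A}(\xi)$ then shows that $\mathbb{A}(\xi)^{+}$ is smooth on $\R^n\setminus\{0\}$. Since $\mathbb{A}^{+}$ is homogeneous of degree $-1$, $M$ is homogeneous of degree $0$, and therefore $M\in C^{\infty}(\R^n\setminus\{0\};\mathrm{Lin}(V,V))$ satisfies the classical Mikhlin estimates $|\xi|^{|\alpha|}\,\|\partial^{\alpha}M(\xi)\| \le C_{\alpha}$ for every multi-index $\alpha$.

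For $f\in\mathscr{S}(\R^n,V)$ we then have
\[
\widehat{\widetilde{\mathcal{A}}\,f}(\xi) = \komplexI\,\widetilde{\mathbb{A}}(\xi)\,\hat{f}(\xi) = M(\xi)\,\komplexI\,\mathbb{A}(\xi)\,\hat{f}(\xi) = M(\xi)\,\widehat{\mathcal{A}\,f}(\xi),
\]
so the Mikhlin--H\"ormander theorem applied componentwise (after choosing a basis of $V$) delivers \eqref{operator_bound_Rn} for every $p\in(1,\infty)$; density of $\mathscr{S}(\R^n,V)$ in $L^p(\R^n,V)$ completes this case. For the torus \eqref{operator_bound_Tn}, both $\mathcal{A}$ and $\widetilde{\mathcal{A}}$ annihilate constants, so one may assume $\int_{\T^n}f\,\intd{x}=0$; on the remaining Fourier modes $k\in\Z^{n}\setminus\{0\}$ the identity $\widetilde{\mathbb{A}}(k)=M(k)\,\mathbb{A}(k)$ still holds, and the periodic Mikhlin--H\"ormander theorem (equivalently, de Leeuw's transference principle applied to the multiplier just constructed) yields the claimed bound.

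The main obstacle is the smooth dependence of $\mathbb{A}(\xi)^{+}$ on $\xi$ together with the Mikhlin-type bounds on its derivatives: in general the Moore--Penrose inverse is only continuous where the rank is locally constant, and turning this into quantitative multiplier bounds requires control of all derivatives in terms of the smallest nonzero singular value. The combination of linearity (and hence homogeneity) and the constant-rank hypothesis \eqref{eq:constant_rank} is exactly what produces a uniform spectral gap between $0$ and the rest of the spectrum of $\mathbb{A}(\xi)^{*}\mathbb{A}(\xi)$ on each sphere $\{|\xi|=r\}$, which allows the holomorphic functional calculus to provide smooth spectral projectors and thus a smooth pseudoinverse on all of $\R^n\setminus\{0\}$.
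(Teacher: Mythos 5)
Your proposal is correct and follows essentially the same route as the paper: the operator $\mathbb{Q}(\xi)$ that the paper defines by the conditions $\mathbb{Q}(\xi)\mathbb{A}(\xi)=\mathrm{Id}-\mathbb{P}(\xi)$ and $\mathbb{Q}(\xi)\equiv 0$ on $(\mathrm{range}\,\mathbb{A}(\xi))^{\perp}$ is precisely the Moore--Penrose pseudoinverse $\mathbb{A}(\xi)^{+}$, so your multiplier $M(\xi)=\widetilde{\mathbb{A}}(\xi)\mathbb{A}(\xi)^{+}$ coincides with the paper's $\mathbb{M}(\xi)=\widetilde{\mathbb{A}}(\xi)\mathbb{Q}(\xi)$. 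The only difference is that you spell out the smoothness of $\xi\mapsto\mathbb{A}(\xi)^{+}$ via the uniform spectral gap of $\mathbb{A}(\xi)^{*}\mathbb{A}(\xi)$ on the unit sphere and a contour-integral spectral projector, whereas the paper simply asserts smoothness from the constant-rank hypothesis; this is a helpful elaboration rather than a departure.
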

 
 \begin{proof} This is well-known, cf.~e.g.~\cite[pp.~1362--1365]{FonsecaMueller1999quasiconvexity} or \cite[Section IV.3]{Stein1970singints}. We recall the argument for the convenience of the reader.   We focus on \eqref{operator_bound_Rn}, the proof
 of  \eqref{operator_bound_Tn}  is analogous. If suffices to show \eqref{operator_bound_Rn}  for $f \in C_c^\infty(\R^n, V)$. Then the general case
 follows by approximation.  We will construct a linear bounded operator $\mathcal M: L^p(\R^n, V) \to L^p(\R^n, V)$ such that
 \begin{equation}  \label{eq:equality_tilde_A_smooth}
  \widetilde{\mathcal A} f = \mathcal M \mathcal A f  \quad \forall f \in C_c^\infty(\R^n, V).
  \end{equation}
For $ \xi \in \R^n\setminus \{0\}$ we define
\begin{align} \mathbb P(\xi): V \to V \quad \text{as the orthogonal projection onto $\ker \mathbb A(\xi)$}
\intertext{and we define $\mathbb Q(\xi):  V\to V$ by}
 \mathbb Q(\xi) \mathbb{A}(\xi)  =   \mathrm{Id} - \mathbb P(\xi), \quad \mathbb Q \equiv 0 \quad \text{on $(\mathrm{range}\, \mathbb A(\xi))^\perp$.}
\end{align}
It follows from  \eqref{eq:constant_rank}
that $ \xi \mapsto \mathbb P(\xi)$ is smooth and homogeneous of degree zero on $\R^n \setminus \{0\}$, while
$ \xi \mapsto \mathbb Q(\xi)$ is smooth and homogeneous of degree $-1$.  
For $\xi \in \R^{n}\setminus \{0\}$ define
\begin{equation} \mathbb M(\xi)\coloneqq\widetilde{\mathbb{A}}(\xi) \mathbb Q(\xi).\end{equation}Then $\mathbb M$ is homogeneous of degree zero and smooth on the unit sphere $\mathbb{S}^{n-1}$ of $\R^n$. 
For $f \in C_c^\infty(\R^n, V)$ define 
\begin{equation}\mathcal M f = (\mathcal F)^{-1} \mathbb M \mathcal F f\end{equation}
where $\mathcal F$ denotes the Fourier transform. By the Mikhlin-H\"ormander multiplier theorem $\mathcal M$ has a unique extension to
a bounded operator on $L^p(\R^n,V)$. Moreover we have 
\begin{equation} \mathbb M(\xi) \mathbb A(\xi) =\widetilde{ \mathbb A}(\xi) \mathbb Q(\xi) \mathbb A(\xi) = \widetilde{ \mathbb A}(\xi) - \widetilde{\mathbb A}(\xi) \mathbb P(\xi)
=  \widetilde{ \mathbb A}(\xi).
\end{equation}
Here we used the assumption  \eqref{eq:equal_kernel} in the last identity. 
Now 
 \eqref{eq:equality_tilde_A_smooth} follows from the definition of $\mathcal M$. 
 \end{proof}

\corrected{On bounded sets we will make use of the following behavior.}
 
   \begin{proposition} \label{pr:k-growth} Let $\Omega \subset \R^3$ be bounded, open and non-empty.  Let $z = x_1 + \komplexI\, x_2$ and
 let $q_k(x) = z^k$. 
 Then  
 $$ \lim_{k \to \infty} \frac{\norm{k  q_{k-1}}_{L^p(\Omega\corrected{,\C})}}{ \norm{q_k}_{L^p(\Omega\corrected{,\C})}} = \infty.$$ 
 \end{proposition}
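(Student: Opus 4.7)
The plan is to reduce the claim to a one-line elementary inequality that uses only the boundedness of $\Omega$. Set $\rho(x) \coloneqq |z| = (x_1^2 + x_2^2)^{1/2}$, so that $|q_k(x)| = \rho(x)^k$, and define $a_k \coloneqq \int_\Omega \rho^{kp}\,\intd{x}$. Then
\begin{equation*}
    \frac{\norm{k\,q_{k-1}}_{L^p(\Omega)}}{\norm{q_k}_{L^p(\Omega)}} = k\,\left(\frac{a_{k-1}}{a_k}\right)^{1/p},
\end{equation*}
so it suffices to show that this last expression tends to $+\infty$.

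Since $\Omega$ is bounded, $R \coloneqq \sup_{x\in\Omega}\rho(x)$ is finite, and on $\Omega$ the pointwise inequality $\rho \le R$ gives $\rho^{kp} \le R^p\,\rho^{(k-1)p}$. Integrating over $\Omega$ yields $a_k \le R^p\,a_{k-1}$, i.e.\ $(a_{k-1}/a_k)^{1/p} \ge 1/R$. Hence the ratio above is bounded below by $k/R$, which tends to $+\infty$ as $k \to \infty$.

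The only book-keeping point is to check that $a_k > 0$, so that the quotient is well-defined. Since $\Omega$ is open and non-empty, it contains a non-degenerate ball, which cannot be contained in the $x_3$-axis; therefore $\rho > 0$ on a subset of $\Omega$ of positive Lebesgue measure, so $a_k > 0$ and $R > 0$. No real obstacle arises here: the trivial pointwise bound $\rho \le R$ absorbs the entire analytic content, and the factor $k$ appearing in the numerator does the rest.
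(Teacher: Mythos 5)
Your proof is correct, and it is in fact more direct than the paper's. You observe that the ratio equals $k\,(a_{k-1}/a_k)^{1/p}$ with $a_k=\int_\Omega \rho^{kp}\,\intd{x}$, and obtain $a_k\le R^p a_{k-1}$ from the single pointwise bound $\rho\le R=\sup_\Omega\rho<\infty$ (which uses only the boundedness of $\Omega$), giving the ratio a lower bound $k/R\to\infty$. The well-definedness check ($a_k>0$ because $\Omega$ is open, hence meets $\{\rho>0\}$ in a set of positive measure) is exactly right. The paper instead introduces the cylinder $U_\delta=\{x_1^2+x_2^2<\delta^2\}$ and argues in two steps: the analogous ratio restricted to $\Omega\setminus U_\delta$ blows up, and the $L^p$-mass of $q_k$ concentrates on $\Omega\setminus U_\delta$ as $k\to\infty$. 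That route still implicitly relies on $\sup_\Omega\rho<\infty$ for the first step, so the decomposition buys nothing for this statement and is avoidable; in fact the inequality the paper quotes in its first step, $|q_{k-1}/q_k|\le\delta^{-1}$ on $\Omega\setminus U_\delta$, bounds the ratio from above rather than below, whereas the bound actually needed is $|q_{k-1}/q_k|=1/\rho\ge 1/R$ — precisely the one your argument supplies. So your version not only simplifies the proof but also makes the relevant pointwise inequality explicit.
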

 
 \begin{proof} For $\delta > 0$ define $U_\delta := \{ x  \in \R^3 \mid x_1^2 + x_2^2  < \delta^2\}$.  
  Since $\corrected{|}{q_{k-1}/ q_k}\corrected{|} \le \delta^{-1}$ on $\Omega \setminus  U_\delta$
 we have
\begin{equation}  \lim_{k \to \infty} \frac{\norm{k  q_{k-1}}_{L^p(\Omega \setminus U_\delta\corrected{,\C})}}{ \norm{q_k}_{L^p(\Omega \setminus U_\delta\corrected{,\C})}} = \infty.\end{equation}                                                                                                                                                    
 Now the assertion follows from the fact that
 $$   \lim_{k \to  \infty}  \frac{ \norm{q_k}_{L^p(\Omega \setminus U_\delta\corrected{,\C})}}{  \norm{q_k}_{L^p(\Omega\corrected{,\C})}} = 1$$
 whenever $\delta > 0$ is so small that $\Omega \setminus U_{2\delta}$ has positive measure.
 \end{proof}
\corrected{
 With these preparations in hand we arrive at our final result.
}

 \begin{theorem}\label{thm:spaces}  The following assertions hold for $p \in (1, \infty)$. 
\begin{enumerate}
\item (whole space $\R^3$)  \label{it:comparison1} There exists a constant $c = c(p)$ such that for $P \in L^p(\R^3, \R^{3 \times 3})$
\begin{equation}
\| \sym  \Curl P\|_{L^p(\R^3, \R^{3 \times 3})} \le c\, \|  \dev \sym  \Curl P\|_{L^p(\R^3, \R^{3 \times 3})};
\end{equation}
\item  (periodic functions)   \label{it:comparison2}  if $\T^3 = \R^3/ \Z^3$ then for all $P \in L^p(\T^3, \R^{3 \times 3})$
\begin{equation}
\norm{ \sym  \Curl P}_{L^p(\R^3, \R^{3 \times 3})} \le c\, \norm{  \dev \sym  \Curl P}_{L^p(\R^3, \R^{3 \times 3})};
\end{equation}
\item  (half-spaces)   \label{it:comparison3}  if $\Omega$ is a half-space then for $P \in L^p(\Omega,\R^{3 \times 3})$ the  seminorms 
$\| \sym  \Curl P\|_{L^p(\Omega, \R^{3 \times 3})}$ and   $ \|  \dev \sym  \Curl P\|_{L^p(\Omega, \R^{3 \times 3})}$
are not equivalent;
\item \label{item:4thm:spaces} (bounded sets)    \label{it:comparison4} If $\Omega\subset\R^3$ is a bounded, open, non-empty set then
$$W^{1,\,p}(\sym \Curl; \Omega, \R^3) \ne W^{1,\,p}(\dev \sym \Curl; \Omega, \R^3).$$
\end{enumerate}
\end{theorem}

\paragraph{Notation}  \quad
In this subsection we  use the notation
$$ \skalarProd{a}{b} \coloneqq \sum_{j=1}^3  a_j b_j  \quad \text{for $a, b \in \C^3$}.$$
Note that this is different from the usual sesquilinear form $\sum_j a_j \overline b_j$ where $\overline z$ denotes the complex conjugate of a complex number $z$.  In particular $\skalarProd{a}{a}$ is \textbf{not} nonnegative on $\C^3$.

 \begin{proof}
The first and second  assertion for the whole space and periodic functions follow from the estimate \eqref{eq:normequiv-top}
\begin{equation}  \label{eq:bound_real}
 \forall\,  \xi \in \R^3   \quad  \forall\,  \widehat P \in \R^{3 \times 3} \quad  \norm{\sym (\widehat P \times \xi)} \le (1+ \sqrt 3)  
 \norm{\dev \sym (\widehat P \times \xi)},
\end{equation}
 the fact that  $\dim \{ \widehat P \mid \dev \sym (\widehat P \times \xi) = 0\}=4$ is independent of $\xi$ for $\xi \in \R^3 \setminus \{0\}$
 and Proposition~\ref{pr:miklin_hoermander}, applied to the operators ~ $\mathbb A(\xi) P  = \dev \sym (P \times \xi)$ ~ and ~ $\widetilde{\mathbb A}(\xi) P = \sym (P \times \xi)$.
 
 To prove the third and fourth  assertion we first show that
\begin{equation}  \label{eq:nonbound_complex}
\exists\, \xi \in \C^3   \quad  \exists\, \widehat P \in \C^{3 \times 3} : \quad  \dev \sym (\widehat P \times \xi) = 0 \quad \text{and} \quad  \sym (\widehat P \times \xi) \ne 0.
\end{equation}
Then the assertion will follow by standard arguments. One such example is given by
\begin{subequations}
\begin{align}\label{eq:komplexesBsp}
 \widehat P=\begin{pmatrix} 0 & 0 & -1 \\ 0 & 0 & \komplexI \\ 0 & -\komplexI & 0\end{pmatrix} \qquad &\text{and}\qquad \xi=\begin{pmatrix}1 \\ \komplexI \\ 0\end{pmatrix} \quad \Rightarrow \quad \widehat P\times \xi = \begin{pmatrix} \komplexI & -1 & 0 \\ 1 & \komplexI & 0 \\ 0 & 0 & \komplexI\end{pmatrix}
\intertext{so that}
 \sym(\widehat P\times \xi)=\komplexI \cdot \id  \qquad&\corrected{\text{but}}\qquad \dev\sym(\widehat P\times \xi)= 0.\label{eq:symunddevsym}
\end{align}
\end{subequations}
Further examples which fulfill \eqref{eq:nonbound_complex} can be found splitting $\widehat P$ into the symmetric and skew-symmetric part:  $\widehat P = \widehat S + \Anti(\widehat a)$. 
By \eqref{eq:firststepdevsymPtimesb}
\begin{align}   \label{eq:devsym_minus_sym}
 \dev\sym( \widehat P\times \xi)  =  \sym(\widehat P\times \xi) +\frac23\skalarProd{\widehat a}{\xi}\,\id.
\end{align}
Thus it suffices to find $\widehat P \in \C^{3 \times 3}$ and $\xi \in \C^3$ such that
~ $\dev \sym (\widehat P \times \xi) = 0$ ~ and ~ $\skalarProd{\widehat a}{\xi} \ne 0$.
 Indeed, the example in \eqref{eq:komplexesBsp} satisfies these conditions.
 
 Now we show that in a half-space the seminorms $\norm{\sym \Curl  \cdot}_{L^p(\Omega, \R^{3 \times 3})}$ and 
$\norm{ \dev \sym \Curl \cdot }_{L^p(\Omega, \R^{3 \times 3})}$ are not equivalent.
Since the operators $\dev \sym \Curl$ and $\sym \Curl$ interact naturally with rotations it suffices to consider the 
half-space 
$$ \Omega = \{ x \in \R^3 \mid x_1 < 0 \}.$$
Note that the norms are equivalent for real-valued fields $P$ if and only if they are equivalent for complex-valued fields $P$. 
Let $\xi$ and $\widehat P$ be as in \eqref{eq:komplexesBsp}. 
For a constant vector $b \in \R^3$ and a scalar function $\zeta$ we have $\curl (b\,\zeta) = b \times (-\nabla \zeta)$. Since $\Curl$ acts row-wise
we have for a constant matrix
$\widehat P$ the identity $\Curl (\widehat P\, \zeta) =- \widehat  P \times \nabla \zeta$. Thus for all $k\in\N$
\begin{align}\label{eq:curlrechnung2} \Curl  \left(\widehat P\mathrm{e}^{k \skalarProd{\xi}{x}} \right) = 
- \widehat P \times \nabla \mathrm{e}^{k \skalarProd{\xi}{x}} = - k \mathrm{e}^{k \skalarProd{\xi}{x}} (\widehat P \times \xi)
\end{align}
so that with \eqref{eq:symunddevsym} we have
\begin{equation}\label{eq:desymCurl0}
 \sym\Curl  \left(\widehat P\mathrm{e}^{k \skalarProd{\xi}{x}} \right) = -\komplexI\,k\,\mathrm{e}^{k \skalarProd{\xi}{x}}\cdot\id \quad \text{and}\quad \dev\sym\Curl\left(\widehat P\mathrm{e}^{k \skalarProd{\xi}{x}} \right) =0.
\end{equation}
Let $\eta \in C_c^\infty(B(0,2))$ be a cut-off function such that $\eta =1$ in $B(0,1)$ and consider the functions
\begin{equation}
  P_k(x) = \frac1k \widehat P \mathrm{e}^{k \skalarProd{\xi}{x}}  \eta(x).
\end{equation}
Then 
\[
 \Curl P_k(x)=\frac{\eta(x)}{k}\Curl  \left(\widehat P\mathrm{e}^{k \skalarProd{\xi}{x}} \right)-\frac1k\mathrm{e}^{k \skalarProd{\xi}{x}}\widehat P \Anti(\nabla\eta)\overset{\eqref{eq:curlrechnung2}}{=} -\mathrm{e}^{k \skalarProd{\xi}{x}}\left(\widehat P \times \xi+ \frac1k \widehat P \Anti(\nabla\eta)\right)
\]
and with \eqref{eq:symunddevsym} we obtain
\begin{subequations}
\begin{align}
 \norm{\dev \sym \Curl P_k(x)} \le C  \frac1k  e^{kx_1} \sup \norm{\nabla \eta}
 \intertext{and}
\sym \Curl P_k =  -\komplexI\,\mathrm{e}^{k\, x_1 + \komplexI\, k\, x_2}\cdot\id     \quad   \text{in $B(0,1)$}.
\end{align}
 \end{subequations}
From this we easily conclude that $\norm{\sym \Curl P_k}_p/ \norm{\dev \sym \Curl P_k}_p \to \infty$ ~ which shows claim \ref{it:comparison3}.

Finally, we prove the last assertion \ref{it:comparison4}. Let $\Omega$ be a bounded, open, non-empty set.
It suffices to show that in $W^{1,\, p}(\sym\Curl; \Omega, \C^{3 \times 3})$ the \corrected{norms} ~
$\corrected{\norm{\cdot}_{L^p(\Omega, \C^{3 \times 3})}+}\norm{\sym\Curl  \cdot}_{L^p(\Omega, \C^{3 \times 3})}$ ~ and ~
$\corrected{\norm{\cdot}_{L^p(\Omega, \C^{3 \times 3})}+}\norm{ \dev \sym \Curl \cdot }_{L^p(\Omega, \C^{3 \times 3})}$ ~ are not equivalent. Indeed, this implies that also in ~ $W^{1,\, p}(\sym\Curl; \Omega, \R^{3 \times 3})$ ~ the norms 
$\corrected{\norm{\cdot}_{L^p(\Omega, \R^{3 \times 3})}+}\norm{\sym\Curl  \cdot}_{L^p(\Omega, \R^{3 \times 3})}$  and 
$\corrected{\norm{\cdot}_{L^p(\Omega, \R^{3 \times 3})}+}\norm{ \dev \sym \Curl \cdot }_{L^p(\Omega, \R^{3 \times 3})}$ are not equivalent. Thus, since the identity map $$i : W^{1,\, p}(\sym\Curl; \Omega, \R^{3 \times3}) \to W^{1,\, p}(\dev \sym\Curl; \Omega, \R^{3 \times3})$$ is continuous it then follows from the open mapping theorem that  $$W^{1,\, p}(\dev \sym\Curl; \Omega, \R^{3 \times3})\ne W^{1,\, p}( \sym\Curl; \Omega, \R^{3 \times3}).$$

Let $\xi$ and $\hat P$ be again as in \eqref{eq:komplexesBsp}. Set  $z = x_1 + \komplexI x_2$ and 
\begin{equation}  P_t(x) := \widehat P \mathrm{e}^{t\skalarProd{\xi}{x}} = \widehat P \mathrm{e}^{t z}.
\end{equation}
Then as in \eqref{eq:desymCurl0}
\begin{equation}  \label{eq:devsym_kernel_t}  \dev \sym \Curl P_t = 0
\end{equation}
and
\begin{equation}   \label{eq:sym_kernel_t} 
 \sym \Curl P_t = -\komplexI\, t\,  \mathrm{e}^{t z}\cdot \id.
\end{equation}
Let $Q_k(x) =  \widehat P z^k$. Taking the $k$-th derivative of  \eqref{eq:devsym_kernel_t}  and 
 \eqref{eq:sym_kernel_t}  and evaluating at $t=0$ we get, for all $k \in \N$,
 \begin{equation}
 \dev \sym \Curl Q_k = 0,   \quad \sym \Curl Q_k =-\komplexI\,k\,z^{k-1}\cdot \id.
 \end{equation}
 It follows from Proposition   \ref{pr:k-growth} that 
 $$ \lim_{k \to \infty} \frac{ \norm{\sym \Curl  Q_k}_{L^p(\Omega, \C^{3 \times 3})} }
 {   \norm{  Q_k}_{L^p(\Omega, \C^{3 \times 3})}  +  \norm{\dev \sym \Curl  Q_k}_{L^p(\Omega, \C^{3 \times 3})    }}         
 = \infty.
 $$
 This concludes the proof of the theorem.
\end{proof}

\corrected{
\begin{remark}
 Assertion \ref{item:4thm:spaces} of Theorem \ref{thm:spaces} is complemented by the following two strict inclusions:
 \begin{equation}
 W^{1,\,p}(\Omega,\R^{3\times3})\subsetneqq W^{1,\,p}(\Curl;\Omega,\R^{3\times3})\subsetneqq W^{1,\,p}(\sym\Curl;\Omega,\R^{3\times3}).
\end{equation}
 To see that the first inclusion is strict,  we may use functions of the form $P_k = \D u_k$
 where $u_k = w(kx)$ and $w: \R^3 \to \R^3$ is periodic, to see that the corresponding norms
 are not equivalent. To see that the second inclusion is strict we can use functions of the form
 $P_k(x) = \zeta(kx)\cdot \id$ where $\zeta: \R^3 \to \R$ is periodic, and observe that $\sym \Curl P_k =k\,  \sym (\Anti(\nabla \zeta)(kx)) = 0$.
\end{remark}
}

\subsubsection*{Acknowledgment} The authors thank Ionel-Dumitrel Ghiba, University of Iasi, Romania, for helpful discussions \corrected{and also the anonymous referee for his valuable comments and suggestions}. This work was initiated in the framework of the  Priority Programme SPP 2256 'Variational Methods for Predicting Complex Phenomena in Engineering Structures and Materials'  funded by the Deutsche Forschungsgemeinschaft (DFG, German research foundation), Project-ID  422730790, 
by a collaboration of projects  'Mathematical analysis of microstructure in supercompatible alloys'  (Project-ID 441211072) and  'A variational scale-dependent transition scheme - from Cauchy elasticity to the relaxed micromorphic continuum'   (Project-ID   440935806).
Peter Lewintan and Patrizio Neff were supported by the Deutsche Forschungsgemeinschaft (Project-ID 415894848). Stefan M\"uller has also been supported by the Deutsche Forschungsgemeinschaft through the Hausdorff Center for Mathematics (GZ EXC 2047/1, Projekt-ID 390685813) and the collaborative research centre 'The mathematics of emerging effects' (CRC 1060, Projekt-ID 211504053).

 \printbibliography

  {\footnotesize
 \begin{alphasection}
\section{Appendix}

\subsection{Geometrical interpretation of tangential boundary conditions}
\subsubsection{The case $P\times \nu= 0$  following \cite{Gurtin2005gradientplastBurger}}
In this appendix we provide the reader with the development of Gurtin and Needleman \cite{Gurtin2005gradientplastBurger} adapted to our notation.
Let $\nu\in\R^3$ be a unit vector. For the projection onto the plane perpendicular to $\nu$ we
can consider one of the following matrix representations:
\begin{equation}\label{eq:projektion}
 \mathbb{P}_\nu = \id - \nu\otimes \nu \overset{\eqref{eq:prod_id}}{=} -\Anti(\nu)\times \nu \overset{\eqref{eq:anti^2}}{=} -\Anti(\nu)\Anti(\nu) = \Anti(\nu)^T\Anti(\nu)\,.
\end{equation}
The last expression shows directly that the vector product of $\mathbb{P}_\nu$ and $\nu$ commutes:
\begin{equation}\label{eq:etimesPe}
 \nu\times\mathbb{P}_\nu = -\Anti(\nu)\Anti(\nu)\Anti(\nu)= \mathbb{P}_\nu\Anti(\nu)=\mathbb{P}_\nu\times \nu\underset{\norm{\nu}=1}{\overset{\eqref{eq:anti^3}}{=}}\Anti(\nu)\in\so(3)
\end{equation}
and we have moreover for any $(3\times 3)$-matrices $P$ and $H$:
\begin{equation}
\begin{split}
\skalarProd{P\times \nu}{H} = \skalarProd{P\Anti(\nu)}{H}&= -\skalarProd{P}{H\Anti(\nu)} = -\skalarProd{P}{H\times \nu}
 \overset{\mathclap{\eqref{eq:etimesPe}}}{=}\ \skalarProd{P\,\mathbb{P}_\nu\Anti(\nu)}{H} = - \skalarProd{P\,\mathbb{P}_\nu}{H\times \nu}.
\end{split}
\end{equation}
and also
\begin{align}
 \norm{P \,\mathbb{P}_\nu}^2&=\skalarProd{P \,\mathbb{P}_\nu}{P \,\mathbb{P}_\nu}\overset{\eqref{eq:projektion}}{=} \skalarProd{P\Anti(\nu)\Anti(\nu)}{P\Anti(\nu)\Anti(\nu)} = -\skalarProd{P\Anti(\nu)}{P\Anti(\nu)\Anti(\nu)\Anti(\nu)}\\
 &\underset{\norm{\nu}=1}{\overset{\eqref{eq:anti^3}}{=}}\skalarProd{P\Anti(\nu)}{P\Anti(\nu)} = \norm{P\times \nu}^2\\
 & = -\skalarProd{P}{P\Anti(\nu)\Anti(\nu)}\overset{\eqref{eq:projektion}}{=} \skalarProd{P}{P(\id-\nu\otimes \nu)}= \skalarProd{P}{P}-\skalarProd{P}{P\,\nu\otimes \nu}= \norm{P}^2-\skalarProd{P\,\nu}{P\,\nu}\\
 & = \norm{P}^2-\norm{P\,\nu}^2\,.
\end{align}
Thus, ~ $P \,\mathbb{P}_\nu=0$ ~ if and only if ~ $P\times \nu=0$. The latter condition can be tested by applying the scalar product with deviatoric (trace-free) matrices, it holds:
\begin{equation}\label{eq:gurtin_dev}
 \skalarProd{P\times \nu}{D}= 0 \ \forall D \text{ with $\tr D=0$} \quad \Leftrightarrow \quad P\times \nu =0.
\end{equation}
This implies ~ $\dev(P\times \nu)=0$ ~ if and only if ~ $P\times \nu =0$~ and extends, of course, to the case of arbitrary non-zero vector $\nu\in\R^3$, cf. also our Observation 2.2 in \cite{agn_lewintan2020KornLp_tracefree} and shows
\begin{equation}
 \norm{\dev(P\times \nu)}\le\norm{P\times \nu}\le C\cdot\norm{\dev(P\times \nu)}.
\end{equation}

To establish \eqref{eq:gurtin_dev}, let $H$ be an arbitrary matrix and consider the trace-free matrix $D\coloneqq H -\tr(H)\nu\otimes \nu$. By the assumption we have
\begin{equation}
 0 = \skalarProd{P\times \nu}{D}= \skalarProd{P\times \nu}{H -\tr(H)\nu\otimes \nu}= \skalarProd{P\times \nu}{H}+\tr(H)\skalarProd{P}{(\nu\otimes \nu)\times \nu}= \skalarProd{P\times \nu}{H}.
\end{equation}
Since $H$ is arbitrary, it follows $P\times \nu=0$.
\subsubsection{The compatible case $\D u\times \nu =0$}
Let $\Gamma$ be a relatively open (non-empty) connected subset of the boundary $\partial \Omega$ and assume that $P=\D u$ is compatible. The condition $\D u\times\nu_{|\Gamma}\equiv0$ is equivalent to $\D u\, \mathbb{P}_\nu{}_{|\Gamma}\equiv0$ which can also be written as $\D u\,\tau{}_{|\Gamma}\equiv0$ for all tangential directions on $\Gamma$, meaning that all tangential derivatives of $u$ along $\Gamma$ are vanishing. Thus, $u$ has to be constant along $\Gamma$, since for any curve $\gamma:[0,1]\to\Gamma$ on $\Gamma$ we have $\frac{\mathrm{d}}{\mathrm{d} s}u(\gamma(s))=\D u(\gamma(s))\,\gamma'(s)=0$, cf.~\cite[p.~35]{Girault1986FEM}.

\subsubsection{The case $\sym(P\times \nu)=0$}
If ~ $\sym(P\times \nu)=0$, ~ then there exists a vector $a\in\R^3$ so that  ~$P\times \nu =\Anti(a)$. ~ Hence,
\begin{equation}
 a\times \nu = \Anti(a)\,\nu = (P\times \nu)\, \nu= P \Anti(\nu)\, \nu = P(\nu\times \nu)=0
\end{equation}
and $a$ has to be of the form $a=\alpha\cdot \nu$ with $\alpha\in\R$. Thus we have $P\times \nu=\alpha\cdot\Anti(\nu)=\alpha\cdot\id\times \nu$ and we conclude
\begin{equation}
 \sym(P\times \nu)=0 \quad \Leftrightarrow \quad P\times \nu = \alpha\cdot\Anti(\nu) \quad \Leftrightarrow \quad (P-\alpha\cdot\id)\times \nu=0 \quad \Leftrightarrow\quad (P-\alpha\cdot\id)\,\mathbb{P}_\nu=0 \quad \text{for an $\alpha\in\R$}.
\end{equation}
In a similar way to \eqref{eq:gurtin_dev}, it follows that
\begin{equation}\label{eq:gurtin_dev_sym}
 \skalarProd{\sym(P\times \nu)}{D}= 0 \ \forall D \text{ with $\tr D=0$} \quad \Leftrightarrow \quad \sym(P\times \nu) =0
\end{equation}
so that again we deduce that ~ $\dev\sym(P\times \nu)= 0$ ~ if and only if ~ $\sym(P\times \nu)=0$, ~ cf.~our Observation \ref{obs:3}.

\subsubsection{The compatible case $\sym(\D u \times \nu)=0$}\label{sec:flatbdry}
Let $\Gamma$ be a relatively open (non-empty) connected subset of the boundary $\partial \Omega$ and assume that $P=\D u$ is compatible.  By the previous observation the condition $\sym(\D u \times \nu)_{|\Gamma}=0$ is fulfilled if and only if there exists a function $\zeta:\Gamma\to\R$ such that ~ $(\D u-\zeta\cdot\id)\,\mathbb{P}_{\nu}{}_{|\Gamma}=0$. If $u=\alpha\,x+b$, then it is clear that this boundary condition is satisfied. On a flat portion of the boundary we will establish also a converse statement. Indeed, let (after possible rotation) $\Gamma\subseteq\R^2\times\{0\}$ be simply connected and $\gamma:[0,1]\to\Gamma$. By the previous observation we have $\frac{\mathrm{d}}{\mathrm{d} s}u(\gamma(s))=\D u(\gamma(s))\,\gamma'(s)=\zeta(\gamma(s))\gamma'(s)$, so that
$u(\gamma(1))=u(\gamma(0))+\int_0^1\zeta(\gamma(s))\gamma'(s)\intd{s}$ and for a closed curve $\gamma$ we deduce
\begin{equation}\label{eq:condition_gamma}
 0 = \int_0^1\zeta(\gamma(s))\gamma'(s)\intd{s} = \int_0^1
 \begin{pmatrix}\skalarProd{\zeta(\gamma(s))\,e_1}{\gamma'(s)}\\
 \skalarProd{\zeta(\gamma(s))\,e_2}{\gamma'(s)}\\
 \skalarProd{\zeta(\gamma(s))\,e_3}{\gamma'(s)}                                                   \end{pmatrix}
\intd{s} = \int_0^1\begin{pmatrix}\skalarProd{(\widehat{\zeta}(\gamma_1(s),\gamma_2(s)),0)}{(\gamma_1'(s),\gamma_2'(s))}_{\R^2}\\ \skalarProd{(0,\widehat{\zeta}(\gamma_1(s),\gamma_2(s)))}{(\gamma_1'(s),\gamma_2'(s))}_{\R^2} \\ 0\end{pmatrix}\intd{s}
\end{equation}
where in the last step we have used, that $\gamma\subset\R^2\times\{0\}$ has vanishing third component and we have set $\widehat{\zeta}(x,y)=\zeta(x,y,0)$. Since \eqref{eq:condition_gamma} is valid for all connected curves $\gamma\subset\Gamma\subseteq\R^2\times\{0\}$  the vector fields $(\widehat{\zeta},0)^T$ and $(0,\widehat{\zeta})^T$ have to be conservative. Thus,
\begin{equation}
 0 = \curl_{2\D}\begin{pmatrix} \widehat{\zeta}\\0\end{pmatrix} = - \widehat{\zeta,}_{y} \quad \text{and} \quad 0 = \curl_{2\D}\begin{pmatrix} 0\\ \widehat{\zeta}\end{pmatrix} = \widehat{\zeta,}_{x}
\end{equation}
where $\curl_{2\D}v=v_2,x-v_1,y$, so that we conclude $\zeta=\widehat{\zeta}\equiv\operatorname{const}$ and set $\zeta(x,y,0)=\alpha$. The previous observation imply $\frac{\mathrm{d}}{\mathrm{d} s}[u(\gamma(s))-\alpha\gamma(s)]\equiv 0$ for all admissible curves $\gamma$ meaning that along $\Gamma$ the function $u$ has to be of a form $u(x,y,0)=\alpha\cdot(x,y,0)^T +(b_1,b_2,0)^T$.

\subsection{Some basic identities}

We outline some basic identities which played useful roles in our considerations:
\bigskip

\begin{tabular}{:l:l:}
\hdashline
&\\
 1. from linear algebra: & 2. and their formal equivalents from calculus:\\[1ex]
 \begin{minipage}{6.5cm}
  \ (a) \ $a\otimes b$ \quad dyadic product,\\
  \hphantom{ (a) } $\skalarProd{a}{b}=\tr(a\otimes b)$ \quad scalar product,\\
  \hphantom{ (a) } $a\times b=\axl(b\otimes a - a\otimes b )$ \quad vector product,\\
  \hphantom{ (a) } $b\times b = 0$,\\
  \hphantom{ (a) } $\skalarProd{a\times b}{b}=0$,\\
  \hphantom{ (a) } $2\,\skew(a\otimes b) = -\Anti(a\times b)$,
 \end{minipage}&
 \begin{minipage}{7cm}
 \ (a) \ $\D a= a\otimes \nabla$,\\
 \hphantom{ (a) } $\div a = \skalarProd{a}{\nabla}=\tr(\D a)$,\\
 \hphantom{ (a) } $\curl a=a\times (-\nabla)=2\axl\skew (\D a)$,\\
 \hphantom{ (a) } $\curl\nabla\zeta\equiv0$,\\
 \hphantom{ (a) } $\div\curl a \equiv0$,\\
 \hphantom{ (a) } $2\,\skew(\D a) = \Anti(\curl a)$,
 \end{minipage}\\[7.5ex]
 \begin{minipage}{6.5cm}
  \ (b) \ $P\, b$,\\
   \hphantom{ (b) }   $\id\, b = b$,\\
 \hphantom{ (b) }  $\Anti(a)\,b= a\times b = -\Anti(b)\,a$,\\
 \hphantom{ (b) } $ A\,b = (\axl A) \times b$,\\
 \hphantom{ (b) } $(a\otimes b)b = \norm{b}^2a$,\\
 \hphantom{ (b) } $(b\otimes a)b = \skalarProd{a}{b}b = \norm{b}^2a+(a\times b) \times b$,
 \end{minipage}&
 \begin{minipage}{7cm}
 \ (b) \ $\Div P = P\,\nabla$,\\
 \hphantom{ (b) } $\Div(\zeta\cdot\id)=\nabla\zeta$,\\
 \hphantom{ (b) } $\Div (\Anti(a))= -\curl a = \Anti(\nabla)\, a$,\\
 \hphantom{ (b) } $\Div A = -\curl\axl A$,\\
 \hphantom{ (b) } $\Div(\D a)= \Delta a$,\\
 \hphantom{ (b) } $\Div((\D a)^T)= \nabla\div a=\Delta a + \curl \curl a$,
 \end{minipage}\\[7.5ex]
 \ (c) \ $P\times b=P\,\Anti(b)=-(\Anti(b)\,P^T)^T$, &
 \ (c) \ $\Curl P = P \times(-\nabla)=-P\,\Anti(\nabla)$,\\[0.8ex]
 \ (d) \ $\id\times b = \Anti(b)\in\so(3)$, &
 \ (d) \ $\Curl(\zeta\cdot\id)=-\Anti(\nabla \zeta)\in\so(3)$,\\[1.4ex]
  \begin{minipage}{7cm}
 \ (e) \ $(a\otimes b)\times b = 0$,\\
 \hphantom{ (e) } $\frac12(b\otimes a)\times b = \sym(a\otimes b)\times b = -\skew(a\otimes b)\times b $\\
\hphantom{ (e) $\frac12(b\otimes a)\times b$} $= -b \otimes \axl\skew(a\otimes b)$,
  \end{minipage}&
 \begin{minipage}{7.4cm}
 \ (e) \ $\Curl(\D a) \equiv 0$, \\
 \hphantom{ (e) }$\frac12\Curl((\D a)^T) =\Curl(\sym \D a) = - \Curl(\skew \D a)$\\
 \hphantom{ (e) $ \frac12\Curl((\D a)^T)$} $= (\D \axl\skew \D a)^T=  \frac12(\D\,\curl a)^T$,
  \end{minipage}\\[3.5ex]
 \begin{minipage}{6.5cm}
 \ (f) \ Room's formulas:\\
 \hphantom{ (f) } $(\Anti(a))\times b = b \otimes a -\skalarProd{b}{a}\cdot\id$,\\
 \hphantom{ (f) } $A\times b = b \otimes \axl A - \skalarProd{b}{\axl A}\cdot \id$,\\
 \hphantom{ (f) } $(\axl A)\otimes b = (A\times b)^T -\frac12\tr(A\times b)\cdot\id$,\\
 \hphantom{ (f) } $\tr(A\times b) = -2\skalarProd{\axl A}{b}$,
 \end{minipage}
 &
 \begin{minipage}{7cm}
 \ (f) \ Nye's formulas:\\
 \hphantom{ (f) } $\Curl (\Anti(a)) = \div a\cdot\id- (\D a)^T$,\\
 \hphantom{ (f) } $\Curl A = \tr(\D \axl A )\cdot\id-(\D \axl A)^T$,\\
 \hphantom{ (f) } $\D \axl(A) = \frac12\tr(\Curl A)\cdot\id-(\Curl A)^T$,\\
 \hphantom{ (f) } $\tr(\Curl A) = 2\,\div\axl A$,
 \end{minipage}\\[7ex]
 \ (g) \ $\tr(S\times b)=0$,&
 \ (g) \ $\tr(\Curl S)\equiv0$,\\[0.8ex]
 \ (h) \ $(P\times b)^T\times b = -\Anti(b)\,P^T\Anti(b) = -b\times P^T\times b$, &
 \ (h) \ $\inc(P)=\Curl[(\Curl P)^T] = -\nabla\times P^T\times\nabla$,\\[0.8ex]
 \ (i) \ $\big(\id\times b\big)^T\times b = \norm{b}^2\cdot\id - b\otimes b\in\Sym(3)$,&
 \ (i) \  $\inc(\zeta\cdot\id)= \Delta\zeta\cdot \id-\D^2\zeta\in\Sym(3)$,\\[1.4ex]
 \begin{minipage}{6.5cm}
 \ (j) \ $\big((b\otimes a)\times b\big)^T\times b =0$,\\
 \hphantom{ (j) } $\big(\sym(a\otimes b)\times b\big)^T\times b =0$,\\
 \hphantom{ (j) } $\big(\skew(a\otimes b)\times b\big)^T\times b =0$,
 \end{minipage}
 &
 \begin{minipage}{7cm}
 \ (j) \ $\inc((\D a)^T)\equiv0$,\\
 \hphantom{ (j) } $\inc(\sym\D a)\equiv 0$,\\
 \hphantom{ (j) } $\inc(\skew\D a)\equiv 0$,
 \end{minipage}\\[4.5ex]
 \ (k) \ $\big((\Anti(a))\times b\big)^T\times b = -\skalarProd{b}{a}\Anti(b)\in\so(3)$,\hspace*{2ex}&
 \ (k) \ $\inc(\Anti(a)) = -\Anti(\nabla \div a)\in\so(3)$,\\[1.4ex]
  \begin{minipage}{6.5cm}
 \ (l) \ $\big(S\times b\big)^T\times b \in\Sym(3)$,\\
 \hphantom{ (l) } $\tr(\big(S\times b\big)^T\times b)=\norm{b}^2\tr(S)-\skalarProd{S}{b\otimes b}_{\R^{3\times3}}$,
  \end{minipage}
 &
 \begin{minipage}{7cm}
 \ (l) \ $\inc S \in\Sym(3)$,\\
  \hphantom{ (l) } $\tr(\inc S) = \Delta \tr(S)-\div\Div S$,
  \end{minipage}\\[2.5ex]
 \ (m) \ $\dev(P\times b)= P\times b + \frac23\skalarProd{\axl\skew P}{b}\cdot\id$,&
 \ (m) \ $\dev\Curl P = \Curl P -\frac23\div\axl\skew P\cdot \id$,\\[1.4ex]
\begin{minipage}{6.5cm}
 \ (n) \ $[(P\times b)^T\times b]^T=(P^T\times b)^T\times b$,\\
 \hphantom{ (n) } $\sym[(P\times b)^T\times b] = ((\sym P)\times b)^T\times b$, \\
  \hphantom{ (n) } $\skew[(P\times b)^T\times b] = ((\skew P)\times b)^T\times b$,
\end{minipage}
&
\begin{minipage}{7cm}
 \ (n) \ $[\inc(P)]^T = \inc(P^T)$,\\
  \hphantom{ (n) }  $\sym\inc P = \inc\sym P$,\\
  \hphantom{ (n) } $\skew\inc P = \inc\skew P$,
\end{minipage}\\[4ex]
 \ (o) \  $\tr[((S\times b)\times b)^T\times b]=0$, &
 \ (o) \ $\tr(\inc \Curl S)\equiv 0$, \\[0.8ex]
\ (p) \ $\skalarProd{a\times b}{c}=-\skalarProd{a}{c\times b}$, & \\[1.4ex]
   \begin{minipage}{6.5cm}
              \ (q) \ $a\otimes b=0 \quad \Leftrightarrow\quad \dev\sym(a\otimes b)=0$, \\
             \hphantom{ \ (q) \  $a\otimes b=0 \quad$}$\Leftrightarrow \quad \dev\sym(\Anti(a)\times b)=0$,
            \end{minipage}
&  \begin{minipage}{7cm} for ~$\zeta\in\mathscr{D}'(\Omega,\R)$, $a\in\mathscr{D}'(\Omega,\R^3)$, $A\in\mathscr{D}'(\Omega,\so(3))$\\ \hphantom{for ~}$S\in\mathscr{D}'(\Omega,\Sym(3))$ and $P\in\mathscr{D}'(\Omega,\R^{3\times 3})$. \end{minipage} \\[4ex]
\ (r) \ $\dev(P\times b) = 0 \quad \Leftrightarrow \quad P\times b = 0$, & \\[0.8ex]
\ (s) \ $\dev\sym(P\times b) = 0 \quad \Leftrightarrow \quad \sym(P\times b) = 0$, & \\[2ex]
for $a,b \in\R^3$, $S\in\Sym(3)$, $A\in\so(3)$  and $P\in\R^{3\times 3}$, &\\
&\\
\hdashline
\end{tabular}
\bigskip

The expression in (l) reads in more details
\begin{align}
 (S\times b)^T\times b &= -b \times S \times b = - \Anti(b)\, S \, \Anti(b)\notag\\
 & = S(b\otimes b)+(b\otimes b)S-\norm{b}^2S -\tr(S)b\otimes b +(\norm{b}^2\tr(S)-\skalarProd{S}{b\otimes b}_{\R^{3\times3}})\cdot\id\,,
 \intertext{so that the formal equivalent for $\inc S$ has the form}
 \inc S & = \D \Div S +(\D \Div S)^T-\Delta S -\D^2\tr(S)+(\Delta \tr(S)-\div\Div S)\cdot\id\,.
\end{align}

\subsection{The kernel of $\Curl$, $\dev\Curl$ and $\sym\Curl$}
\begin{lemma}
 Let $\Omega\subset\R^3$ be a simply connected open set and $P\in\mathscr{D}'(\Omega,\R^{3\times 3})$. Then we have
 \begin{thmenum}
  \item $\Curl P \equiv 0 $ if and only if $P=\D u$, \label{part_a}
  \item $\dev\Curl P \equiv 0 $ if and only if $P=\alpha\cdot\Anti(x)+\D u$, \label{part_b}
  \item $\sym\Curl P \equiv 0 $ if and only if $P=\zeta\cdot\id+\D u$. \label{part_c}
 \end{thmenum}
 where $\alpha\in\R$, $\zeta\in\mathscr{D}'(\Omega,\R)$ and $u\in\mathscr{D}'(\Omega,\R^3)$.
 \end{lemma}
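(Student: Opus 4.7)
My plan is to prove the three parts in the order stated, using (a) as the foundation and then reducing (b) and (c) to it. For the ``if'' directions, I would verify each identity directly using the algebraic tools already collected in the paper: (a) is immediate from $\curl \nabla \equiv 0$ applied row-wise; for (b) I would invoke Nye's formula $\Curl \Anti(a) = \div a\cdot \id - (\D a)^T$ with $a(x)=x$, giving $\D a = \id$, $\div a = 3$ and hence $\Curl \Anti(x) = 3\id - \id = 2\,\id$, whose deviatoric part vanishes; and for (c) I would use $\Curl(\zeta\cdot\id) = -\Anti(\nabla\zeta)\in \so(3)$, whose symmetric part vanishes. Combining these with the ``if'' part of (a), all three implications ``$\Leftarrow$'' are immediate.

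For the ``only if'' direction of (a), I would apply the distributional Poincar\'e lemma row by row: since $\curl P_i \equiv 0$ for each row of $P$ and $\Omega$ is simply connected, there exist $u_i \in \mathscr{D}'(\Omega)$ with $P_i = \nabla u_i$, and $u = (u_1,u_2,u_3)^T$ gives $P = \D u$. For the ``only if'' direction of (b), starting from $\dev \Curl P \equiv 0$ I would first write $\Curl P = \zeta \cdot \id$ with $\zeta = \tfrac13 \tr(\Curl P) \in \mathscr{D}'(\Omega)$; then applying the row-wise $\Div$ and using $\div\curl \equiv 0$ I obtain $\Div(\zeta\cdot\id) = \nabla\zeta \equiv 0$, so $\zeta$ reduces to a real constant $c$ on the connected set $\Omega$; finally, since $\Curl\Anti(x) = 2\,\id$, the field $P - \tfrac{c}{2}\Anti(x)$ is $\Curl$-free, and part (a) yields $P = \tfrac{c}{2}\Anti(x) + \D u$ with $\alpha = c/2$.

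For the ``only if'' direction of (c), starting from $\sym\Curl P \equiv 0$ I would write $\Curl P = \Anti(w)$ with $w = \axl(\Curl P) \in \mathscr{D}'(\Omega,\R^3)$; then row-wise $\Div \Curl P \equiv 0$ combined with the identity $\Div\Anti(w) = -\curl w$ from the paper's table yields $\curl w \equiv 0$; the distributional Poincar\'e lemma then furnishes $\psi \in \mathscr{D}'(\Omega)$ with $w = \nabla\psi$, so setting $\zeta := -\psi$ we get $\Curl P = \Anti(\nabla\psi) = -\Anti(\nabla\zeta) = \Curl(\zeta\cdot\id)$; a final application of (a) to $P - \zeta\cdot\id$ closes the argument.

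The only non-algebraic ingredients needed are the distributional Poincar\'e lemma on simply connected open subsets of $\R^3$ (invoked once on rows of $P$ in part (a) and once on $w$ in part (c)) and the elementary fact that a distribution with vanishing gradient on a connected open set is a constant; both are classical. There is no substantial obstacle; the only bookkeeping point worth flagging is that in (b) one must first upgrade $\zeta = \tfrac13\tr(\Curl P)$ to a real constant via $\Div\Curl P \equiv 0$ before subtracting the explicit correction $\tfrac{c}{2}\Anti(x)$, and in (c) one must be careful with the sign when passing from $w = \nabla\psi$ to $\zeta = -\psi$ so that $\Curl(\zeta\cdot\id)$ actually matches $\Curl P$.
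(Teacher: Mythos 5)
Your proposal is correct and follows essentially the same route as the paper: part (a) via the distributional Poincar\'e lemma, and parts (b) and (c) by writing $\Curl P$ as its spherical resp.\ skew-symmetric piece, using $\Div\Curl P\equiv 0$ to force that piece to be constant resp.\ a gradient, subtracting an explicit preimage ($\tfrac{c}{2}\Anti(x)$ resp.\ $\zeta\cdot\id$), and falling back on part (a). The only differences from the paper's argument are cosmetic normalizations --- your constant $c/2$ versus the paper's naming $\Curl P=2\alpha\cdot\id$ directly, and the sign bookkeeping $\zeta=-\psi$ that the paper builds into its ansatz $a=-\nabla\zeta$.
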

\begin{proof}
Part \ref{part_a} follows by the definition of the matrix $\Curl$ which acts row-wise on matrices and the fact that $\Omega$ is simply connected from the classical Poincar\'{e} lemma, cf.~e.g.~\cite[Theorem 6.17-2]{Ciarlet2013FAbook} but also the historical remarks therein.

Now, let ~ $\dev\Curl P \equiv 0 $ ~ then ~ $\Curl P = 2\,\alpha\cdot\id$ ~ with a scalar field $\alpha$. Since ~ $\Div \Curl P \equiv0$ ~ (in the sense of distributions) we get, taking the matrix $\Div$ on both sides, that
\begin{equation}
  2\,\nabla\alpha = 2\,\Div (\alpha \cdot \id)=\Div\Curl P \equiv0 \quad \Rightarrow \quad \alpha\equiv \operatorname{const}.
\end{equation}
Hence, ~ $\Curl(P-\alpha\cdot\Anti(x))= \Curl P -\alpha\cdot\Curl(\Anti(x))=\Curl P -2\,\alpha\cdot\id\equiv 0$, ~ so that there exists a vector field $u$ such that
\begin{equation}
 P -\alpha\cdot\Anti(x) = \D u.
\end{equation}
Conversely, we have ~ $\dev\Curl(\alpha\cdot\Anti(x)+\D u)=\alpha\cdot\dev\Curl(\Anti(x))=\alpha\cdot\dev(2\cdot\id)\equiv0$, so that part \ref{part_b} follows.

The conclusion of part \ref{part_c} is obtained in a similar way. Indeed, if ~ $\sym\Curl P \equiv 0$ ~ then ~ $\Curl P =\Anti(a)$ ~ for a vector field $a$. Taking the matrix $\Div$ on both sides we obtain
\begin{equation}
-\curl a =\Div \Anti(a) = \Div\Curl P\equiv 0.
\end{equation}
Hence, ($\Omega$ is a simply connected)
\begin{equation}
 a = -\nabla \zeta
\end{equation}
for a scalar field $\zeta$. Moreover, we have
\begin{equation}
 \Curl(\zeta\cdot\id)=-\Anti(\nabla\zeta)=\Anti(a)=\Curl P.
\end{equation}
Thus, ~ $\Curl(P-\zeta\cdot\id)\equiv0$ ~ and therefore there exists a vector field $u$ such that
\begin{equation}
 P - \zeta\cdot \id = \D u.
\end{equation}
Conversely, we have
\begin{equation}
 \sym\Curl(\zeta\cdot\id+\D u)=\sym\Curl(\zeta\cdot\id)=-\sym(\Anti(\nabla\zeta))\equiv0,
\end{equation}
which establishes part \ref{part_c}.
\end{proof}
\begin{remark}
 [Concerning the kernel of $\dev\sym\Curl$] It is clear that, $\Anti(\varphi_C)+\zeta\cdot\id+\D u$ belongs to the kernel of $\dev\sym\Curl$, denotig by $\varphi_C$ the infinitesimal conformal maps, cf.~\eqref{eq:infinitesimalconfis}. However, it is not clear wether these functions already represent the whole class.
\end{remark}

\subsection{The kernel of  $\operatorname{inc} \operatorname{sym}$, $\operatorname{inc} \operatorname{skew}$ and $\operatorname{inc}$}
Let $\Omega\subset\R^3$ be a bounded domain and $P\in\mathscr{D}'(\Omega,\R^{3\times 3})$. Then the Saint-Venant compatibility conditions give
\begin{equation}\label{eq:expression_incsym}
 \inc \sym P \equiv 0 \quad \Leftrightarrow \quad \sym P = \sym \D u,
\end{equation}
where $u\in\mathscr{D}'(\Omega,\R^3)$. Moreover, we have
\begin{equation}\label{eq:expression_incskew}
 \inc \skew P \equiv 0 \quad \Leftrightarrow \quad \skew P = \alpha\cdot\Anti(x) +\skew\D v
\end{equation}
where $\alpha\in\R$ and $v\in\mathscr{D}'(\Omega,\R^3)$. However, it would be desirable to obtain in \eqref{eq:expression_incskew} an expression as in \eqref{eq:expression_incsym} without an additional term in $\Anti(x)$. This can be achieved, e.g.,~assuming additional (boundary) conditions. In a first observation, for ~$\skew P = \skew \D v$~ we obtain ~$\axl\skew P = \axl\skew\D v = \frac12\curl v$,~ where ~$\curl v = 2\, \axl \skew P$~ always has a solution provided that ~$\div\axl\skew P \equiv0$.~ However, we will see, that ~$\inc(\skew P)\equiv0$~ implies only ~$\div\axl\skew P\equiv\operatorname{const}$.~ Indeed, to establish \eqref{eq:expression_incskew} we make use of the expression 
\begin{equation*}
 \inc \Anti(a) = -\Anti(\nabla\div a)
\end{equation*}
valid for all $a\in\mathscr{D}'(\Omega,\R^3)$. Thus,
\begin{equation*}
 \inc \Anti(a)\equiv0 \quad \Leftrightarrow \quad \nabla\div a\equiv 0  \quad \Leftrightarrow \quad \div a \equiv\operatorname{const}.
\end{equation*}
Therefore, there exists an $\alpha\in\R$ such that the vector field $a(x)-\alpha\cdot x$ is solenoidal, i.e.,~ $\div(a(x)-\alpha\cdot x)\equiv0$. ~ Hence, there exists a vector potential $v\in\mathscr{D}'(\Omega,\R^3)$ such that ~ $a(x)-\alpha\cdot x=\curl \frac{v(x)}{2}$. Since ~ $\Anti(\curl v) = 2\, \skew(\D v)$ ~ we obtained the expression from \eqref{eq:expression_incskew} where we have used $a = \axl\skew P$. Conversely, we have $\inc ( \alpha\cdot\Anti(x) +\skew\D v)\equiv 0$, which establishes the relation from \eqref{eq:expression_incskew}.

Furthermore,
\begin{align}
 \inc P \equiv 0 \quad &\Leftrightarrow\quad \sym \inc P \equiv 0 \ \ \wedge \ \ \skew\inc P \equiv 0 \quad \Leftrightarrow\quad \inc\sym P \equiv 0 \ \ \wedge \ \ \inc\skew P \equiv 0 \notag\\
 &\underset{\mathclap{\eqref{eq:expression_incskew}}}{\overset{\mathclap{\eqref{eq:expression_incsym}}}{\Leftrightarrow}} \quad \sym P = \sym \D u \ \ \wedge \ \ \skew P= \alpha\cdot\Anti(x) +\skew\D v\notag \\
 & \Leftrightarrow \quad P = \alpha\cdot\Anti(x) +\sym \D u + \skew\D v 
\end{align}
where  $u,v\in\mathscr{D}'(\Omega,\R^3)$ and $\alpha\in\R$.

\subsection{Rotations and the cross product}
By definition of $\Anti$ it holds $\Anti(a)\,b=a\times b$. Thus, for any rotation $R\in\SO(3)$ we have
\begin{equation}
\Anti(R\,a)\,R\,b=(R\,a)\times(R\,b) = R(a\times b) = R\Anti(a)\,b \qquad \forall a, b \in\R^3
\end{equation}
and therefore
\begin{equation}
 \Anti(R\,a)\,R = R\Anti(a) \quad\Rightarrow \quad \Anti(R\,a) = R\Anti(a)R^T.
\end{equation}
It follows that
\begin{subequations}
\begin{align}
 \Anti(R\,a)\Anti(R\,b)&= R\Anti(a)\Anti(b)R^T
\shortintertext{but also}
 \dev(\Anti(R\,a)\Anti(R\,b))&= R[\dev(\Anti(a)\Anti(b))]R^T,\\ \sym(\Anti(R\,a)\Anti(R\,b))&= R[\sym(\Anti(a)\Anti(b))]R^T,\\
 \dev\sym(\Anti(R\,a)\Anti(R\,b))&= R[\dev\sym(\Anti(a)\Anti(b))]R^T,\\
 \tr(\Anti(R\,a)\Anti(R\,b))&= \tr(\Anti(a)\Anti(b))\,.
 \end{align}
 \end{subequations}
Consequently, we have
\begin{subequations}
\begin{align}
\norm{\Anti(R\,a)\Anti(R\,b)}&=\norm{R\Anti(a)\Anti(b)R^T}=\norm{\Anti(a)\Anti(b)}
\intertext{and, due to the isotropy of $\dev$, $\sym$ and $\dev\sym$, also}
 \norm{\dev(\Anti(R\,a)\Anti(R\,b))}&= \norm{\dev(\Anti(a)\Anti(b))},\\ \norm{\sym(\Anti(R\,a)\Anti(R\,b))}&= \norm{\sym(\Anti(a)\Anti(b))},\\
 \norm{\dev\sym(\Anti(R\,a)\Anti(R\,b))}&= \norm{\dev\sym(\Anti(a)\Anti(b))}\,.
\end{align}
 \end{subequations}
Since $\Anti(\alpha\, a)=\alpha\Anti(a)$ for all $\alpha\in\R$ and in regard with the invariance relations above and the quadratic homogeneity, the considerations in section \ref{sec:3D} can also be obtained with a fixed skew-symmetric matrix, say for $a=e_3$ or, equivalently $A=\Anti(e_3)=e_2\otimes e_1-e_1\otimes e_2$. We demonstrate it in re-proving Observation \ref{obs:2}, i.e.
\begin{equation}\label{eq:normequiv_appendix}
\boxed{
 \frac12\norm{a}^2\norm{b}^2\le\norm{\dev\sym(\Anti(a)\times b)}^2\le\frac23\norm{a}^2\norm{b}^2 \quad \forall a, b\in\R^3.
 }
\end{equation}
By the previous discussion it suffices to establish this estimates already for fixed $a=e_3$. Indeed, we have
\begin{equation}
 \Anti(e_3)\times b = \begin{pmatrix} 0 & -1 & 0 \\ 1 & 0 & 0 \\ 0 & 0 & 0 \end{pmatrix} \begin{pmatrix}0 & -b_3 & b_2 \\ b_3 & 0 & -b_1 \\ -b_2 & b_1 & 0\end{pmatrix} = \begin{pmatrix}-b_3 & 0 & b_1 \\ 0 & -b_3 & b_2\\ 0 & 0 & 0\end{pmatrix}
\end{equation}
Hence,
\begin{equation}
 \norm{\dev\sym(\Anti(e_3)\times b)}^2 = \frac23 b_3^2 +\frac12 b_1^2+ \frac12 b_2^2
\end{equation}
and therefore
\begin{equation}
 \frac12\norm{b}^2\le\norm{\dev\sym(\Anti(e_3)\times b)}^2\le\frac23\norm{b}^2 \,.
\end{equation}
From the estimate \eqref{eq:normequiv_appendix} it follows that for fixed $b\neq0$ the linear map $a\mapsto\dev\sym(\Anti(a)\times b)$ is invertible. A specific inverse map can be obtained as follows. Set
\begin{equation*}
 M\coloneqq \dev\sym(\Anti(a)\times b) \overset{\eqref{eq:iii}}{=} \frac12(a\otimes b + b \otimes a) - \frac13\skalarProd{a}{b}\cdot\id.
\end{equation*}
Thus,
\begin{equation*}
 \tr M = 0, \quad M\,b=\frac12\norm{b}^2a+\frac16\skalarProd{a}{b}\,b, \quad \text{and}\quad \skalarProd{Mb}{b}=\frac23\skalarProd{a}{b}\norm{b}^2
\end{equation*}
and we have
\begin{equation}\label{eq:inversemap}
 a= L_b\,M, \quad \text{where }\quad L_b\,M\coloneqq \frac{2}{\norm{b}^2}\left(M\,b - \frac14\frac{\skalarProd{Mb}{b}}{\norm{b}^2}\, b\right).
\end{equation}

\corrected{
\subsection{Fourier transformation and equivalence of spaces}\label{App:Fourier}
The Fourier transform of $f$ on $\R^3$ is given by
\begin{equation}
 \mathcal{F}f(\xi)=\widehat{f}(\xi)=(2\pi)^{-\frac32}\int_{\R^3} \mathrm{e}^{-\mathrm{i}\langle\xi,x\rangle}f(x)\,\intd{x}, \quad \xi\in\R^3\,.
\end{equation}
If $f$ is sufficiently regular, then $\widehat{\partial_j f}(\xi)= \mathrm{i}\,\xi_j \widehat{f}(\xi)$. Thus, for a sufficiently regular vector field $v:\R^3\to\R^3$ we have
\begin{equation}
 \widehat{\curl v} (\xi)= \mathrm{i}\,\xi\times \widehat{v}(\xi) = \mathrm{i}\,\widehat{v}( \xi)\times (-\xi)
\end{equation}
and for a sufficiently regular matrix field $P:\R^3\to\R^{3\times3}$ it follows
\begin{equation}
 \widehat{\Curl P}(\xi)= -\mathrm{i}\,\widehat{P}(\xi)\times \xi=-\mathrm{i}\,\widehat{P}(\xi)\Anti(\xi).
\end{equation}
Consequently,
\begin{equation}\label{eq:fouriers}
\mathcal{F}\sym\Curl P(\xi)=-\mathrm{i}\,\sym(\widehat{P}(\xi)\times \xi) \quad \text{and}\quad \mathcal{F}\dev\sym\Curl P(\xi)=-\mathrm{i}\,\dev\sym(\widehat{P}(\xi)\times \xi)\,.
\end{equation}
Recall, that by \eqref{eq:normequiv-top} we have the estimate
\begin{equation}
\forall \xi\in\R^3 : \quad
 \norm{\dev\sym (P\times \xi)}\le \norm{\sym(P\times \xi)} \le (1+\sqrt{3})\norm{\dev\sym(P\times \xi)}\,
\end{equation}
which in regard with \eqref{eq:fouriers} gives
\begin{equation}
 c\,\norm{\mathcal{F}\dev\sym \Curl P}_{L^2(\R^3)}\le \norm{\mathcal{F}\sym\Curl P}_{L^2(\R^3)} \le C\,\norm{\mathcal{F}\dev\sym\Curl P}_{L^2(\R^3)}\,.
\end{equation}
Since by Plancherel's theorem the Fourier transformation is an isometry of spaces, i.e.~the $L^2$-norm satisfies $\norm{f}_{L^2(\R^3)}=\norm{\widehat{f}}_{L^2(\R^3)}$ we conclude
\begin{equation}
  c\,\norm{\dev\sym \Curl P}_{L^2(\R^3,\R^{3\times3})}\le \norm{\sym\Curl P}_{L^2(\R^3,\R^{3\times3})} \le C\,\norm{\dev\sym\Curl P}_{L^2(\R^3,\R^{3\times3})}\,.
\end{equation}
In other words ~ $\dev\sym \Curl P\in L^2(\R^3,\R^{3\times3})$ ~ if and only if ~ $\sym \Curl P\in L^2(\R^3,\R^{3\times3})$, ~ thus, establishing the equivalence of spaces without boundary conditions
$$W^{1,2}(\dev\sym\Curl;\R^3,\R^{3\times3})=W^{1,2}(\sym\Curl;\R^3,\R^{3\times3}),$$
as well as the norm equivalence
\begin{equation*}
  \norm{P}_{L^2(\R^3,\R^{3\times3})}+ \norm{\sym\Curl P}_{L^2(\R^3,\R^{3\times3})} \le C(\norm{P}_{L^2(\R^3,\R^{3\times3})}+\norm{\dev\sym\Curl P}_{L^2(\R^3,\R^{3\times3})})\,.
\end{equation*}
Hence, we conclude
\begin{align*}
 \widehat{\skew P}(\xi) &\overset{\eqref{eq:inversemap}}{=} \Anti\left(L_{\xi/\norm{\xi}}\left[\dev\sym\left((\widehat{\skew P}(\xi))\times\frac{\xi}{\norm{\xi}}\right)\right] \right) = \Anti\left(L_{\xi/\norm{\xi}}\left[\dev\sym\left((\skew\widehat{P}(\xi))\times\frac{\xi}{\norm{\xi}}\right)\right] \right)\\
 &\ \ = \ \ \Anti\left(L_{\xi/\norm{\xi}}\left[\dev\sym\left(((\widehat{P}-\sym\widehat{P})(\xi))\times\frac{\xi}{\norm{\xi}}\right)\right] \right) \\
 & \overset{\eqref{eq:fouriers}}{=}  \Anti\left(L_{\xi/\norm{\xi}}\left[\frac{-\mathrm{i}}{\norm{\xi}}\mathcal{F}\dev\sym\Curl P(\xi)\right] - L_{\xi/\norm{\xi}}\left[\dev\sym\left((\widehat{\sym P}(\xi))\times\frac{\xi}{\norm{\xi}}\right)\right]\right).
\end{align*}
Thus, standard multiplier estimates will give $L^p$ estimates (for $1<p<\infty$) in the periodic setting and in the whole space $\R^3$.
}

\end{alphasection}

\end{document}